\documentclass[12pt,reqno]{amsart}
\usepackage[numbers,sort&compress]{natbib}
\usepackage{amsfonts,bbm}
\usepackage{amssymb,color}
\usepackage{amssymb}
\usepackage{fancyhdr}
\usepackage[titletoc]{appendix}
\usepackage{enumitem}
\usepackage{amsgen}
\usepackage{amscd}
\usepackage{amsmath}
\usepackage{mathrsfs}
\usepackage{cases}
\usepackage{accents} 

\usepackage[colorlinks=true,backref]{hyperref}
\hypersetup{urlcolor=blue, citecolor=green, linkcolor=blue}

\usepackage[margin=3cm, a4paper]{geometry}


\setlength{\parindent}{2em}
\parindent = 10 pt
\parskip = 0 pt

\newtheorem{thm}{Theorem}[section]
\newtheorem{cor}[thm]{Corollary}
\newtheorem{lem}[thm]{Lemma}
\newtheorem{prop}[thm]{Proposition}
\newtheorem{defn}[thm]{ \bf{Definition}}

\newtheorem{redu}[thm]{Reduction}
\theoremstyle{remark}
\newtheorem{remark}[thm]{Remark}
\newtheorem*{prob}{\textbf{Problem}}

\newcommand{\EQ}[1]{\begin{align*}\begin{split} #1 \end{split}\end{align*}}
\newcommand{\EQn}[1]{\begin{align}\begin{split} #1 \end{split}\end{align}}

\newcommand{\EQnnsub}[1]{\begin{subequations}\begin{align} #1 \end{align}\end{subequations}}

\newcommand{\enu}[1]{\begin{enumerate} #1 \end{enumerate}}

\setlength{\marginparwidth}{2cm}

\newcommand{\Del}[1]{}


\def\norm#1{\left\|#1\right\|}

\def\normb#1{\big\|#1\big\|}

\def\normB#1{\Big\|#1\Big\|}

\def\abs#1{\left|#1\right|}

\def\absb#1{\big|#1\big|}

\def\brko#1{(#1)}
\def\brkb#1{\big(#1\big)}

\def\fbrk#1{\left\lbrace#1\right\rbrace}

\def\fbrkb#1{\big\lbrace#1\big\rbrace}
\def\fbrkbb#1{\bigg\lbrace#1\bigg\rbrace}

\def\jb#1{\langle#1\rangle}

\def\wt#1{\widetilde{#1}}
\def\wh#1{\widehat{#1}}
\def\wb#1{\overline{#1}}
\def\wtt#1{\accentset{\approx}{#1}} 


\def\pd{\partial}

\newcommand{\ra}{{\rightarrow}}
\newcommand{\hra}{{\hookrightarrow}}

\def\loe{\le}
\def\goe{\ge}
\def\lsm{\lesssim}
\def\gsm{\gtrsim}

\newcommand{\N}{{\mathbb N}}

\newcommand{\R}{{\mathbb R}}
\newcommand{\C}{{\mathbb C}}
\newcommand{\Z}{{\mathbb Z}}

\newcommand{\F}{{\mathcal{F}}}

\newcommand{\M}{{\mathcal{M}}}

\newcommand{\E}{{\mathcal{E}}}
\newcommand{\ZZ}{{\mathcal{Z}}}
\newcommand{\U}{{\mathcal{U}}}
\newcommand{\V}{{\mathcal{V}}}
\newcommand{\W}{{\mathcal{W}}}
\newcommand{\dd}{{\mathrm{d}}}

\newcommand{\TT}{{\mathcal{T}}}


\def\dx{\mathrm{\ d} x}
\def\dy{\mathrm{\ d} y}
\def\ds{\mathrm{\ d} s}

\newcommand{\re}{{\mathrm{Re}}}
\newcommand{\im}{{\mathrm{Im}}}

\def\ep{\varepsilon}
\def\al{\alpha}
\def\be{\beta}

\def\ph{\varphi}
\def\th{\theta}

\def\de{\delta}
\def\De{\Delta}

\def\ta{\tau}
\def\la{\lambda}
\def\ga{\gamma}

\newcommand{\I}{\infty}
\def\rev#1{\frac{1}{#1}}
\def\half#1{\frac{#1}{2}}



\numberwithin{equation}{section}


\begin{document}
\title[NLS]{Global well-posedness for the defocusing 3D quadratic NLS in the sharp critical space}

\subjclass[2010]{}
\keywords{}

\author{Jia Shen}
\address{(J. Shen) School of Mathematical Sciences\\
	Nankai University\\
	Tianjin 300071, China}
\email{shenjia@nankai.edu.cn}

\author{Yifei Wu}
\address{(Y.Wu) School of Mathematical Sciences\\
Nanjing Normal  University\\
Nanjing 210046, China}
\email{yerfmath@gmail.com}
\thanks{}

\date{}

\begin{abstract}\noindent
In this paper, we prove the global well-posedness of defocusing 3D quadratic nonlinear Schr\"odinger equation
\EQ{
i\pd_t u + \frac12\De u = |u| u,
}
in its sharp critical weighted space $\F \dot H_x^{1/2}$ for radial data. 

Killip, Masaki, Murphy, and Visan \cite{KMMV17NoDEA} have proved its global well-posedness and scattering, if the $\F \dot H_x^{1/2}$-norm of the solution is bounded in the maximal lifespan. Now, we remove this a priori assumption for the global well-posedness statement in the radial case. 

Our method is based on the almost conservation of pseudo conformal energy. 
This energy scales like $\dot H_x^{-1}$, which is supercritical. We are still able to derive the global well-posedness using this monotone quantity. 
The main observation is that we can establish the local solution in supercritical weighted space when the initial time is away from the origin.
\end{abstract}

\maketitle

\tableofcontents

\section{Introduction}
We consider the Cauchy problem for the defocusing nonlinear Schr\"odinger equations (NLS):
\EQn{
\label{eq:nls}
\left\{ \aligned
&i\pd_t u + \frac12\De u = |u|^p u, \\
& u(0,x) = u_0(x),
\endaligned
\right.
}
with $p>0$ and $u(t,x):\R\times\R^d\ra\C$. 

The equation \eqref{eq:nls} has conserved mass
\EQn{\label{NLS:mass}
	M(u(t)) :=\int_{\R^d} \abs{u(t,x)}^2 \dx=M(u_0),
}
 energy
\EQn{\label{NLS:Energy}
	E(u(t)) := \frac14\int_{\R^d}\abs{\nabla u(t,x)}^2 \dx + \rev{p+2} \int_{\R^d}  \abs{u(t,x)}^{p+2} \dx=E(u_0).
}
and the pseudo conformal energy
\EQn{\label{NLS:Ps-Energy}
	P(u(t)) :=& \int_{\R^d} \abs{(x+it\nabla) u(t,x)}^2 \dd x + \frac{8t^2}{p+2} \int_{\R^d} \abs{u(t,x)}^{p+2} \dd x\\
	&\quad +\frac{dp-4}{p+2}\int_0^t\!\!s\int_{\R^d}\!\!|u(s,x)|^{p+2}\,\dd x\dd s
	=P(u_0).
}

The solution to equation (\ref{eq:nls}) is invariant under the scaling transform:
\EQ{
	u(t,x)\to u_\lambda(t,x) = \lambda^{\frac{2}{p}} u(\lambda^2 t, \lambda x) \ \ {\rm for}\ \ \lambda>0,
}
which maps the initial data as $u(0)\to u_{\lambda}(0):=\lambda^{\frac{2}{p}} u_0(\lambda x)$ for $\la>0$. Denote the scaling critical exponent
$$
s_c:=\frac d2-\frac{2}{p},
$$
then the scaling transform leaves  $\dot{H}_x^{s_{c}}$-norm invariant, that is $\|u(0)\|_{\dot H_x^{s_{c}}}=\|u_{\lambda}(0)\|_{\dot H_x^{s_{c}}}$. This Sobolev space is a nature choice to study the well-posedness since it is related to the mass and the energy conservation laws. Due to this, we usually call the equation (\ref{eq:nls}) is mass-critical when $s_c=0$ and energy-critical when $s_c=1$. 

Moreover,  the scaling transform also leaves  $\F \dot{H}_x^{-s_{c}}$-norm invariant, that is $\|u(0)\|_{\F\dot H_x^{-s_{c}}}=\|u_{\lambda}(0)\|_{\F\dot H_x^{-s_{c}}}$. Here, $\F \dot H_x^{s}(\R^d):= L_x^2(\R^d;|x|^{2s}\dd x)$ denotes the homogeneous weighted Sobolev space. In particular, when studying the mass-subcritical equation ($s_c<0$), the space $\F \dot{H}_x^{-s_{c}}$ becomes important and a natural choice,  since in this case the power of the weight is positive and thus  related to the pseudo conformal energy. One may see  \cite{KMMV17NoDEA} for more extension introduction.

The scaling critical regularity plays an important role in the study of well-posedness theory. It is well-known that the local well-posedness holds in $\dot H_x^{s}$ with $s\ge s_c$ (\cite{CW89remarks}), while there exists some ill-posedness in $\dot H_x^{s}$ when $s<s_c$ (\cite{CCT03illposed}). One of the central questions of the study of dispersive equations is that:
\begin{prob}
If the defocusing equation is globally well-posed in its critical space?
\end{prob}

The problem is related to the well-known Bourgain's conjecture in \cite{Bou00GAFA}: the defocusing NLS is globally well-posed in the inhomogeneous space $H_x^{s_c}$ when $s_c\ge 0$ and $s_c\ne 0, 1$.
As described  above, the conjecture can be naturally extended to the space $\F \dot{H}_x^{-s_{c}}$ for the mass subcritical equation.

This topic is extensively studied when the critical regularity is related to the conservation law, namely the energy critical and the mass critical cases, see \cite{Bou99JAMS,CKSTT08Annals,Dod12JAMS,Dod16AJM,Dod16Duke,KTV09JEMS,KVZ08APDE,TVZ07Duke,RV07AJM,Vis07Duke,Vis11IMRN,KV12APDE}. Now, we focus on the case when the critical regularity is not directly linked to certain conservation law, that is when $s_c\ne 0,1$. The problem becomes much harder since there is no natural a priori bound of the solution in critical space. 

A typical model of such case for NLS is the 3D cubic equation, which is $\dot H_x^{1/2}$-critical. This model is known to be globally well-posed in $H_x^s$ for some $s>1/2$, see \cite{Bou98IMRN,CKSTT02MRL,CKSTT04CPAM,Su12MRL,Dod19CJM}, and the global well-posedness in $H_x^{1/2}$ is still open. 

For the global problem in critical space, Kenig and Merle \cite{KM10TranAMS} started the study of ``conditional global well-posedness and scattering" in the case of 3D cubic NLS: if the solution is uniformly bounded in critical space within the maximal lifespan, then the equation is globally well-posed and scatters. The subject is then widely studied in the field of dispersive equations, and the related result for 3D quadratic NLS was given by Killip, Masaki, Murphy, and Visan \cite{KMMV17NoDEA}.

It is interesting to remove such a priori assumption. For mass subcritical NLS, Beceanu, Deng, Soffer, and the second author \cite{BDSW18sub-critical} gave the first global well-posedness in $\dot H_x^{s_c}$, additionally assuming that the initial data is radial and compact supported. As for the NLS with $s_c>0$ and $s_c\ne 1$, even the global well-posedness in inhomogeneous space $H_x^{s_c}$ is still open. Moreover, there are global results in the critical space based on the Lebesgue space $L_x^p$ with $p<2$ by Dodson \cite{Dodson20critical,Dod23IMRN} for the 3D cubic NLS.

In the context of nonlinear wave equations, Dodson \cite{Dod21Duke} first made an important progress in this field, giving the global well-posedness and scattering of 3D cubic equations in the sharp critical space $\dot H_x^{1/2}(\R^3)$ with radial assumption, see \cite{Dod24AJM,Dod23NLWconformal,Dod23NLWsubcritical} for further studies.

Unexpectedly, for defocusing energy supercritical NLS (that is $s_c>1$), Merle, Raphael, Rodnianski, and Szeftel \cite{MRRS22annals,MRRS22invent} constructed smooth blow-up solutions, which denied the assertion and Bourgain's conjecture in \cite{Bou00GAFA}. This makes the related problem more interesting.

Nakanishi and Ozawa \cite{NO02NoDEA} observed that there exists some symmetry between the mass supercritical NLS in $H_x^s$ and the mass subcritical ones in $\F H_x^s$. This may suggest that there are some similar difficulties between the well-known problem of the global well-posedness of 3D cubic NLS in $\dot H_x^{1/2}$ and the related problem for 3D quadratic NLS in $\F\dot H_x^{1/2}$. Moreover, the mass subcritical NLS is known to be globally well-posed in $L_x^2$ (\cite{Tsu87Funk}), whether the equation is defocusing or focusing, while the result in the homogeneous space $\F \dot H_x^{-s_c}$ remains unsolved. This motivates us to study the 3D quadratic NLS in $\F \dot H_x^{1/2}$.

The 3D quadratic NLS is also of particular interest, since the exponent of nonlinearity equals the Strauss exponent $\ga(d):=\frac{2-d+\sqrt{d^2+12d+4}}{2d}$ in 3D. 

In this paper, we give an affirmative answer to the above mentioned question, in the case of defocusing 3D quadratic NLS with radial $\F \dot H^{1/2}$-data. 
The main result is as follows.
\begin{thm}\label{thm:3dquadratic}
Let $d=3$ and $p=1$. Suppose that $u_0\in\F \dot H_x^{1/2}(\R^3)$ and $u_0$ is radial. Then, the equation \eqref{eq:nls} admits a unique solution $u$ such that $e^{-\frac12it\De}u\in C(\R;\F \dot H_x^{1/2}(\R^3))$. Moreover, the solution has additional weighted property: for any $\frac12\le s\le 1$,
\EQ{
e^{-\frac12it\De}u - u_0 \in C(\R;\F \dot H_x^{s}(\R^3)).
}
\end{thm}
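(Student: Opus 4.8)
The plan is to combine a concentration-compactness / contradiction scheme with the almost-monotonicity of the pseudo conformal energy $P(u)$ advertised in the abstract. The starting point is the conditional result of Killip--Masaki--Murphy--Visan~\cite{KMMV17NoDEA}: it suffices to prove the a priori bound $\sup_{t\in I}\|u(t)\|_{\F\dot H_x^{1/2}}<\infty$ on the maximal interval of existence $I$ of the radial solution with data $u_0\in\F\dot H_x^{1/2}$. Equivalently, passing through the pseudo-conformal transform, the weighted $\dot H^{1/2}$ bound translates into an $\dot H^{1/2}$ bound for the transformed solution, and one expects to run the argument directly on the transformed flow. First I would set up the local theory in the weighted spaces $\F\dot H^s_x$ for $1/2\le s\le 1$, noting that the persistence-of-regularity statement in the theorem (that $e^{-\frac12 it\Delta}u-u_0$ gains weight, i.e.\ lies in $C(\R;\F\dot H^s_x)$ for all $s\in[1/2,1]$) is exactly the kind of smoothing one gets from Duhamel once the critical $\F\dot H^{1/2}$ norm is controlled: the linear evolution keeps the data in $\F\dot H^{1/2}$ while the Duhamel term, built from the Strichartz-controlled nonlinearity, is more regular. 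So the theorem splits into (i) global control of the critical norm, and (ii) a bootstrap for the extra weighted regularity, with (i) being the crux.

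For step (i), the key new idea flagged in the abstract is that although $P(u)$ scales like $\dot H^{-1}$ — supercritical, hence not directly bounding the critical norm — it is an almost-monotone quantity (in the defocusing case the bulk term $\frac{dp-4}{p+2}\int_0^t s\int|u|^{p+2}$ has a favorable sign when $dp-4\ge 0$, which for $d=3,p=1$ gives coefficient $-1/3<0$, so one must be careful — more precisely $P$ is \emph{decreasing} and one uses the resulting spacetime bound $\int_0^\infty s\int|u|^{p+2}\,dx\,ds\lesssim P(u_0)$). The strategy is: run a contradiction/induction-on-energy argument à la Kenig--Merle; if the critical norm were to blow up, extract a minimal blow-up solution; use the spacetime bound from $P$ together with the radial Sobolev/Strauss-type inequalities to derive additional compactness and ultimately a contradiction (e.g.\ with the conservation of mass, or via a virial/Morawetz rigidity argument showing the critical element must be zero). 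Crucially, one exploits that for initial \emph{times away from the origin} one can solve the equation in the supercritical weighted space $\F\dot H^s_x$ with $s<1/2$ as well — the weight $|x|^{-s}$ with $s$ in a window is integrable against the profile because $t\ne 0$ shifts the relevant frequencies — which is the mechanism the authors call the ``main observation''. This lets one propagate control backward/forward from a good time slice even though the quantity itself is supercritical.

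Concretely the steps are: (1) record the weighted Strichartz and local well-posedness lemmas in $\F\dot H^s_x$, $s\in[1/2,1]$, including the smoothing Duhamel estimate for part (ii); (2) prove the almost-conservation law $\frac{d}{dt}P(u(t))\le 0$ (modulo the radial-data manipulations needed to make the boundary terms sensible for merely $\F\dot H^{1/2}$ data — likely via approximation by smooth compactly supported radial data and a limiting argument), extracting $\iint s|u|^{p+2}\lesssim P(u_0)<\infty$; (3) the ``local solution away from the origin'' lemma: for $t_0\ne0$ and data in $\F\dot H^s_x$ with $s$ slightly below $1/2$, solve locally with a lifespan depending only on a weighted norm, using the dispersive decay of $e^{-\frac12 it\Delta}$ to absorb the singular weight; (4) assemble: suppose $T^*<\infty$ (or that the critical norm is unbounded); use (2) to find a sequence of times $t_n\uparrow T^*$ along which $\|u(t_n)\|_{\F\dot H^{s}_x}$ is controlled for some supercritical $s$, then use (3) to extend the solution past $T^*$ (or to get uniform critical control), a contradiction; (5) feed the resulting global critical bound into~\cite{KMMV17NoDEA} for GWP, and into the Duhamel smoothing of step (1) for the weighted persistence claim in~$[1/2,1]$.

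The main obstacle I expect is step (4)/(3): reconciling the supercriticality of $P$ with the subcriticality one needs for a genuine a priori bound — i.e.\ converting the spacetime integral $\iint s|u|^{p+2}\lesssim P(u_0)$ (which only pins down the solution at \emph{some} good times, with an $\F\dot H^s$ bound for $s<1/2$ rather than at the critical regularity) into control that closes the continuity argument. This requires the delicate ``away from the origin'' local theory to have a lifespan that does not shrink to zero as one approaches the putative blow-up time, which in turn needs the radial assumption (to use pointwise/weighted Strauss estimates) and a careful choice of the exponent $s$ in the window below $1/2$ so that the weighted nonlinear estimate closes. The secondary technical nuisance is making $P(u(t))$ and its monotonicity rigorous for data only in $\F\dot H^{1/2}$, which I would handle by a standard density-and-stability argument once the local theory of step (1) is in place.
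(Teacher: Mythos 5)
Your proposal diverges substantially from the paper's argument, and the route you sketch would not close as written. Let me list the main points of departure and the genuine gaps.

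First, the paper does \emph{not} feed into the conditional result of Killip--Masaki--Murphy--Visan \cite{KMMV17NoDEA}, and it cannot: the proof does not establish $\sup_{t\in I_{\max}}\|u(t)\|_{\F\dot H_x^{1/2}}<\infty$. After the pseudo conformal transform, the bounds on $[t_0,T_0]$ depend on $t_0$ (hence on $T=1/t_0$ in the original time variable), and they degenerate as $t_0\to 0$. This is consistent with the fact that the theorem claims global well-posedness but \emph{not} scattering, whereas \cite{KMMV17NoDEA} would deliver both. The paper instead constructs the global solution directly: local theory at $\dot H^{1/2}$ near $t=+\infty$ (after the transform), then a local theory in $\dot H^{1/2}\cap\dot H^1$ whose lifespan depends only on the $\dot H^{1/2}\cap\dot H^1$ norm of the data, combined with a priori $\dot H^{1/2}\cap\dot H^1$ control from the modified energy to continue the solution backward to $t_0$ in uniformly sized steps. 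No concentration-compactness, no Kenig--Merle profile decomposition, no rigidity step appears anywhere.

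Second, and more fundamentally, the paper does not use the quantity $P(u)$ from the introduction as the monotone functional. For $u_0\in\F\dot H_x^{1/2}$ generic, $P(u_0)=\|xu_0\|_{L^2}^2+\cdots$ is simply infinite (it scales at the $\F\dot H^1$, i.e.\ $\dot H^{-1}$-supercritical level), and no density/approximation argument can repair that: it is a divergent, not merely ill-defined, quantity. The device that makes the argument work is a \emph{high-low frequency decomposition} of the transformed final data, $\U_+=\V_++\W_+$, where $\V_+:=\F^{-1}\overline{\chi_{\ge N_0}u_0}$ is chosen small in $\dot H^{1/2}$ and $\W_+$ is compactly supported hence in $\dot H^1$ with norm $\lesssim N_0^{1/2}$. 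The monotone functional is the modified pseudo conformal energy of the nonlinear remainder only,
\[
\E(t)=\frac14 t^{1/2}\int|\nabla\W(t)|^2\,dx+\frac13\int|\W(t)|^3\,dx,
\]
which is finite precisely because of the frequency cutoff, and its almost conservation $\sup_t\E(t)\lesssim N_0$ is proved by a direct bootstrap exploiting local smoothing, the radial Sobolev inequality, and atom-space $U^2_\De$ spacetime estimates. This two-component structure (small $\dot H^{1/2}$ linear part plus higher-regularity nonlinear remainder) is the heart of the argument and has no counterpart in your sketch.

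Third, you have the scaling direction reversed: the supercritical weighted spaces (in which the paper's ``local well-posedness away from $t=0$'' Proposition~\ref{thm:lwp} is remarkable) are $\F\dot H^s_x$ with $s>1/2$, not $s<1/2$. The pseudo conformal energy lives at $\F\dot H^1$, which corresponds to $\dot H^{-1}$, one-half derivative \emph{below} critical. Relatedly, that proposition is given as an appendix-level side result illustrating the general principle; the local theory actually deployed in the proof is for the transformed final-data problem in $\dot H^{1/2}$ and $\dot H^{1/2}\cap\dot H^1$ (Propositions 3.2, 3.5, 3.8 in the paper), not the supercritical $\F\dot H^s$ spaces.

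Finally, even setting aside the finiteness issue with $P(u_0)$, the step in your plan that converts the spacetime bound $\iint s|u|^{p+2}\lesssim P(u_0)$ into a uniform critical norm bound via a rigidity/compactness argument is entirely unsupplied, and I do not see how it would close: such an $L^3_{t,x}$-type bound gives no control at the $\dot H^{1/2}$ (or $\F\dot H^{1/2}$) level. The paper deliberately avoids this route. Your reading of the abstract-level ideas is reasonable, and the observation that the away-from-origin local theory is essential is correct, but the actual mechanism (high-low decomposition plus modified energy of the remainder plus a stepwise continuation argument) is different in kind from the Kenig--Merle scheme you propose, and the gap between your plan and a complete proof is the entire technical core of the paper.
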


\begin{remark}
Tsutsumi \cite{Tsu87Funk} has proved the global well-posedness for mass subcritical NLS in $L^2$, based on the mass conservation. However, there is no conservation law naturally in the critical space. Thus, one of our main tasks is to establish suitable monotone quantity. The method is based on the almost conservation of pseudo conformal energy, which is widely used in the previous study for scattering theory, see for example \cite{TY84BAMS,CW92CMP,GOV94poincare,SW23subNLS}, while our result seems the first time when this conservation law is applied to the proof of global well-posedness. 
\end{remark}

Moreover, the pseudo conformal energy scales like $\dot H^{-1}$, which is supercritical about the 3D quadratic NLS. One of our main observations is that we can solve the NLS in supercritical weighted space when $t$ is away from the origin. We give a precise statement in the case of 3D quadratic equation, and this principle can be surely applied to more general $d$, $p$, and $s$. 
\begin{prop}[Local well-posedness away from $t=0$] \label{thm:lwp}
Let $d=3$, $p=1$, and $0\le s\le 1$. Suppose that $e^{-\frac12it_0\De}u(t_0)\in\F \dot H_x^{s}(\R^3)$ and $t_0\ne0$. Then, there exists $I\subset \R$ such that $t_0\in I$, $0\notin I$, $|I|$ depends on $\normb{e^{-\frac12it_0\De}u(t_0)}_{\F \dot H_x^{s}(\R^3)}$, and the equation \eqref{eq:nls} admits a unique solution $u$ on $I$ such that $e^{-\frac12it\De}u(t)\in C(I;\F \dot H_x^{s}(\R^3))$.
\end{prop}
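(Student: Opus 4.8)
The plan is to run a contraction mapping argument for the Duhamel formulation, but in the pseudo-conformally transformed frame, where the "$t$ away from $0$" condition converts the supercritical weighted space $\F\dot H_x^s$ into the subcritical Sobolev space $\dot H_x^s$. Recall the lens (pseudo-conformal) transform: if $v$ solves the NLS, then
\EQ{
u(t,x) = (\mathcal{T}v)(t,x) := (1+t^2)^{-d/4}\, e^{\frac{i|x|^2 t}{2(1+t^2)}}\, v\!\left(\arctan t,\tfrac{x}{\sqrt{1+t^2}}\right)
}
(or the corresponding variant adapted to $t_0\ne 0$), maps solutions of \eqref{eq:nls} to solutions of an NLS with a time-dependent factor $(\cos\tau)^{d p/2 - 2}$ in front of the nonlinearity; crucially, it intertwines the operator $x + it\nabla$ with a pure derivative, so that $\|u(t)\|_{\F\dot H_x^s}$ at a fixed $t$ becomes $\|v(\tau)\|_{\dot H_x^s}$ up to harmless powers of $\cos\tau$. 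Since $t_0\ne 0$, the corresponding $\tau_0 = \arctan t_0 \ne 0$ stays a fixed distance from the endpoints where the weight degenerates, so on a short interval around $\tau_0$ the transformed equation is a standard subcritical ($\dot H^s$ with $s>s_c = 1/2$) problem with bounded, smooth coefficients.

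First I would set up the precise change of variables and record the norm equivalence: $e^{-\frac12 i t\Delta}u(t)\in\F\dot H_x^s$ is equivalent (for $t$ in a fixed compact set avoiding $0$) to $v(\tau)\in\dot H_x^s$ with comparable norms, the implied constants depending only on how far $\tau$ is from $\pm\pi/2$, i.e.\ on how far $t$ is from $0$ and $\infty$. Since we only need a local-in-time statement, I may freely restrict to $|t|$ bounded above as well, so both endpoints are safely avoided. Second, I would run the Strichartz-based contraction for the transformed equation in a space like $C_\tau\dot H_x^s\cap L^q_\tau W^{s,r}_x$ on a small interval $J\ni\tau_0$; because $s>s_c$ this is the classical Cauchy--Kowalevski-type local theory (Cazenave--Weissler), and the only new feature is the bounded multiplier $(\cos\tau)^{dp/2-2}$, which on $J$ is just a bounded smooth function and does not affect the estimates. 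The length $|J|$ depends only on $\|v(\tau_0)\|_{\dot H_x^s}$ and on $\operatorname{dist}(\tau_0,\{\pm\pi/2\})$; translating back, $|I|$ depends only on $\|e^{-\frac12 i t_0\Delta}u(t_0)\|_{\F\dot H_x^s}$ and on $|t_0|$ (and we are free to shrink so that $0\notin I$). Third, I would transfer uniqueness: a solution in the stated class pulls back to a solution in the transformed class, where uniqueness is standard, so uniqueness holds downstairs as well.

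The main obstacle, and the point that needs care rather than cleverness, is making the norm equivalence between $\F\dot H_x^s$ and $\dot H_x^s$ under the lens transform completely rigorous for fractional $s\in[0,1]$ — for integer $s$ it is a direct computation using $\mathcal{T}(x+it\nabla)\mathcal{T}^{-1} = \text{(derivative)}$, but for fractional $s$ one must either interpolate (the endpoints $s=0$, i.e.\ $L^2$, and $s=1$ being clean) or argue via the commutation with the Galilei/dilation generators and a Littlewood--Paley or square-function characterization of $\F\dot H_x^s = L^2(|x|^{2s}dx)$. A secondary technical point is that the weight $(\cos\tau)^{dp/2-2}$ at $d=3,p=1$ has a negative exponent $-1/2$, so one genuinely needs $\tau_0$ bounded away from $\pm\pi/2$ (equivalently $t_0$ bounded away from $0$ and $\infty$) for the coefficient to be bounded — this is exactly why the statement excludes $t_0=0$, and it is the structural reason a naive fixed-point argument at the origin in $\F\dot H^{1/2}$ would fail. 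Everything else — the Strichartz estimates, the nonlinear estimate $\||u|u\|_{\dot H^s} \lesssim \|u\|_{L^\infty}\|u\|_{\dot H^s}$-type bounds handled via fractional Leibniz, and the contraction — is routine subcritical local theory once the frame is fixed.
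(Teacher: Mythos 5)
Your approach --- exploit the pseudo-conformal structure to convert the weight into a derivative and then close a contraction --- is essentially the paper's; the paper simply stays in the original time variable and applies the fractional vector field $|J(t)|^s := S(t)|x|^sS(-t)$ directly to the Duhamel formula, rather than performing the full lens change of variables. The ``main obstacle'' you flag (rigorously equating the $\F\dot H_x^s$-norm with an $\dot H_x^s$-type norm for fractional $s$) dissolves under the operator identity $|J(t)|^s = M(t)|t\nabla|^s M(-t)$ with $M(t)=e^{i|x|^2/(2t)}$: since $M$ is unimodular, $M(-t)(|u|u)=|v|v$ for $v:=M(-t)u$, so one applies the ordinary fractional chain rule to $|t\nabla|^s(|v|v)$ and no interpolation or square-function machinery is needed. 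One point to correct in your sketch: the parenthetical ``$\dot H^s$ with $s>s_c=1/2$'' is wrong. The value $1/2$ is the critical regularity of the transformed equation with the singular coefficient $t^{-1/2}$ active, i.e.\ for solving up to $t=0$; on a time interval bounded away from $t=0$ that coefficient is a harmless bounded weight, and the effective critical regularity reverts to that of the bare 3D quadratic nonlinearity, namely $s_c=-1/2$. This is exactly why the entire range $0\le s\le 1$ is subcritical in the sense that matters, and why the existence time can be taken to depend only on the norm even at and above $s=1/2$ --- reflected analytically in the factor $T^{(1+2s)/4}$ the paper extracts in its estimate, which is strictly positive for all $s\in[0,1]$. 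As written, your parenthetical would exclude $s\le 1/2$ from the proposition and would leave the norm-dependence of the existence time at $s=1/2$ unexplained, which the paper specifically highlights as the surprising feature.
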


Note that $\F \dot H_x^{1/2}(\R^3)$ is the critical space, and this result covers all the subcritical, critical, and supercritical cases. Moreover, the local existence time depends on the norm of the initial data rather than the profile of the initial data, even in the critical and supercritical cases when $s\ge\frac12$. We will give the proof of this theorem in the appendix. Now, we turn back to the discussion of the main result in Theorem \ref{thm:3dquadratic}.

\begin{remark} We compare Theorem \ref{thm:3dquadratic} to the previous global results of NLS in the critical spaces. 
\begin{enumerate}
\item 
The conditional global well-posedness and scattering in \cite{KMMV17NoDEA,KMMV19DCDS} rely on the a priori assumptions in $\F \dot H_x^{-s_c}$ or $\dot H_x^{s_c}$.
\item 
Beceanu, Deng, Soffer, and the second author \cite{BDSW18sub-critical} proved the first global well-posedness for mass subcritical NLS in $\dot H_x^{s_c}$ with additional assumption that the initial data is radial and compact supported, which enables some weighted estimates for the linear solution. However, the result \cite{BDSW18sub-critical} is based on the almost conservation of mass, while we employ the modification of pseudo conformal energy, whose scaling is below the critical regularity $\dot H^{s_c}$.
\item 
The global results by Dodson \cite{Dodson20critical,Dod23IMRN} rely on the initial data in $\dot W_x^{11/7,7/6}$ and $\dot B_{1,1}^2$, in which the linear flow possesses some additional $\dot H_x^{1/2}$ supercritical estimates by the dispersive inequality. 
\item 
A common feature of the above two approaches in \cite{BDSW18sub-critical} or \cite{Dodson20critical,Dod23IMRN} is that the linear flow becomes smoother away from the origin $t=0$. In our assumption in Theorem \ref{thm:3dquadratic}, the initial data belongs in $\F \dot H_x^{1/2}(\R^3)$, where the linear flow only possesses $\dot H_x^{-1/2}$-level spacetime estimates. 
\end{enumerate}

\end{remark}

\begin{remark} We make several remarks regarding the main difficulty and the method of our result:
\begin{enumerate}
\item 
First, we employ the pseudo conformal transform to change the initial data problem in $\F \dot H_x^{1/2}$ into to a final data one in $\dot H_x^{1/2}$. Then, the key of our argument is to establish the modified energy.
\item 
The main difficult is that there is one-half-derivative gap between the initial data and the energy. However, the tools that has smoothing effect for Sch\"odinger equations, like the local smoothing or bilinear Strichartz estimates, at most reduce $1/2$-order derivative. There is no additional regularity for summation. The similar difficulty also appears in the related problem of 3D cubic NLS in $H_x^{1/2}$. 
\item Our method is also applicable to more general $d$ and $p$, but it is difficult to include all the mass subcritical region.
\end{enumerate}
\end{remark}

\subsection{Historical background}
Now, we recall the previous results related to the subject of this paper in more details.
\subsubsection{On the 3D cubic NLS}
Recall that 3D cubic NLS is $\dot H^{1/2}$-critical, and 3D quadratic NLS is $\dot H^{-1/2}$-critical. Now, we review the results about the low-regularity global well-posedness for the 3D cubic NLS, which is the most classical model in the case $0<s_c<1$.

We have learned that the equation is local well-posed in $\dot H_x^{1/2}$, while the global well-posedness and scattering hold in a smaller space $H_x^1$ \cite{Kat87poincare,LS78JFA}. Although the global well-posedness in the critical space $\dot H_x^{1/2}$ for 3D cubic NLS remains open, there are several approaches towards this subject:
\begin{enumerate}
\item 
The first one is to reduce the derivative of $H^s$. Bourgain \cite{Bou98JAM} first used the high-low decomposition method (introduced in \cite{Bou98IMRN}) to give the global well-posedness in $H_x^s$ with $s>\frac{11}{13}$. The lower bound was then improved by ``I-method" gradually in \cite{CKSTT02MRL,CKSTT04CPAM,Su12MRL}, and the best result up to now is $s>\frac{5}{7}$. Under the radial assumption, Dodson \cite{Dod19CJM} showed that the result holds for almost critical space when $s>\frac12$.
\item 
Note that $H^s$ spaces in the above mentioned results are all $\dot H_x^{1/2}$ super-critical. Dodson \cite{Dodson20critical} first gave the global well-posedness in the critical space $\dot W_x^{11/7,7/6}$, based on the observation that the linear solution becomes more regular away from $t=0$ with initial data in $L_x^p$ with $p<2$. Using this observation and the method in \cite{BDSW18sub-critical}, the authors \cite{SW20cubicNLS}  obtained the global well-posedness in $\dot H_x^{1/2}$ intersected with $\dot W^{s,1}$ for $s>\frac{12}{13}$, which is a $\dot H_x^{1/2}$-subcritical space with the order of derivative less than $1$.

The scattering was also obtained in \cite{SW20cubicNLS}, since the initial data possesses finite mass. Afterwards, Dodson \cite{Dod23IMRN} gave the global well-posedness and scattering merely in the critical space $\dot B_{1,1}^2$ with radial assumption, where the mass is infinite.
\item
Kenig and Merle \cite{KM10TranAMS} proposed the concept of ``conditional global well-posedness and scattering", namely the global well-posedness and scattering hold for the solution that is uniformly bounded in the sharp critical space $\dot H_x^{1/2}$ on the maximal existence interval. 
\item 
In the general energy subcritical and mass supercritical case, the second author collaborated with Beceanu, Deng, and Soffer  \cite{BDSW19inter-critical} constructed the global well-posedness and scattering for a class of radial large data (in the sense that the $H_x^{s}$-norm with $s<s_c$ is large) based on a new outgoing/incoming wave decomposition for Schr\"odinger equations and the deformed Fourier transform.
\end{enumerate}

\subsubsection{Global well-posedness of the mass subcritical NLS}
The local and global well-posedness for defocusing and focusing mass subcritical NLS in $L^2$ were first proved by Tsutsumi \cite{Tsu87Funk}. 

Concerning the related results in the critical space, Killip, Masaki, Murphy, and Visan \cite{KMMV17NoDEA} proved that if the critical weighted $\F \dot H^{-s_c}$-norm of the solution is uniformly bounded, then the global well-posedness and scattering hold in both focusing and defocusing cases, in the spirit of \cite{KM10TranAMS}. 

For the mass subcritical NLS in critical space $\dot H_x^{s_c}$, there exists some ill-posedness for nonradial data \cite{CCT08JFA}, and the local well-posedness and small data scattering were studied in \cite{CHO13DCDS,Hid08Funk,GW14JAM} with $p>\frac{4}{d+1}$. As for the large data case, a global well-posedness for compact radial data in $\dot H^{s_c}$ was established by Beceanu, Deng, Soffer, and the second author \cite{BDSW18sub-critical}, when $p_0(d)<p<\frac4d$ for some suitable $p_0(d)>\frac{4}{d+1}$. Moreover, Killip, Masaki, Murphy, and Visan \cite{KMMV19DCDS} proved the conditional global well-posedness and scattering in the critical Sobolev space $\dot H^{s_c}$ with radial data for defocusing case and $p>\frac{4}{d+1}$. 

\subsubsection{Scattering of the mass subcritical NLS} Next, we recall the scattering results for the mass subcritical NLS. The Strauss exponent 
\EQ{
	\ga(d):=\frac{2-d+\sqrt{d^2+12d+4}}{2d}
}
is first proposed by Strauss in \cite{Str81JFA}, and he proved the scattering for mass subcritical NLS with some smallness conditions in $L^{\frac{p+2}{p+1}}$ when $\ga(d)< p < \frac 4d$. Tsutsumi and Yajima \cite{TY84BAMS} proved the existence of scattering states for $\frac 2d< p <\frac 4d$ with initial data $u_0 \in \Sigma^1$ in the defocusing case, where the scattering holds in $L_x^2$ sense. This result was extended to $\F H^1$-data by Hayashi and Ozawa \cite{HO88poinc,HO89JFA}, see Remark 2 in \cite{HO88poinc}. In the very recently, the result was improved in the sense of $H_x^1$-scattering by Burq, Georgiev, Tzvetkov, and Visciglia \cite{BGTV21NLS}.

Additionally, the construction of inverse wave operator requires the uniqueness of scattering state. The wave and inverse wave operators were constructed on $\Sigma^1$ by Tsutsumi \cite{Tsu85poincare} for $\ga(d)<p<\frac4d$, and by Cazenave-Weissler \cite{CW92CMP} and Nakanishi-Ozawa \cite{NO02NoDEA} for the endpoint exponent $p=\ga(d)$. Furthermore, Cazenave-Weissler \cite{CW92CMP} also proved the inverse wave operator with small $\Sigma^1$-data when $p>\frac{4}{d+2}$, and the $\Sigma^s$-data case was studied by Ginibre-Ozawa-Velo \cite{GOV94poincare} and Nakanishi-Ozawa \cite{NO02NoDEA} for $p>\frac{4}{d+2s}$ with $0<s<\min\fbrk{\frac d2,p}$. In the case of $0 < s < 2$, the wave operator has been constructed by Ginibre-Ozawa-Velo \cite{GOV94poincare} on $\Sigma^s$, subjected to the condition $p>\max\fbrk{\frac 2d,\frac{4}{d+2s},s}$. Recently, the authors \cite{SW23subNLS} first obtained the large data scattering in $\F H^s$ with $s<1$ for defocusing mass subcritical NLS.

The large data scattering in $\F \dot H^s$ with $s<1$ was also studied for focusing mass subcritical NLS. In the focusing case, it is well-known that the NLS has a class of standing wave equations, which do not scatter. However, Masaki \cite{Mas15CPAA,Mas17CPDE} discovered a critical non-scattering solution in $\F H^1$ and $\F \dot H^{-s_c}$ for the mass subcritical NLS strictly below the ground state, unlike the mass critical and supercritical cases. In particular, he proved that the large data scattering holds with $\F \dot H^{-s_c}$-data below that critical solution in \cite{Mas15CPAA}.

\subsection{Ideas of proof}
Next, we summarize the main ideas and the innovations in the proof of Theorem \ref{thm:3dquadratic}.

$\bullet$ \textit{The use of pseudo conformal transform.} After applying the pseudo conformal transform, the initial data problem of 3D quadratic NLS \eqref{eq:nls} in $\F \dot H_x^{1/2}(\R^3)$ is changed into a final data problem of a nonautonomous NLS:
\EQn{\label{eq:nls-pc-intro}
i\pd_t \U + \frac12\De \U = t^{-\frac12} |\U|\U
}
in $\dot H_x^{1/2}(\R^3)$.

After the high-low frequency decomposition of the final data $\V_+=P_{\gsm N_0}\U_+$ and $\W_+=P_{\lsm N_0}\U_+$, we split the solution $\U=\V+\W$ such that $\V=e^{\frac12it\De}\V_+$, which has $\dot H^{1/2}$-level spacetime estimates, and $\W$ solves
\EQ{
i\pd_t \W + \frac12\De \W = t^{-\frac12} |\V+\W|(\V+\W).
}
We expect $\W$ belongs to $\dot H_x^{1/2}\cap\dot H_x^1$-level.

$\bullet$ \textit{The local theory.} Now, we study the local well-posedness for the final data problem of \eqref{eq:nls-pc-intro} in three stages: 
\begin{enumerate}
\item 
First, we prove the local well-posedness at $\dot H_x^{1/2}$-level. The general result for mass subcritical NLS when $\frac2d < p <\frac4d$ has been proved in \cite{KMMV17NoDEA}, thus this local well-posedness part of our result is not new. However, in order for further study, we need to obtain the boundedness of $\dot H_x^{1/2}$-level norm based on the atom space by Koch-Tataru \cite{KT05CPAM}. Moreover, Nakanishi \cite{Nak01siam} first observed that the study of mass subcritical NLS in critical space requires the Lorentz modification of Strichartz estimate. 

Therefore, one of the new points in our method is to combine the Lorentz modification of the Strichartz norms and the atom space method. For the key new estimates in the local argument, please see \eqref{eq:lorentz-strichartz-up} and \eqref{eq:lorentz-duality} below. 
\item 
Second, we update the nonlinear remainder of the local solution obtained in the former step to $\dot H_x^{1}$-level. This is based on the observation that for the second iteration
\EQ{
\int_{\I}^t e^{\frac12i(t-s)\De}(|e^{\frac12is\De}f|e^{\frac12is\De}f) \dd s \text{ in }\dot H_x^1(\R^3)\text{, with }f\in\dot H_x^{1/2}(\R^3),
}
the additional regularity acquired by the Duhamel formula of final data problem is different to the initial data one. 

If considering the initial data problem, then the second iteration term
\EQ{
\int_{0}^t e^{\frac12i(t-s)\De}(|e^{\frac12is\De}f|e^{\frac12is\De}f) \dd s\text{, with }f\in\dot H_x^{1/2}(\R^3),
}
should be at $\dot H_x^{1/2}(\R^3)$-level by scaling analysis. Moreover, by the H\"older's inequality in $t$, we can only expect the second iteration has local estimates with scaling \textbf{equal to or less than} $\dot H_x^{1/2}$-level. 

On the contrary, in the case of final data problem, the scaling of the second iteration should be \textbf{equal to or higher than} the critical regularity, which includes the desired $\dot H_x^1$-estimates.
\item 
Third, in order to extend the solution away from the infinity $t=+\I$, we need to prove a local result with initial data $\W(t_1)$ in $\dot H_x^{1/2}\cap\dot H_x^{1}$, with the existence time depends on the $\norm{\W(t_1)}_{\dot H_x^{1/2}\cap\dot H_x^1}$, rather than its profile. This is reasonable since the equations becomes subcritical in $\dot H_x^1$. The technical difficulty of the proof is similar to the above step, up to some technical treatment for the time interval. Therefore, the global well-posedness is reduced to some a priori estimate at $\dot H_x^{1/2}\cap\dot H_x^{1}$-level. 
\end{enumerate}

$\bullet$ \textit{Main idea of the nonlinear estimate in local $\dot H^1$-theory.}  The main difficulty in the local theory is to reduce the first order derivative acting on the linear solution: we need to deal with the term
\EQ{
\norm{(|\V|+|\W|)\nabla\V}_{N},
}
where $N$ denotes some dual Strichartz norm. The main obstruction is that the nonlinearity is not algebraic, and there is no additional regularity for summation if we make the frequency decomposition. The idea is combining the local smoothing effect estimate and radial Sobolev's inequality. Roughly speaking, if we neglect some technical treatment on time interval, the nonlinear term can be bounded by
\EQ{
\norm{(|\V|+|\W|)\nabla\V}_{L_{t}^{\frac43}L_x^{\frac32}} \lsm & \normb{|x|^{\frac12}(|\V|+|\W|)}_{L_t^\I L_x^6} \normb{|x|^{-\frac12}\nabla\V}_{L_{t,x}^2} \\
\lsm & (\norm{\V}_{\dot H_x^{\frac12}} + \norm{\W}_{\dot H_x^{\frac12}})\norm{\V}_{\dot H_x^{\frac12}}.
}
In fact, we should make some $\ep$-perturbation for summation of spatial localization.

$\bullet$ \textit{A priori estimate.} Next, we establish the almost conservation of the pseudo conformal energy of the nonlinear remainder $\W$: 
\EQn{\label{eq:pseudo-conformal-energy-intro}
\E(t) := \frac{1}{4}t^{\frac{1}{2}}\int_{\R^3}|\nabla\W(t,x)|^2 \dx + \frac{1}{3} \int_{\R^3}|\W(t,x)|^{3} \dx \lsm N_0.
}
by the bootstrap argument. 

First, the energy is finite near $t=+\I$. By the local result, the solution $\W$ belongs to $\dot H_x^{1/2} \cap \dot H_x^1$. Particularly, note that the $L^3$ part is exactly bounded by the critical space using Sobolev's inequality. This is what makes the 3D quadratic case special.

In order to close the energy estimate, the main term of the energy increment is
\EQ{
\int_I \int_{\R^3} |\nabla \W\cdot \nabla\V \W| \dd x\dd t,
}
where $\V$ denotes the high frequency part of the linear flow and $I$ denotes some suitable long time interval. 

Since we only consider the global well-posedness rather than scattering, it suffices to extend the solution close to the origin, therefore the temporal singularity problem does not appear as in our previous study in \cite{SW23subNLS}. Then, we can overlook the factor $t^{-\al}$ for some $\al>0$ and the length of time integral. 

Therefore, roughly speaking, we can estimate the main term by local smoothing and radial Sobolev inequality,
\EQnnsub{
\int_I \int_{\R^3} |\nabla \W\cdot \nabla\V \W| \dd x\dd t \lsm_{|I|} & \norm{\nabla \W}_{L_t^\I L_x^2} \normb{|x|^{-\frac12} \nabla\V}_{L_{t,x}^2} \normb{|x|^{\frac12}\W}_{L_{t}^\I L_x^\I} \nonumber\\
\lsm_{|I|} & \norm{\V(0)}_{\dot H_x^{\frac12}} \norm{\nabla\W}_{L_t^\I L_x^2}^2 \nonumber\\
\lsm_{|I|} & \norm{\V(0)}_{\dot H_x^{\frac12}} \E(t). \label{observation-1}\tag{Observation I}
}
However, to achieve the above heuristic idea, there is also a problem of summation of spatial discretization. 

The main difficulty originates from the local smoothing estimate holds only on one piece of dyadic decomposition $\sup_{j\in\Z}\normb{|x|^{-\frac12} \nabla\V}_{L_{t,x\sim 2^j}^2}$, or the global in space version requires a additional small factor $\normb{\jb{x}^{-\frac12-\ep} \nabla\V}_{L_{t,x}^2}$. Moreover, to make some $\ep$-perturbation for the idea in \eqref{observation-1} is not as easy as the local case, since we need to control the energy increase properly.

After the localization in space
\EQ{
\sum_{j\in\Z} \int_I \int_{\R^3} \chi_j|\nabla \W\cdot \nabla\V \W| \dd x\dd t,
}
where $\chi_j$ is a cut-off function on $\fbrk{|x|\sim 2^j}$, we adopt the following strategy to cover the logarithmic loss over the summation of $j$:
\begin{enumerate}
\item ($j\ge 1$ case) By the radial Sobolev inequality and Gagliardo-Nirenberg's inequality,
\EQ{
	\normb{|x|^{\frac12+}\W}_{L_{t}^\I L_x^\I} \lsm \norm{\W}_{L_t^\I L_x^3}^{0+} \norm{\W}_{L_t^\I \dot H_x^1}^{1-}. 
}
Since the $N_0$-increase of $L^3$-estimate is lower than $\dot H^1$-estimate, we can derive that the energy increase of $\normb{|x|^{\frac12+}\W}_{L_{t}^\I L_x^\I}$ is lower than $N_0^{1/2}$.

Therefore, we can make the following perturbation:
\EQnnsub{\label{observation-2}\tag{Observation II}
\int_I \int_{\R^3} \chi_j |\nabla \W\cdot \nabla\V \W| \dd x\dd t \lsm_{|I|} & 2^{-\ep j} \norm{\nabla \W}_{L_t^\I L_{x\sim2^j}^2} \normb{2^{-\frac12j} \nabla\V}_{L_{t,x\sim2^j}^2} \normb{2^{\frac12j+}\W}_{L_{t}^\I L_{x\sim2^j}^\I}.
}
Here, the factor $2^{-\ep j}$ serves to the summation over the range $j\ge 1$, and the increase of $N_0$ is reasonable.
\item (Various long time spacetime estimates based on atom space $U_\De^2$) Under the bootstrap hypothesis \eqref{eq:pseudo-conformal-energy-intro}, we have that $\norm{\W}_{L_t^\I \dot H_x^1} \lsm N_0^{1/2}$ and $\norm{\W}_{L_t^\I L_x^3} \lsm N_0^{1/3}$. Moreover, we denote $X^s$ as some $\dot H_x^s$-level spacetime norm based on $U_\De^2$. (see \eqref{defn:xs} for precise definition) Then, roughly speaking, we have
\EQ{
\norm{\W}_{X^{\frac12}} & \lsm  \normb{|\nabla|^{\frac12}(|\W|\W)}_{L_t^{\frac43}L_x^{\frac32}}+\text{``lower increase term"} \\
& \lsm_{|I|}  \normb{|\nabla|^{\frac12}\W}_{L_t^\I L_x^3} \norm{\W}_{L_t^\I L_x^3} +\text{``lower increase term"} \\
& \lsm_{|I|}  N_0^{\frac56},
}
and
\EQ{
\norm{\W}_{X^{1}} & \lsm  \norm{\nabla(|\W|\W)}_{L_t^{\frac43}L_x^{\frac32}}+\text{``lower increase term"} \\
& \lsm_{|I|}  \normb{|\nabla|^{\frac12}\W}_{L_t^\I L_x^2} \norm{\W}_{L_t^\I L_x^6} +\text{``lower increase term"} \\
& \lsm_{|I|}  N_0.
}
These global estimates allow us to control the spacetime norm related to local smoothing effect:
\EQ{
\normb{|x|^{-\frac12}\nabla\W}_{L_{t,x}^2} \lsm_{|I|} \norm{\W}_{X^{\frac12}} \lsm_{|I|} N_0^{\frac56};\quad \normb{|x|^{-\frac12}|\nabla|^{\frac32}\W}_{L_{t,x}^2} \lsm_{|I|} \norm{\W}_{X^{1}} \lsm_{|I|} N_0.
}

\item ($j\le 1$ case) For the $j\le 1$ case, we use the following observation
\EQnnsub{\label{observation-3}
& \int_I \int_{\R^3} \chi_j \absb{|\nabla|^{\frac32} \W|\nabla|^{\frac12}\V \W} \dd x\dd t \nonumber\\
\lsm_{|I|} & 2^{\frac12j} \normb{2^{-\frac12j}|\nabla|^{\frac32}\W}_{L_{t,x\sim2^j}^2} \normb{2^{-\frac12j} |\nabla|^{\frac12}\V}_{L_{t,x\sim2^j}^2} \normb{2^{\frac12j}\W}_{L_{t}^\I L_{x\sim2^j}^\I}. \tag{Observation III}
}
Here, $2^{\frac12j}$ is used to sum over the range $j\le 1$. We can check that the increase of each $\W$-term: $\normb{|x|^{-\frac12}|\nabla|^{\frac32}\W}_{L_{t,x}^2} \lsm N_0$ and $\normb{|x|^{\frac12}\W}_{L_{t}^\I L_x^\I}\lsm N_0^{1/2}$. This gives an increase of $N_0^{\frac32}$. Recall that $\V$ is high-frequency cut-off, that is $|\xi|\gsm N_0$, then 
\EQ{
\normb{2^{-\frac12j} |\nabla|^{\frac12}\V}_{L_{t,x\sim2^j}^2} \lsm \norm{\V_+}_{L^2} \lsm N_0^{-\frac12},
}
which can cover the additional increase of $N_0$. Moreover, to achieve the idea in  \eqref{observation-3}, we also need some technical treatment for the commutator of $\chi_j$ and $|\nabla|^{1/2}$.
\end{enumerate}

\subsection{Organization of the paper}
In Section \ref{sec:prel}, we give some notation and useful lemmas. In Section \ref{sec:proof}, we give the proof of Theorem \ref{thm:3dquadratic}: in Section \ref{sec:highlow}, we give the basic settings; in Section \ref{sec:localI}-\ref{sec:localIII}, we give the above mentioned three local results; in Section \ref{sec:energy}, we prove the a priori estimate; in Section \ref{sec:gwp}-\ref{sec:maintheorem}, we finish the proof of Theorem \ref{thm:3dquadratic}.

\vspace{2cm}

\section{Preliminaries}\label{sec:prel}

\vspace{0.5cm}

\subsection{Basic notation}
For any $a\in\R$, $a\pm:=a\pm\epsilon$ for arbitrary small $\epsilon>0$. 
For any $z\in\C$, we define $\re z$ and $\im z$ as the real and imaginary part of $z$, respectively. 
$C>0$ represents some constant that may vary from line to line. We write $C(a)>0$ for some constant depending on coefficient $a$. If $f\loe C g$, we write $f\lsm g$. If $f\loe C g$ and $g\loe C f$, we write $f\sim g$. Suppose further that $C=C(a)$ depends on $a$, then we write $f\lsm_a g$ and $f\sim_a g$, respectively. 


We use $\wh f$ or $\F f$ to denote the Fourier transform of $f$:
\EQ{
	\wh f(\xi)=\F f(\xi):= \rev{(2\pi)^{d/2}}\int_{\R^d} e^{-ix\cdot\xi}f(x)\rm dx.
}
We also define the inverse Fourier transform:
\EQ{
	\F^{-1} g(x):= \rev{(2\pi)^{d/2}}\int_{\R^d} e^{ix\cdot\xi}g(\xi)\rm d\xi.
}
Using the Fourier transform, we can define the fractional derivative $\abs{\nabla} := \F^{-1}|\xi|\F $ and $\abs{\nabla}^s:=\F^{-1}|\xi|^s\F $.  

Take a cut-off function $\phi\in C_{0}^{\infty}(\R)$ such that $\phi(r)=1$ if $0\loe r\loe1$ and $\phi(r)=0$ if $r>2$. Next, we introduce the spatial cut-off function. Denote $\chi_j(r):=\phi(2^{-j}r)-\phi(2^{-j+1}r)$ for $j\in\Z$, and $\chi_{\le j}:= \sum_{k=-\I}^j \chi_k$. Particularly, $\chi_{\le 0}(r)=\phi(r)$. We also define the fattening version $\wt  \chi_j:=\phi(2^{-j-1}r)-\phi(2^{-j+2}r)$ and $\wtt  \chi_j:=\phi(2^{-j-2}r)-\phi(2^{-j+3}r)$ with the property $\chi_j=\chi_j\wt \chi_j=\chi_j\wt \chi_j \wtt \chi_j$.

We also need the usual homogeneous dyadic Littlewood-Paley decomposition. For dyadic $N\in 2^\Z$, let $\phi_{N}(r) := \phi(N^{-1}r)$. Then, we define
\EQ{
	\ph_1(r):=\phi(r)\text{, and }\ph_N(r):=\phi_N(r)-\phi_{N/2}(r)\text{, for any }N\goe 2.
}
We define the inhomogeneous Littlewood-Paley dyadic operator: for any $N\in2^\Z$,
\EQ{
	f_{N}= P_N f := \mathcal{F}^{-1}\brko{ \ph_N(|\xi|) \hat{f}(\xi)}.
}
Then, by definition, we have $f=\sum_{N\in2^\Z}f_N$.

Given $1\loe p \loe \I$, $L^p(\R^d)$ denotes the usual Lebesgue space. For $1\le p<\I$ and $1\le q\le\I$, the Lorentz space $L^{p,q}$ is defined by its quasi-norm
\EQ{
\norm{f}_{L^{p,q}(\R^d)} := p^{\frac1q} \normb{ \la \absb{\fbrk{x\in\R^d:|f(x)|>\la}}^{\frac 1p} }_{L^q((0,\I),\frac{\dd\la}{\la})}.
}
We also define $\jb{\cdot,\cdot}$ as real $L^2$ inner product:
\EQ{
	\jb{f,g} := \re\int f(x)\wb{g}(x)\dx.
}

For any $1\loe p <\I$, define $l_N^p:=l_{N\in 2^\Z}^p$ by its norm
\EQ{
	\norm{c_N}_{l_{N\in 2^\Z}^p}^p:=\sum_{N\in 2^\Z}|c_N|^p.
}
The space $l_N^\I=l_{N\in2^\Z}^\I$ is defined by $\norm{c_N}_{l_{N\in 2^\Z}^\I}:=\sup_{N\in 2^\Z}|c_N|$. In this paper, we use the following abbreviations
\EQ{
	\sum_{N:N\loe N_1}:=\sum_{N\in2^\Z:N\loe N_1}\text{, and }\sum_{N\loe N_1}:=\sum_{N,N_1\in2^\Z:N\loe N_1}.
}

For any $0\loe\gamma\loe1$, we call that the exponent pair $(q,r)\in\R^2$ is $\dot H^\ga$-$admissible$, if $\frac{2}{q}+\frac{d}{r}=\half d-\ga$, $2\loe q\loe\I$, $2\loe r\loe\I$, and $(q,r,d)\ne(2,\I,2)$. If $\ga=0$, we say that $(q,r)$ is $L^2$-$admissible$. 
\subsection{Atom space and bounded variation space}
We recall the definitions of $U^p$ and $V^p$, and some properties used in this paper. The $U^p$-$V^p$ method was first introduced by Koch-Tataru \cite{KT05CPAM}, and we also refer the readers to \cite{HHK10Poinc,KTV14book,KT18Duke,CH18AnnPDE} for their complete theories. 

\begin{defn}
	Let $\mathcal{Z}$ be the set of finite partitions $-\I<t_0<t_1<...<t_K=\I$. 
	\enu{
		\item
		For $\fbrk{t_k}_{k=0}^K\in\ZZ$ and $\fbrk{\phi_k}_{k=0}^{K-1}\subset L_x^2(\R^d)$ with $\sum_{k=0}^{K-1} \norm{\phi_k}_{L_x^2(\R^d)}^p =1$, we call the function $a:\R\ra L_x^2(\R^d)$ given by $a=\sum_{k=1}^K \mathbbm{1}_{[t_{k-1},t_k)}\phi_{k-1}$ a $U^p$-$atom$. Furthermore, we define the atomic space
		\EQn{
			U^p(\R;L^2(\R^d)):=\fbrkb{u=\sum_{j=1}^\I \la_ja_j: a_j\ U^p\text{-atom, }\la_j\in\C\text{ with }\sum_{j=1}^\I\abs{\la_j}<\I},
		}
		with norm
		\EQn{\label{eq:upnorm}
			\norm{u}_{U^p(\R;L^2(\R^d))}:=\inf\fbrkb{\sum_{j=1}^\I\abs{\la_j}:u=\sum_{j=1}^\I \la_ja_j,a_j\  U^p\text{-atom, }\la_j\in\C}.
		}
		\item
		We define $V^p(\R;L^2(\R^d))$ as the normed space of all functions $v:\R\ra L^2(\R^d)$ such that 
		\EQn{\label{eq:vpnorm}
			\norm{v}_{V^p(\R;L_x^2(\R^d))}:=\sup_{\fbrk{t_k}_{k=0}^K\in\ZZ}\brkb{\sum_{k=1}^K\norm{v(t_k)-v(t_{k-1})}_{L_x^2(\R^d)}^p}^{1/p}
		}
		is finite, where we use the convention $v(t_K)=v(\I)=0$.  $V_{rc}^p$ denotes the closed subspace of all right-continuous $V^p$ functions with $\lim_{t\ra -\I}v(t)=0$.
		\item
		We define $U_\De^2(\R; L_x^2(\R^d))$ as the adapted normed space:
		\EQ{
			U_\De^2(\R; L_x^2):=\fbrkb{u:\norm{u}_{U_\De^2(\R; L_x^2)}:=\norm{e^{-it\De}u}_{U^2(\R;L_x^2(\R^d))}<\I}.
		}
		Similarly, $V_\De^2(\R; L_x^2(\R^d))$ denotes the adapted normed space
		\EQ{
			V_\De^2(\R; L_x^2(\R^d)):=\fbrkb{u:\norm{u}_{V_\De^2(\R; L_x^2(\R^d))}:=\norm{e^{-it\De}u}_{V^2(\R;L_x^2(\R^d))}<\I, e^{-it\De}u\in V_{rc}^2}.
		}
	}
\end{defn}

In this paper, we will use restriction spaces to some interval $I\subset \R$: $U^p(I;L_x^2(\R^d))$, $V^p(I;L_x^2(\R^d))$, $U_\De^p(I;L_x^2(\R^d))$, and $V_\De^p(I;L_x^2(\R^d))$. See Remark 2.23 in \cite{HHK10Poinc} for more details.

Note that for $1\loe p<q<\I$, the embeddings
\EQ{
	U^p(\R;L_x^2(\R^d)) \hra L_t^\I(\R;L_x^2(\R^d))\text{, }V^p(\R;L_x^2(\R^d)) \hra L_t^\I(\R;L_x^2(\R^d)),
}
and $U^p\hra V_{rc}^p \hra U^q$ are continuous.

We need the following classical linear estimate and duality formula:
\begin{lem}[\cite{HHK10Poinc}]\label{lem:upvpduality}
	Let $I$ be an interval such that $0=\inf I$. Then, for any $f\in L_x^2(\R^d)$,
	\EQn{\label{eq:upvpduality-1}
		\norm{e^{it\De}f}_{U_\De^2(I;L_x^2(\R^d))} \lsm \norm{f}_{L_x^2(\R^d)},
	} 
	and for $F(t,x)\in L_t^1L_x^2(I\times\R^d)$,
	\EQn{\label{eq:upvpduality-2}
		\normb{\int_0^t e^{i(t-s)\De} F(s) \ds}_{U_\De^2(I;L_x^2)}
		= \sup_{\norm{g}_{V_\De^2(I;L_x^2(\R^d))}=1} \abs{\int_{I}\int_{\R^d} F(t)\wb{g(t)}\dx\dd t}.
	}
\end{lem}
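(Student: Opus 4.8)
The plan is to obtain both claims from the abstract $U^p$--$V^p$ calculus of \cite{HHK10Poinc}, using only two ingredients: the atomic description of $U^2$, and the $U^2$--$V^2$ duality together with its realization by integration by parts. For the linear bound \eqref{eq:upvpduality-1} I would just unwind definitions: $\|e^{it\De}f\|_{U_\De^2(I;L_x^2)}=\|e^{-it\De}e^{it\De}f\|_{U^2(I;L_x^2)}$, and $e^{-it\De}e^{it\De}f=f$ is constant in $t$. Since $0=\inf I$, the function $t\mapsto\mathbbm{1}_{[0,\I)}(t)\,f$ extends $f\cdot\mathbbm{1}_I$ to all of $\R$ and, up to the scalar $\|f\|_{L_x^2}$, is a one-step $U^2$-atom for the partition $0<\I$. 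Hence $\|f\|_{U^2(I;L_x^2)}\le\|f\|_{L_x^2}$, which gives \eqref{eq:upvpduality-1}.

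The content is \eqref{eq:upvpduality-2}. I would set $u(t):=\int_0^t e^{-is\De}F(s)\,\ds$ for $t\in I$, so that $e^{it\De}u(t)=\int_0^t e^{i(t-s)\De}F(s)\,\ds$ and the left-hand side of \eqref{eq:upvpduality-2} equals $\|u\|_{U^2(I;L_x^2)}$. Since $\|e^{-is\De}F(s)\|_{L_x^2}=\|F(s)\|_{L_x^2}\in L^1(I)$, the $L_x^2$-valued map $u$ is absolutely continuous, with $u(0)=0$, $\partial_t u(t)=e^{-it\De}F(t)$ a.e., and $\lim_{t\to\sup I}u(t)$ existing. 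The $U^2$--$V^2$ duality of \cite{HHK10Poinc} then gives
\[
\|u\|_{U^2(I;L_x^2)}=\sup\big\{\,|B(u,v)|\ :\ v\in V_{rc}^2(I;L_x^2),\ \|v\|_{V^2(I;L_x^2)}\le 1\,\big\},
\]
where $B$ is the canonical bounded bilinear pairing of \cite{HHK10Poinc}.

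For $u$ absolutely continuous as above and $v$ of bounded $2$-variation, the integration-by-parts identity of \cite{HHK10Poinc} reads $B(u,v)=\int_I\langle\partial_t u(t),v(t)\rangle\,\dd t$, the boundary contributions dropping out because $u(0)=0$ and $v(\sup I)=0$ under the standing convention. Writing $v(t)=e^{-it\De}g(t)$, i.e. $g(t):=e^{it\De}v(t)$, one has $g\in V_\De^2(I;L_x^2)$ with $\|g\|_{V_\De^2}=\|v\|_{V^2}$; using that $e^{\pm it\De}$ is unitary on $L_x^2$ and hence preserves the real pairing $\langle\cdot,\cdot\rangle$, this becomes
\[
B(u,v)=\int_I\langle e^{-it\De}F(t),e^{-it\De}g(t)\rangle\,\dd t=\int_I\langle F(t),g(t)\rangle\,\dd t=\int_I\int_{\R^d}F(t)\,\overline{g(t)}\,\dx\,\dd t.
\]
Taking the supremum over $\|g\|_{V_\De^2}\le 1$ yields \eqref{eq:upvpduality-2}; multiplying $g$ by a unimodular constant changes neither $\|g\|_{V_\De^2}$ nor the modulus of the last integral, which reconciles the real-part convention in $\langle\cdot,\cdot\rangle$ with the complex integral in the statement.

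I expect the one genuinely delicate point to be the integration-by-parts identity for $B$ and the vanishing of the boundary terms: that requires the explicit formula for $B$ on step functions from \cite{HHK10Poinc}, a density argument approximating the absolutely continuous $u$ by step functions in $U^2$, and the hypothesis $0=\inf I$ (to force $u(0)=0$). The restriction-space bookkeeping --- finite versus half-infinite $I$, and the interplay between $V^2$ and its right-continuous subspace in the duality statement --- is routine given the conventions of \cite{HHK10Poinc} and presents no essential obstacle.
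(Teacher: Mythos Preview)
Your proposal is correct and follows the standard route from \cite{HHK10Poinc}. Note, however, that the paper does not give its own proof of this lemma: it is stated with a citation to \cite{HHK10Poinc} and used as a black box, so there is no ``paper's proof'' to compare against --- your sketch is precisely the argument one would extract from that reference.
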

\subsection{Useful lemmas}
In this subsection, we gather some useful results.
\begin{lem}[Maximal Littlewood-Paley estimates, \cite{SSW23CMP}]\label{lem:linfty-littewoodpaley}
	Let $1<p<\I$ and $f\in L_x^p(\R^d)$. Then, we have
	\EQ{
		\normb{\sup_{N\in 2^\N}\absb{P_{N} f}}_{L_x^p(\R^d)} + \normb{\sup_{N\in 2^\N}\absb{P_{\loe N} f}}_{L_x^p(\R^d)} \lsm \norm{f}_{L_x^p(\R^d)}.
	}
\end{lem}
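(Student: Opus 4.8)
\textbf{Proof plan for Lemma \ref{lem:linfty-littewoodpaley}.}
The plan is to reduce the maximal inequality to a vector-valued Calder\'on--Zygmund bound via a square-function estimate. First I would observe that, since $P_N$ and $P_{\le N}$ are convolution operators with $L^1$-normalized kernels $2^{dj}\check\ph(2^j\cdot)$ and $2^{dj}\check\phi(2^j\cdot)$ (here $N=2^j$), the operator $f\mapsto (P_Nf)_{N\in 2^\N}$ is given by convolution with an $\ell^\I$-valued kernel $K(x)=(2^{dj}\check\ph(2^jx))_{j\ge 0}$. To apply the standard vector-valued Calder\'on--Zygmund theory I would in fact work with the slightly larger $\ell^2$-norm on the target: it suffices to prove $\bignorm{(\sum_{N}|P_Nf|^2)^{1/2}}_{L^p_x}\lsm\norm{f}_{L^p_x}$ for all $1<p<\I$, because $\sup_N|P_Nf|\le(\sum_N|P_Nf|^2)^{1/2}$ pointwise, and similarly for $P_{\le N}$ after writing $P_{\le N}f=f-\sum_{M>N}P_Mf$ is \emph{not} quite convergent in general; instead I would treat $P_{\le N}$ directly, using that its kernel is $2^{dj}\check\phi(2^jx)$ with $\check\phi$ Schwartz, so the same argument applies verbatim to the family $(P_{\le N})_N$.

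The key step is then the vector-valued singular-integral estimate. The operator $Tf=(P_Nf)_{N}$ maps $L^2_x(\R^d)\to L^2_x(\R^d;\ell^2)$ with norm $\lsm 1$: by Plancherel, $\sum_N|\widehat{P_Nf}(\xi)|^2=\sum_N\ph_N(|\xi|)^2|\wh f(\xi)|^2\lsm|\wh f(\xi)|^2$ since at each $\xi$ only $O(1)$ of the $\ph_N$ are nonzero (they have finite overlap). For the H\"ormander condition on the $\ell^2$-valued kernel $K(x)=(2^{dj}\check\ph(2^jx))_j$, I would estimate
\EQ{
\int_{|x|\ge 2|y|}\normb{K(x-y)-K(x)}_{\ell^2}\dx \lsm 1,
}
uniformly in $y$, by splitting the dyadic sum in $j$ at the scale $|y|^{-1}$: for $2^j|y|\ge 1$ use the trivial bound $\norm{\cdot}_{\ell^2}\le\norm{\cdot}_{\ell^1}\le\sum_j 2^{dj}(|\check\ph(2^j(x-y))|+|\check\ph(2^jx)|)$ together with the rapid decay of $\check\ph$; for $2^j|y|<1$ use the mean value theorem, $|K_j(x-y)-K_j(x)|\lsm 2^{dj}(2^j|y|)\sup_{|z|\sim|x|}2^{?}|\nabla\check\ph(2^jz)|$, and again the Schwartz decay. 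Summing over $j$ produces a convergent geometric-type series, giving the claimed bound. Then the vector-valued Calder\'on--Zygmund theorem (e.g.\ \cite{KTV14book} or the classical Benedek--Calder\'on--Panzone theorem) upgrades the $L^2\to L^2(\ell^2)$ bound to $L^p\to L^p(\ell^2)$ for all $1<p<\I$, which is what we want.

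The main obstacle I anticipate is purely bookkeeping: verifying the H\"ormander/gradient kernel bounds for the $\ell^2$-valued kernel with constants uniform in the number of frequencies, i.e.\ making the split at scale $|y|^{-1}$ rigorous and checking that the Schwartz tails of $\check\ph$ and $\nabla\check\ph$ indeed sum. There is no conceptual difficulty beyond the standard Littlewood--Paley machinery; alternatively, one could cite the maximal theorem for the heat or Poisson semigroup and compare, but the direct Calder\'on--Zygmund route above is self-contained. A small point worth care is that the supremum is over the inhomogeneous scales $N\in 2^\N$ (so $N\ge 1$), which only makes the sum easier since the low-frequency tail is absent; the estimate for $\sup_N|P_{\le N}f|$ follows identically because $(\phi(2^{-j}|\xi|))^2$ also has the finite-overlap property once one subtracts consecutive terms, or directly since each $P_{\le N}$ is a nice averaging operator and the family is again a vector-valued CZ operator.
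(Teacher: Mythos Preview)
The paper does not give its own proof of this lemma; it simply cites \cite{SSW23CMP}. So there is no in-paper argument to compare against, and I evaluate your proposal on its own merits.

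Your plan for the $P_N$ piece is fine (it is just the Littlewood--Paley square function estimate, Lemma~\ref{lem:square-function}), but the $P_{\le N}$ piece has a genuine gap. You propose to dominate $\sup_N |P_{\le N}f|$ by $(\sum_N |P_{\le N}f|^2)^{1/2}$ and then run vector-valued Calder\'on--Zygmund with target $\ell^2$. This fails at the $L^2\to L^2(\ell^2)$ step: since $P_{\le N}f\to f$ as $N\to\infty$, the series $\sum_{N\in 2^\N}|P_{\le N}f|^2$ diverges for every nonzero $f$. On the Fourier side, $\sum_N |\phi(N^{-1}|\xi|)|^2=\infty$ whenever $|\xi|\le 1$, so the family $\{\phi_N\}$ has no finite-overlap property. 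Your suggested fix (``subtract consecutive terms'') just returns you to the $P_M$ operators, and there is no way to pass from $(\sum_M|P_Mf|^2)^{1/2}$ back to $\sup_N|P_{\le N}f|=\sup_N|\sum_{M\le N}P_Mf|$ without a logarithmic loss. Running the $\ell^\infty$-valued Calder\'on--Zygmund machine instead is circular, since the required $L^2\to L^2(\ell^\infty)$ input is exactly the $p=2$ case of the lemma.

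The standard argument is much shorter and bypasses all of this: because $\check\phi$ and $\check\ph$ are Schwartz, each kernel $2^{dj}\check\phi(2^j\cdot)$ (resp.\ $2^{dj}\check\ph(2^j\cdot)$) is pointwise dominated by a fixed radially decreasing $L^1$ majorant, so
\[
|P_{\le N}f(x)|+|P_Nf(x)|\lsm \M f(x)\quad\text{uniformly in }N,
\]
and the lemma follows immediately from the Hardy--Littlewood maximal theorem. This is the one-line proof you want; the $\ell^2$ route is both more work and, for $P_{\le N}$, incorrect as stated.
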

\begin{lem}[$L^pl^2$-boundedness for maximal function, \cite{Stein93book}]\label{lem:HL-boundedness}
	Let $1<p<\I$ and $\fbrk{f_N}_{N\in 2^\Z}$ be a sequence of functions such that $\norm{f_N}_{l_{N\in2^\Z}^2}\in L_x^p(\R^d)$. Then, we have
	\EQ{
		\norm{\M(f_N)}_{L_x^p l_{N}^2(\R^d\times2^\Z)} \lsm \norm{f_N}_{L_x^p l_{N}^2(\R^d\times2^\Z)}.
	}
\end{lem}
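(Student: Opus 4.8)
This is the Fefferman--Stein vector-valued maximal inequality, with $\M$ the Hardy--Littlewood maximal operator applied to each $f_N$ separately. Writing $F:=\norm{f_N}_{l_N^2}$ and $G:=\norm{\M(f_N)}_{l_N^2}$, the claim is $\norm{G}_{L_x^p}\lsm\norm{F}_{L_x^p}$ for $1<p<\I$. The case $p=2$ is immediate from Fubini and the scalar $L_x^2$-boundedness of $\M$: $\norm{G}_{L_x^2}^2=\sum_{N\in2^\Z}\norm{\M f_N}_{L_x^2}^2\lsm\sum_{N\in2^\Z}\norm{f_N}_{L_x^2}^2=\norm{F}_{L_x^2}^2$. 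This cannot be upgraded to $p\ne2$ by interpolating scalar bounds alone: the trivial $\ell^p$-version $\norm{\M(f_N)}_{L_x^pl_N^p}\lsm\norm{f_N}_{L_x^pl_N^p}$ interpolates with $p=2$ only to reproduce the $\ell^p$-exponent. So a genuinely non-diagonal input is needed, and I would split into $p>2$ and $p<2$.

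\emph{The range $2<p<\I$, via duality.} Since $p/2>1$, put $q:=(p/2)'\in(1,\I)$ and use $L_x^{p/2}$-duality: it suffices to bound $\int_{\R^d}\sum_N(\M f_N)^2\,\phi\,\dx$ for every $0\loe\phi\in L_x^q$ with $\norm{\phi}_{L_x^q}\loe1$ (reducing first to finitely many $N$ by monotone convergence). Cauchy--Schwarz applied to each averaging ball gives the pointwise bound $(\M f_N)^2\loe\M(|f_N|^2)$, and the scalar Fefferman--Stein inequality $\int(\M h)\phi\,\dx\lsm\int|h|\,\M\phi\,\dx$ (valid for all $h,\phi\goe0$, proved by a covering argument) applied with $h=|f_N|^2$ yields $\int(\M f_N)^2\phi\,\dx\lsm\int|f_N|^2\,\M\phi\,\dx$. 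Summing in $N$ and applying H\"older in $L_x^{p/2}\times L_x^q$ together with $\norm{\M\phi}_{L_x^q}\lsm\norm{\phi}_{L_x^q}$ (as $q>1$) gives $\norm{G}_{L_x^p}^2\lsm\norm{F^2}_{L_x^{p/2}}=\norm{F}_{L_x^p}^2$.

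\emph{The range $1<p<2$.} Here duality is unavailable; the cleanest route is weighted estimates plus extrapolation. For any Muckenhoupt weight $v\in A_2$, the scalar Muckenhoupt maximal theorem gives $\norm{\M h}_{L_x^2(v)}\lsm_{[v]_{A_2}}\norm{h}_{L_x^2(v)}$; summing in $N$ produces the $l_N^2$-valued weighted bound $\norm{G}_{L_x^2(v)}\lsm_{[v]_{A_2}}\norm{F}_{L_x^2(v)}$ for all $v\in A_2$. Rubio de Francia's extrapolation theorem, applied to the pair $(F,G)$, then upgrades this to $\norm{G}_{L_x^p(v)}\lsm_{[v]_{A_p}}\norm{F}_{L_x^p(v)}$ for all $1<p<\I$ and all $v\in A_p$; taking $v\equiv1$ gives the assertion (and in fact reproves the $p\goe2$ cases as well). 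Alternatively, one argues classically: establish the weak-type estimate $\abs{\fbrk{x:G(x)>\la}}\lsm\la^{-1}\norm{F}_{L_x^1}$ by a Calder\'on--Zygmund decomposition of the scalar envelope $F$ at height $\la$ --- handling the good part by the $p=2$ case and Chebyshev, and the bad part by the support and cancellation properties of the Calder\'on--Zygmund pieces --- and then interpolate (Marcinkiewicz) with the strong $(2,2)$ bound.

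The main obstacle is precisely this non-dual input for $p<2$: no interpolation of scalar or diagonal ($\ell^p$) estimates yields the off-diagonal $L^p(\ell^2)$ bound, so one must invoke either the weak-type $(1,1)$ estimate for the vector-valued maximal operator or, equivalent in spirit, the weighted maximal inequality for $A_p$-weights, whose scalar core is the Fefferman--Stein inequality $\int\M h\cdot w\lsm\int h\cdot\M w$. Everything else in the argument --- duality, H\"older, Chebyshev, Marcinkiewicz interpolation, and extrapolation --- is soft.
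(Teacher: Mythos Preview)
Your argument is correct and is essentially the standard Fefferman--Stein proof; there is no genuine gap. Note, however, that the paper does not actually prove this lemma --- it is simply cited from \cite{Stein93book} as a known result --- so there is no ``paper's own proof'' to compare against. Your writeup recovers exactly the classical argument (trivial $p=2$, duality for $p>2$ via the scalar Fefferman--Stein inequality $\int(\M h)\phi\lsm\int h\,\M\phi$, and weak-$(1,1)$ plus Marcinkiewicz or extrapolation for $p<2$), which is what one finds in Stein's book.
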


\begin{lem}[Square function's estimates, \cite{Stein93book}]\label{lem:square-function}
	Let $1<p<\I$ and $f\in L_x^p(\R^d)$. Then, we have
	\EQ{
		\norm{f_N}_{L_x^p l_{N}^2(\R^d\times2^\Z)}\sim_p \norm{f}_{L_x^p(\R^d)}.
	}
\end{lem}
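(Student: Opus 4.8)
This is the classical Littlewood--Paley square function characterization of $L^p$, so the plan is to reproduce the standard two-sided argument built on randomization, Khintchine's inequality, and the H\"ormander--Mikhlin multiplier theorem (the latter being precisely what is cited from \cite{Stein93book}). For the upper bound $\norm{f_N}_{L_x^p l_N^2}\lsm_p\norm{f}_{L_x^p}$ I would fix a sequence $\ep=(\ep_N)_{N\in2^\Z}$ of independent Rademacher signs and consider the Fourier multiplier with symbol $m_\ep(\xi):=\sum_N\ep_N\ph_N(|\xi|)$. Since the functions $\xi\mapsto\ph_N(|\xi|)$ have finitely overlapping supports and obey $|\partial^\al\ph_N(|\xi|)|\lsm_\al|\xi|^{-|\al|}$ uniformly in $N$, the symbol $m_\ep$ satisfies the H\"ormander--Mikhlin condition $|\partial^\al m_\ep(\xi)|\lsm_\al|\xi|^{-|\al|}$ with bounds \emph{independent of} $\ep$; hence the associated operator $T_\ep$, which acts by $T_\ep f=\sum_N\ep_N f_N$, is bounded on $L_x^p$ for every $1<p<\I$, uniformly in $\ep$. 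Raising to the $p$-th power, integrating in $x$, and applying Khintchine's inequality pointwise in $x$,
\EQ{
\int_{\R^d}\brkb{\sum_N|f_N(x)|^2}^{p/2}\dx & \sim_p \mathbb{E}_\ep\int_{\R^d}\absb{\sum_N\ep_N f_N(x)}^p\dx \\
& = \mathbb{E}_\ep\norm{T_\ep f}_{L_x^p}^p \lsm_p \norm{f}_{L_x^p}^p,
}
which is the claimed bound. (Equivalently, this half is the $\ell^2$-valued Calder\'on--Zygmund estimate for the kernel family $(\F^{-1}\ph_N(|\cdot|))_N$.)

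For the reverse inequality $\norm{f}_{L_x^p}\lsm_p\norm{f_N}_{L_x^p l_N^2}$ I would argue by duality. Introduce fattened projectors $\wt P_N$ with symbol $\psi_N(|\xi|)$, where $\psi_N$ is a smooth bump equal to $1$ on $\supp\ph_N$ and supported in a slightly enlarged annulus, so that $P_N=\wt P_N P_N$. Since $\sum_N\ph_N\equiv1$ and, by the first part, $\sum_N P_N f$ converges to $f$ in $L_x^p$, for every $g\in L_x^{p'}$ with $\norm{g}_{L_x^{p'}}\le1$ we may write $\jb{f,g}=\sum_N\jb{P_N f,\wt P_N g}$ (moving $\wt P_N$ onto $g$ by self-adjointness), whence Cauchy--Schwarz in $N$ followed by H\"older in $x$ yield
\EQ{
\absb{\jb{f,g}} = \absb{\sum_N\jb{P_N f,\wt P_N g}} & \le \int_{\R^d}\brkb{\sum_N|P_N f|^2}^{1/2}\brkb{\sum_N|\wt P_N g|^2}^{1/2}\dx \\
& \le \norm{f_N}_{L_x^p l_N^2}\,\normb{(\wt P_N g)_N}_{L_x^{p'}l_N^2}.
}
The family $(\wt P_N)$ satisfies the same finite-overlap and scaling bounds as $(P_N)$, so applying the first part to it, now on $L_x^{p'}$ with $1<p'<\I$, bounds the last factor by a constant times $\norm{g}_{L_x^{p'}}\le1$; taking the supremum over such $g$ finishes the proof.

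The only genuine analytic input is the H\"ormander--Mikhlin multiplier theorem---equivalently, the $L^p$-boundedness of the $\ell^2$-valued Littlewood--Paley square function---together with Khintchine's inequality; both are standard and are exactly the content cited from \cite{Stein93book}, so beyond invoking them I do not expect a real obstacle. The points worth a line of care are: the \emph{uniformity in $\ep$} of the multiplier bounds, which is immediate since at most finitely many $\ph_N$ overlap at a given $\xi$ while each obeys the scale-invariant derivative estimates; the handling of the infinite sums $\sum_N\ep_N f_N$ and $\sum_N P_N f$ by the usual truncation together with the first bound (which also supplies the $L_x^p$-convergence needed in the duality step); and the bookkeeping that, under the paper's convention that $\ph_N\equiv0$ for $N<1$, the sum $\sum_N\ph_N$ telescopes to $1$, so the decomposition is effectively the inhomogeneous one and no low-frequency endpoint issue arises.
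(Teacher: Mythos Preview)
The paper does not prove this lemma at all; it merely states it with a citation to \cite{Stein93book} and moves on. Your proposal supplies the standard proof (H\"ormander--Mikhlin plus Khintchine for the upper bound, duality with fattened projectors for the lower bound), which is correct and is exactly the argument one finds in the cited reference, so there is nothing substantive to compare.

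One small bookkeeping point: the paper's setup is slightly inconsistent---it announces a ``homogeneous'' decomposition over $N\in 2^\Z$ but then only defines $\ph_1:=\phi$ and $\ph_N$ for $N\ge 2$, which is effectively inhomogeneous. Your closing remark already anticipates this, and either reading is covered by the same argument, so this does not affect the validity of your proof.
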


\begin{remark} \label{rem:square-function-spacetime}
	Combining Minkowski's inequality and Lemma \ref{lem:square-function}, we  have that for any spacetime function $F(t,x)$, $1\le q,r,p\le 2$,
	\EQ{
		\norm{P_NF}_{l_N^2L_t^{q} L_x^r(2^\Z\times\R\times\R^d)} \lsm \norm{F}_{L_t^{q,p} L_x^r(\R\times\R^d)},
	}
	and
	\EQ{
		\norm{P_NF}_{l_N^2L_t^{q,p} L_x^r(2^\Z\times\R\times\R^d)} \lsm \norm{F}_{L_t^{q,p} L_x^r(\R\times\R^d)}.
	}
\end{remark}
\begin{lem}[Schur's test]\label{lem:schurtest}
For any $a>0$, let sequences $\fbrk{a_N}$,  $\fbrk{b_N}\in l_{N\in2^\Z}^2$, then we have
\EQ{
\sum_{N_1\loe N} \brkb{\frac{N_1}{N}}^a a_N b_{N_1} \lsm \norm{a_N}_{l_N^2} \norm{b_N}_{l_N^2}.
}
\end{lem}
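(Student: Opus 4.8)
The plan is to prove this dyadic Schur test by directly running the Cauchy--Schwarz argument that underlies the abstract Schur lemma; an equivalent route would be to verify the uniform row-sum and column-sum bounds for the kernel $K(N,N_1):=\brko{N_1/N}^a\cha_{\fbrk{N_1\loe N}}$ and then quote the abstract version, but the hands-on version is just as short. Replacing $a_N,b_{N_1}$ by $\abso{a_N},\abso{b_{N_1}}$, we may assume that all entries are nonnegative.

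The engine of the argument is a single geometric-series estimate. Writing $N=2^j$ and $N_1=2^k$ with $j,k\in\Z$, one has $\brko{N_1/N}^a=2^{-a(j-k)}$, so for each fixed $N$,
\EQ{
\sum_{N_1:N_1\loe N}\brkb{\frac{N_1}{N}}^a = \sum_{m\goe0}2^{-am} = \frac{1}{1-2^{-a}}=:C_a<\I,
}
and, symmetrically, for each fixed $N_1$,
\EQ{
\sum_{N:N\goe N_1}\brkb{\frac{N_1}{N}}^a = \sum_{m\goe0}2^{-am} = C_a .
}
The finiteness of $C_a$ is exactly where the hypothesis $a>0$ enters, and $C_a$ does not depend on the sequences. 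Then I would split the weight as $\brko{N_1/N}^a=\brko{N_1/N}^{a/2}\,\brko{N_1/N}^{a/2}$ and apply Cauchy--Schwarz in the double index $\fbrk{(N,N_1):N_1\loe N}$:
\EQ{
\sum_{N_1\loe N}\brkb{\frac{N_1}{N}}^a a_N b_{N_1} \loe \brkbb{\sum_{N_1\loe N}\brkb{\frac{N_1}{N}}^a a_N^2}^{1/2}\brkbb{\sum_{N_1\loe N}\brkb{\frac{N_1}{N}}^a b_{N_1}^2}^{1/2}.
}
In the first factor I would sum first over $N_1$, using the row-sum bound, to get $\loe C_a\sum_N a_N^2=C_a\normo{a_N}_{l_N^2}^2$; in the second factor I would sum first over $N$, using the column-sum bound, to get $\loe C_a\normo{b_N}_{l_N^2}^2$. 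Multiplying the two square roots yields the claim with implicit constant $C_a=(1-2^{-a})^{-1}$.

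There is essentially no obstacle here. The only points that need care are that the two geometric series converge --- which is precisely the content of the hypothesis $a>0$, and which is what makes the implied constant uniform over $\fbrk{a_N},\fbrk{b_N}\in l_N^2$ --- and that in the Cauchy--Schwarz step the two inner summations are carried out in the correct order, so that each collapses to exactly one of the two one-variable bounds displayed above.
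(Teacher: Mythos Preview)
Your proof is correct and is the standard Cauchy--Schwarz argument for the dyadic Schur test. The paper itself states this lemma without proof, treating it as a standard preliminary fact, so there is nothing to compare against; your write-up would serve as a complete justification.
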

\begin{lem}[Hardy's inequality]\label{lem:hardy}
	For $1<p<d$, we have that
	\EQ{
		\normb{|x|^{-1}u}_{L_x^p(\R^d)}\lsm \norm{\nabla u}_{L_x^p(\R^d)}.
	}
\end{lem}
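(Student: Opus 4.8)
The plan is to prove the estimate first for $u\in C_c^\infty(\R^d)$ and then pass to the general case $\nabla u\in L_x^p(\R^d)$ by density. The whole argument rests on one elementary identity: the radial vector field $x|x|^{-p}$ has divergence $(d-p)|x|^{-p}$ on $\R^d\setminus\fbrk{0}$, so that $|x|^{-p}$ is (up to the positive constant $d-p$) an exact divergence. The hypothesis $1<p<d$ enters in two places: it makes $d-p>0$, and it makes $|x|^{-p}$ locally integrable, so that $\normb{|x|^{-1}u}_{L_x^p(\R^d)}<\I$ automatically for $u\in C_c^\infty(\R^d)$, which will legitimize an absorption step at the end.

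Fix $u\in C_c^\infty(\R^d)$. For $\ep>0$, integrate by parts on the region $\fbrk{|x|>\ep}$ (there is no boundary at infinity since $u$ is compactly supported):
\EQ{
(d-p)\int_{|x|>\ep}|x|^{-p}|u|^p\dx = \int_{|x|>\ep}\brko{\nabla\cdot\brko{x|x|^{-p}}}|u|^p\dx = -\int_{|x|=\ep}|x|^{1-p}|u|^p\,d\sigma-\int_{|x|>\ep}x|x|^{-p}\cdot\nabla\brko{|u|^p}\dx.
}
The spherical integral is bounded in absolute value by a constant times $\ep^{1-p}\ep^{d-1}\norm{u}_{L_x^\I}^p=\ep^{d-p}\norm{u}_{L_x^\I}^p$, which tends to $0$ as $\ep\to0$ because $p<d$; letting $\ep\to0$ (and using monotone convergence on the left, together with the local integrability of $|x|^{-p}$) yields $(d-p)\int|x|^{-p}|u|^p\dx = -\int x|x|^{-p}\cdot\nabla(|u|^p)\dx$. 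Using the pointwise bound $\absb{\nabla(|u|^p)}\le p|u|^{p-1}|\nabla u|$ (valid since $\nabla|u|^p = p|u|^{p-2}\re(\wb u\,\nabla u)$ away from $\fbrk{u=0}$, and both sides vanish a.e.\ on $\fbrk{u=0}$) and $|x|^{-p}|x| = |x|^{1-p}$, we get
\EQ{
(d-p)\int_{\R^d}|x|^{-p}|u|^p\dx \le p\int_{\R^d}\brko{|x|^{-1}|u|}^{p-1}|\nabla u|\dx \le p\,\normb{|x|^{-1}u}_{L_x^p(\R^d)}^{p-1}\norm{\nabla u}_{L_x^p(\R^d)},
}
the last step being H\"older's inequality with exponents $\tfrac{p}{p-1}$ and $p$. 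Since $\normb{|x|^{-1}u}_{L_x^p(\R^d)}$ is finite, we may divide by $\normb{|x|^{-1}u}_{L_x^p(\R^d)}^{p-1}$ to obtain $\normb{|x|^{-1}u}_{L_x^p(\R^d)}\le \tfrac{p}{d-p}\norm{\nabla u}_{L_x^p(\R^d)}$, which is the claimed inequality (with an explicit constant).

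For general $u$ with $\nabla u\in L_x^p(\R^d)$, pick $u_n\in C_c^\infty(\R^d)$ with $\nabla u_n\to\nabla u$ in $L_x^p$; by the Sobolev embedding $\dot W^{1,p}(\R^d)\hookrightarrow L_x^{dp/(d-p)}(\R^d)$ we may also arrange $u_n\to u$ a.e.\ along a subsequence, and Fatou's lemma combined with the bound just proved for each $u_n$ gives the result. The only genuinely delicate point in the argument is the justification of the integration by parts at the origin, i.e.\ the vanishing of the spherical boundary term, which is exactly where $p<d$ is used; if one prefers to sidestep it, one can instead run the identical computation with the smooth weight $(|x|^2+\de^2)^{-p/2}$ in place of $|x|^{-p}$ (whose divergence structure is analogous, with no singularity), and then let $\de\to0$ by monotone convergence.
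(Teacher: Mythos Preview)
Your proof is correct and is the classical integration-by-parts argument for Hardy's inequality; the paper does not actually give a proof of this lemma, treating it as a standard quoted result. Since your argument is self-contained and recovers the sharp constant $\tfrac{p}{d-p}$, there is nothing to add.
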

\begin{lem}[Gagliardo-Nirenberg's inequality]\label{lem:GN}
	Let $d\goe 1$, $0<\th<1$, $0<s_1<s_2$ and $1<p_1,p_2,p_3\loe\I$. Suppose that $\rev{p_1}= \frac\th{p_2}+\frac{1-\th}{p_3}$ and $s_1=\th s_2$. Then, we have
	\EQn{\label{eq:GN}
		\norm{\abs{\nabla}^{s_1}u}_{L_x^{p_1}(\R^d)} \lsm \norm{\abs{\nabla}^{s_2}u}_{L_x^{p_2}(\R^d)}^\th \norm{u}_{L_x^{p_3}(\R^d)}^{1-\th}.
	}
\end{lem}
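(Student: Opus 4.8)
The plan is to establish \eqref{eq:GN} by a homogeneous Littlewood--Paley decomposition combined with Bernstein's inequality, summing the dyadic pieces after splitting the frequency scale at an optimally chosen threshold. As a preliminary I would record that \eqref{eq:GN} is invariant under the dilation $u\mapsto u(\la\,\cdot)$: the power of $\la$ on the left is $s_1-\frac d{p_1}$ and on the right it is $\th\brko{s_2-\frac d{p_2}}-(1-\th)\frac d{p_3}$, and these coincide precisely because $s_1=\th s_2$ and $\rev{p_1}=\frac\th{p_2}+\frac{1-\th}{p_3}$. Writing $A:=\norm{\abs{\nabla}^{s_2}u}_{L_x^{p_2}}$ and $B:=\norm{u}_{L_x^{p_3}}$, we may assume $0<A,B<\I$, the remaining cases being trivial. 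Decomposing $u=\sum_{N\in2^\Z}P_Nu$ into homogeneous dyadic blocks and using that the Littlewood--Paley projections are bounded on every $L_x^p$, $1\loe p\loe\I$ (convolution with uniformly $L^1$ kernels), together with Bernstein's inequality, gives
\[
\norm{\abs{\nabla}^{s_1}u}_{L_x^{p_1}}\loe\sum_{N\in2^\Z}\norm{\abs{\nabla}^{s_1}P_Nu}_{L_x^{p_1}}\lsm\sum_{N\in2^\Z}N^{s_1}\norm{P_Nu}_{L_x^{p_1}}.
\]

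The core of the argument is to bound a single block $\norm{P_Nu}_{L_x^{p_1}}$ in two complementary ways. On one hand, $P_Nu=P_N\abs{\nabla}^{-s_2}\brko{\abs{\nabla}^{s_2}u}$ gives $\norm{P_Nu}_{L_x^{p_2}}\lsm N^{-s_2}A$, and a suitable Bernstein embedding between Lebesgue exponents (or an interpolation through an $L_x^\I$ endpoint of the block, whichever the configuration of $p_1,p_2,p_3$ requires) transfers this bound into $L_x^{p_1}$ while retaining a genuine negative power of $N$. On the other hand, interpolating $\norm{P_Nu}_{L_x^{p_1}}$ between $\norm{P_Nu}_{L_x^{p_3}}\lsm B$ and a Bernstein-improved bound on a higher-integrability norm of $P_Nu$ produces a bound carrying a genuine positive power of $N$ on the factor $B$. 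The reason the naive approach fails --- and the essential point --- is that since $s_1=\th s_2$, the inequality \eqref{eq:GN} is scaling-critical: the balanced H\"older interpolation $\norm{P_Nu}_{L_x^{p_1}}\loe\norm{P_Nu}_{L_x^{p_2}}^\th\norm{P_Nu}_{L_x^{p_3}}^{1-\th}\lsm N^{-s_1}A^\th B^{1-\th}$ makes $N^{s_1}\norm{P_Nu}_{L_x^{p_1}}$ \emph{exactly} independent of $N$, so it does not sum, and one is forced to perturb off it in both directions.

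Granting two such block bounds, say $N^{s_1}\norm{P_Nu}_{L_x^{p_1}}\lsm N^{-a}A^\al B^{1-\al}$ and $N^{s_1}\norm{P_Nu}_{L_x^{p_1}}\lsm N^{b}A^\be B^{1-\be}$ with $a,b>0$, I would cut the frequency sum at a dyadic threshold $M$: the range $N\goe M$ contributes $\lsm M^{-a}A^\al B^{1-\al}$ and the range $N\loe M$ contributes $\lsm M^{b}A^\be B^{1-\be}$, each a convergent geometric series. Taking $M=(A/B)^{1/s_2}$ equalises the two; moreover, since each block bound is itself dilation-consistent, scaling invariance forces $\al-a/s_2=\th=\be+b/s_2$, so both contributions collapse to precisely $A^\th B^{1-\th}$, which is \eqref{eq:GN}. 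In other words, once the two unbalanced block bounds are in hand, the optimisation in $M$ is automatic.

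I expect the main obstacle to be exactly the production of those two block bounds with a power of $N$ of the correct sign on each side of $M$: this forces one to distinguish the orderings $p_2<p_3$, $p_2>p_3$, $p_2=p_3$ of the integrability exponents (equivalently whether $p_1$ lies below, above, or equal to $p_2$) and to choose in each case the appropriate combination of Bernstein's inequality and H\"older interpolation. The case $p_2=p_3$ (hence $p_1=p_2=p_3$) is the cleanest, where one may simply use $N^{s_1}\norm{P_Nu}_{L_x^{p_1}}\lsm\min\brko{N^{s_1}B,\,N^{s_1-s_2}A}$ and split at $M=(A/B)^{1/s_2}$ directly. As a less elementary but configuration-free alternative, \eqref{eq:GN} also follows from the complex interpolation identity $[L_x^{p_3},\dot W_x^{s_2,p_2}]_\th=\dot W_x^{s_1,p_1}$ together with the standard interpolation inequality $\norm{f}_{[X_0,X_1]_\th}\lsm\norm{f}_{X_0}^{1-\th}\norm{f}_{X_1}^\th$, at the cost of setting up complex interpolation of homogeneous Sobolev spaces.
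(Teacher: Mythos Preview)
The paper states this lemma without proof as a standard preliminary, so there is no argument to compare against; I comment only on the soundness of your sketch. Your complex-interpolation alternative at the end is valid and is in fact the cleanest configuration-free route.

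Your main Littlewood--Paley argument, however, has a genuine gap that is not merely a technical obstacle. By passing through the triangle inequality at the outset you are committing to bound the stronger Besov-type quantity $\sum_{N} N^{s_1}\norm{P_Nu}_{L_x^{p_1}}$ by $A^\th B^{1-\th}$, and this is in general \emph{false} when $p_2\ne p_3$. Take $d=3$, $(p_1,p_2,p_3)=(3,6,2)$, $(s_1,s_2)=(\tfrac12,1)$, $\th=\tfrac12$. The only information available on a block is $\norm{P_Nu}_{L_x^6}\lsm N^{-1}A$ and $\norm{P_Nu}_{L_x^2}\lsm B$; Bernstein only raises the Lebesgue exponent, and every admissible H\"older interpolation to $L_x^3$ yields $N^{1/2}\norm{P_Nu}_{L_x^3}\lsm N^{1-2\ga}A^\ga B^{1-\ga}$ subject to $\ga\loe\tfrac12$, so the power of $N$ is never strictly negative and the high-frequency tail cannot be summed. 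One can moreover build an explicit lacunary $u$ (well-separated wave packets at frequencies $2^k$, $1\loe k\loe K$, normalised so that $2^{k/2}\norm{P_{2^k}u}_{L_x^3}\sim k^{-1}$) with $A,B\lsm 1$ but $\sum_N N^{1/2}\norm{P_Nu}_{L_x^3}\sim\log K\to\I$. Thus the ``obstacle'' you anticipate is actually fatal to the scheme in this regime.

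The remedy is to avoid the $\ell_N^1$ loss and use instead the square-function characterisation $\norm{\abs{\nabla}^{s}u}_{L_x^{p}}\sim\normb{\brkb{\sum_N N^{2s}\abs{P_Nu}^2}^{1/2}}_{L_x^{p}}$, valid for $1<p<\I$. Pointwise H\"older in $N$ gives
\[
\brkb{\sum_N N^{2s_1}\abs{P_Nu}^2}^{1/2}\loe\brkb{\sum_N N^{2s_2}\abs{P_Nu}^2}^{\th/2}\brkb{\sum_N\abs{P_Nu}^2}^{(1-\th)/2},
\]
after which H\"older in $x$ with exponents $p_2/\th$, $p_3/(1-\th)$ and the square-function bounds on the right yield \eqref{eq:GN} for $1<p_1,p_2,p_3<\I$ with no case analysis and no threshold. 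Your split-at-$M$ scheme is sound in the diagonal case $p_1=p_2=p_3$, where it is essentially real interpolation of the one-parameter Sobolev scale, but it does not extend to the general statement.
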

\begin{lem}[Radial Sobolev's inequality, \cite{TVZ07Duke}]\label{lem:radial-sobolev}
	Let $1\loe p,q\loe+\I$, $d\goe 1$, $0<s<d$, and $\al,\be\in\R$ satisfy
	\EQ{
		\al>-\frac{d}{p'}\text{, }\be>-\frac{d}{q'}\text{, }1\loe\rev p+\rev q\loe 1+s,
	}
	and the scaling condition
	\EQ{
		\al+\be-d+s=-\frac{d}{p'}-\frac{d}{q'}
	}
	with at most one of the equalities
	\EQ{
		p=1\text{, }p=\I\text{, }q=1\text{, }q=\I\text{, and }\rev p+\rev q=1+s
	}
	holding. Then, for radial $u:\R^d\ra\C$, we have
	\EQ{
		\normb{|x|^\be u}_{L_x^{q'}(\R^d)}\lsm_{\al,\be,p,q,s}\normb{|x|^{-\al}\abs{\nabla}^s u}_{L_x^{p}(\R^d)}.
	}
\end{lem}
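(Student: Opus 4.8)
The plan is to reduce the estimate to a one-dimensional weighted inequality via the Riesz-potential representation, using radiality crucially. By density it suffices to treat radial Schwartz $u$, both sides then being finite. Set $v:=\abs{\nabla}^s u$; since $\abs{\nabla}^s$ commutes with rotations, $v$ is again radial and $u=I_sv$, the Riesz potential of order $s$, i.e. $u(x)=c_{d,s}\int_{\R^d}|x-y|^{-(d-s)}v(y)\,\dd y$. Averaging over spheres, on radial functions this becomes
\EQ{
u(r)=\int_0^\I k(r,\rho)\,v(\rho)\,\rho^{d-1}\,\dd\rho,\qquad k(r,\rho):=c_{d,s}\int_{S^{d-1}}\absb{re_1-\rho\om}^{-(d-s)}\,\dd\si(\om),
}
with $k$ symmetric and homogeneous of degree $-(d-s)$. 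Writing $f:=|x|^{-\al}v$, the claim becomes the radial Stein--Weiss estimate $\normb{|x|^\be I_s(|x|^\al f)}_{L_x^{q'}}\lsm\norm{f}_{L_x^p}$, whose radial form is valid on a strictly wider weight range than the general one.

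The second step is to record sharp pointwise bounds for $k$. With $\ta:=\min(r,\rho)/\max(r,\rho)\in(0,1]$ one expects $k(r,\rho)\sim\max(r,\rho)^{-(d-s)}\ka(\ta)$, where $\ka$ is bounded for $\ta$ away from $1$, and near $\ta=1$ behaves like $(1-\ta)^{s-1}$ when $0<s<1$, like $\log\frac1{1-\ta}$ when $s=1$, and is bounded when $1<s<d$---the angular integral converging or not across the near-antipodal singularity according to whether $d-s<d-1$. Passing to the logarithmic variables and regarding $g:=r^{\be+d/q'}u(r)$ and $h:=\rho^{-\al+d/p'}v(\rho)$ as functions of $\log r$ and $\log\rho$, the scaling identity $\al+\be-d+s=-d/p'-d/q'$ forces the transformed kernel to be homogeneous of degree $-1$ in $(r,\rho)$, hence translation-invariant (convolution type) in the logarithmic variable; the desired bound reduces to $\norm{g}_{L^{q'}(\R)}\lsm\norm{h}_{L^p(\R)}$ for an explicit integral operator.

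Finally I would invoke the one-dimensional Hardy--Littlewood--P\'olya principle: an integral operator with nonnegative kernel homogeneous of degree $-1$ maps $L^p(\R)\to L^{q'}(\R)$ provided $p\le q'$, which is exactly $1\le\rev p+\rev q$, together with the convergence of $\int_0^\I K(1,\rho)\rho^{-1/p}\,\dd\rho$ and of the dual integral; the genuine endpoints ($p=1$, or $q=1$, or $q'=\I$, or $\rev p+\rev q=1+s$) are handled through the corresponding weak-type estimates and Marcinkiewicz interpolation. One then checks that, given the diagonal behaviour of $\ka$ above, finiteness of these kernel integrals is equivalent to exactly the hypotheses $\al>-d/p'$, $\be>-d/q'$, $\rev p+\rev q\le1+s$, and that ``at most one of the five displayed equalities holds'' is precisely the non-degeneracy preventing the endpoint integrals from diverging simultaneously; tracking the constants then gives the stated dependence $\lsm_{\al,\be,p,q,s}$.

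The main obstacle I anticipate is the second step: pinning down the sharp near-diagonal behaviour of the spherical average $k(r,\rho)$ when $0<s\le1$---where $|x-y|^{-(d-s)}$ is only marginally locally integrable---and then matching the resulting one-dimensional integrability conditions to the \emph{exact} parameter range in the statement, in particular treating all the borderline cases correctly. An alternative, proving the integer cases $s\in\N$ directly by the fundamental theorem of calculus in the radial variable and interpolating in $s$, looks harder to carry out cleanly because the weights $|x|^\al,|x|^\be$ make interpolation in the order $s$ awkward, so I would keep to the Riesz-potential argument.
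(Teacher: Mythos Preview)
The paper does not prove this lemma; it is stated as a citation from \cite{TVZ07Duke} and used as a black box throughout (e.g.\ in \eqref{eq:nonl-esti-nabla1-2-high-1} and \eqref{eq:energy-deltaw-others-nablav-w-2}). There is therefore no ``paper's own proof'' to compare against.

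That said, your outline is essentially the standard route to the radial Stein--Weiss inequality and is the one taken in the cited reference: represent $u=I_s(|\nabla|^s u)$, average the Riesz kernel over spheres to obtain a one-dimensional positive kernel $k(r,\rho)$ homogeneous of degree $-(d-s)$, absorb the weights using the scaling relation to reduce to a kernel homogeneous of degree $-1$ in $(r,\rho)$, and then apply a Schur/Hardy--Littlewood--P\'olya test. Your identification of the near-diagonal asymptotics of $k$ (power $(1-\ta)^{s-1}$ for $0<s<1$, logarithm at $s=1$, bounded for $1<s<d$) is correct and is exactly what drives the condition $\rev p+\rev q\le 1+s$, while the decay at $\ta\to 0$ together with the weight exponents yields $\al>-d/p'$ and $\be>-d/q'$. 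The endpoint bookkeeping (``at most one equality'') is indeed the delicate part, handled in \cite{TVZ07Duke} by interpolation between open cases; your plan to use weak-type bounds plus Marcinkiewicz is an acceptable alternative. No gap in the strategy.
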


\begin{lem}[Fractional chain rule, \cite{CW01JFA}]\label{lem:Frac_chain}
Let $G\in C^1(\C)$ such that $G(0)=0$, and there exists $\mu\in L^1([0,1])$ and $H\in C(\C)$ such that for any $0\le\ta\le 1$, $f,g\in\C$,
\EQ{
|G'(\ta f +(1-\ta)g)| \le \mu(\ta) (H(f) + H(g)).
}
Suppose that $s\in(0,1]$, and $1<p,p_1,p_2<\I$ are such that $\rev p=\rev{p_1}+\rev{p_2}$. Then,
\EQ{
\norm{|\nabla|^sG(u)}_{L_x^p}\lsm\norm{H(u)}_{L_x^{p_1}}\norm{|\nabla|^su}_{L_x^{p_2}}.
}
\end{lem}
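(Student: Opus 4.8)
The plan is to follow the classical difference-quotient (Christ--Weinstein) scheme. The case $s=1$ is elementary: taking $f=g=w$ in the structural hypothesis forces the pointwise bound $|G'(w)|\lsm H(w)$, so the ordinary chain rule gives $|\nabla G(u)|\lsm H(u)\,|\nabla u|$ a.e., and H\"older's inequality with $\rev p=\rev{p_1}+\rev{p_2}$ closes the estimate. Hence from now on I would assume $0<s<1$.

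For fractional $s$, the starting point is the classical pointwise square-function characterization of the homogeneous Sobolev norm: for $1<p<\I$,
\EQ{
\norm{|\nabla|^s f}_{L_x^p(\R^d)}\sim \norm{\mathfrak{D}_s f}_{L_x^p(\R^d)},\qquad \mathfrak{D}_s f(x):=\brk{\int_{\R^d}\frac{|f(x+y)-f(x)|^2}{|y|^{d+2s}}\,\dd y}^{\frac12},
}
equivalently the Littlewood--Paley square-function description of $\dot H^s_p$ (cf.\ Lemma~\ref{lem:square-function}), using the mean-zero property of the homogeneous projection kernels. The key pointwise step is then: since $G\in C^1$ with $G(0)=0$, writing $G(u(x+y))-G(u(x))=\int_0^1 G'\brko{\ta u(x+y)+(1-\ta)u(x)}\,\dd\ta\cdot\brko{u(x+y)-u(x)}$ and invoking the hypothesis yields
\EQ{
|G(u(x+y))-G(u(x))|\le \norm{\mu}_{L^1([0,1])}\brkb{H(u(x))+H(u(x+y))}\,|u(x+y)-u(x)|.
}
Feeding this into $\mathfrak{D}_s(G(u))$ and using $(a+b)^2\le 2a^2+2b^2$, one obtains the pointwise splitting $\mathfrak{D}_s(G(u))\lsm\mathrm{I}+\mathrm{II}$, where $\mathrm{I}(x):=H(u(x))\,\mathfrak{D}_s u(x)$ and $\mathrm{II}(x)$ is defined exactly like $\mathfrak{D}_s u(x)$ but with the extra weight $H(u(x+y))^2$ inserted in the integrand.

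The diagonal piece is immediate: $\norm{\mathrm{I}}_{L_x^p}\le\norm{H(u)}_{L_x^{p_1}}\norm{\mathfrak{D}_s u}_{L_x^{p_2}}\lsm\norm{H(u)}_{L_x^{p_1}}\norm{|\nabla|^s u}_{L_x^{p_2}}$ by H\"older and the characterization above. The substantive work is the cross term $\mathrm{II}$. I would change variables $z=x+y$, decompose the region $\fbrk{|z-x|\sim2^j}$ dyadically in $j\in\Z$, and on each dyadic shell apply a generalized H\"older inequality to detach the shifted factor $H(u(z))^2$ — bounding its shell-average by a Hardy--Littlewood maximal function $\M\brko{H(u)^{q}}(x)$ of $H(u)$ at $x$, which is controlled in $L_x^{p_1}$ by the Fefferman--Stein vector-valued inequality (Lemma~\ref{lem:HL-boundedness}) — from the difference factor $|u(z)-u(x)|^2$, whose dyadic sum against $|y|^{-d-2s}$ reassembles (a variant of) $\mathfrak{D}_s u$ and is therefore bounded in $L_x^{p_2}$ by $\norm{|\nabla|^s u}_{L_x^{p_2}}$. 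Collecting the two pieces gives $\norm{|\nabla|^s G(u)}_{L_x^p}\sim\norm{\mathfrak{D}_s(G(u))}_{L_x^p}\le\norm{\mathrm{I}}_{L_x^p}+\norm{\mathrm{II}}_{L_x^p}\lsm\norm{H(u)}_{L_x^{p_1}}\norm{|\nabla|^s u}_{L_x^{p_2}}$, first for Schwartz $u$ and then for general $u$ by a routine density argument.

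I expect the main obstacle to be precisely the cross term $\mathrm{II}$: one must peel off $H(u(x+y))$, evaluated at the shifted point, from $|u(x+y)-u(x)|$ without losing any derivative, which is where a purely ``algebraic'' Leibniz-type splitting fails and the maximal-function machinery becomes necessary. The only delicate bookkeeping is that a naive Cauchy--Schwarz on each shell produces an $L^2$-averaged maximal function of $u$, whose $L_x^p$ mapping is transparent only for $p>2$; to cover the stated full range $1<p,p_1,p_2<\I$ one instead uses an $L^q$-average with $q$ tuned so that both the maximal function of $H(u)$ stays in $L_x^{p_1}$ and the $L^q$-sharp-maximal-function (DeVore--Sharpley/Calder\'on) description of $\dot W^{s,p}$ applies to the $u$-factor, which it does since $\dot W^{s,p_2}\hra L^{p_2^{*}}$ with $p_2^{*}>q$ for the relevant exponents. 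All remaining steps — the dyadic summation, the convolution-with-an-approximate-identity bounds, and the density argument — are routine.
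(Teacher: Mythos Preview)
Your argument follows the difference-quotient (Strichartz/Gagliardo) route, whereas the paper does not prove this lemma at all: it is quoted from \cite{CW01JFA}, and the only proof-sketch the paper gives (inside the proof of Lemma~\ref{lem:Frac_chain-modify}) is the Littlewood--Paley pointwise inequality
\EQ{
\norm{2^{js}P_jG(u)}_{l_j^2}\lsm \M(H\circ u)(x)\,\norm{\M(2^{ks}P_ku)(x)}_{l_k^2}+\norm{\M(2^{ks}P_ku\cdot H\circ u)(x)}_{l_k^2},
}
after which H\"older and the Fefferman--Stein inequality (Lemma~\ref{lem:HL-boundedness}) finish immediately. So the two approaches differ at the very first step: the paper works with frequency-localized pieces $P_jG(u)$ and never sees a ``shifted'' factor $H(u(x+y))$, while you work with first differences and are forced to confront exactly that shifted factor in your term $\mathrm{II}$.

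Both routes are valid, but they trade difficulties. The Littlewood--Paley approach buys you a one-line endgame once the pointwise square-function bound is in hand; the cost is proving that pointwise bound, which uses the mean-zero kernel of $P_j$ and a telescoping of $G(u_{\le k})-G(u_{\le k-1})$. Your approach has a transparent term $\mathrm{I}$ but pushes all the work into $\mathrm{II}$, where --- as you correctly flag --- a naive Cauchy--Schwarz only covers $p_2>2$, and the full range requires the $L^q$-mean/sharp-maximal characterization of $\dot F^{s}_{p_2,2}$ (DeVore--Sharpley/Triebel). That last input is heavier machinery than anything the paper invokes, so if you want to stay within the toolbox actually set up in Section~\ref{sec:prel} (square function, vector-valued maximal inequality), the Littlewood--Paley proof is the more economical choice; your route is correct but imports an extra characterization theorem.
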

In this paper, we also need a technical variant of Lemma \ref{lem:Frac_chain}:
\begin{lem}[Fractional chain rule with separate inputs]\label{lem:Frac_chain-modify}
Let $G\in C^1(\C)$ be given as in Lemma \ref{lem:Frac_chain}. Suppose that $u=v+w$, $s\in(0,1]$, and $1<p,p_1,p_2,\wt p_1,\wt p_2<\I$ are such that $\rev p=\rev{p_1}+\rev{p_2} = \frac{1}{\wt p_1} + \frac{1}{\wt p_2}$. Then,
\EQ{
\norm{|\nabla|^sG(u)}_{L_x^p} \lsm \norm{H(u)}_{L_x^{p_1}}\norm{|\nabla|^sv}_{L_x^{p_2}} + \norm{H(u)}_{L_x^{\wt p_1}}\norm{|\nabla|^sw}_{L_x^{\wt p_2}}.
}
\end{lem}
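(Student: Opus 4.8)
The plan is to mirror the proof of the classical fractional chain rule Lemma \ref{lem:Frac_chain} (that is, \cite{CW01JFA}), threading an extra ``$v$ versus $w$'' bookkeeping through the argument. First I would dispose of the endpoint $s=1$ by the ordinary chain rule: since $|\nabla|=\sum_j R_j\partial_j$ with $R_j$ the Riesz transforms, $\norm{|\nabla|G(u)}_{L_x^p}\lsm\norm{\nabla G(u)}_{L_x^p}$, and $\nabla G(u)=G'(u)(\nabla v+\nabla w)$ with $|G'(u)|\le 2\norm{\mu}_{L^1}H(u)$ (take $f=g=u$ in the hypothesis and integrate in $\th$), so Hölder's inequality with the two exponent pairs closes it. From now on assume $0<s<1$, and recall the classical difference characterization (e.g.\ \cite{Stein93book}): for $1<q<\I$,
\EQ{
\norm{|\nabla|^s f}_{L_x^q(\R^d)}\sim_{d,s,q}\normB{\Big(\int_{\R^d}\frac{|f(x+y)-f(x)|^2}{|y|^{d+2s}}\,\dd y\Big)^{1/2}}_{L_x^q(\R^d)}=:\norm{D_sf}_{L_x^q(\R^d)}.
}

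The heart of the matter is a pointwise bound on $D_s(G(u))$. By the fundamental theorem of calculus $G(u)(x+y)-G(u)(x)=\big(\int_0^1 G'(\th u(x+y)+(1-\th)u(x))\,\dd\th\big)(u(x+y)-u(x))$, so the hypothesis on $G'$ gives $|G(u)(x+y)-G(u)(x)|\lsm(H(u(x+y))+H(u(x)))|u(x+y)-u(x)|$. The only genuinely new step is to expand $u(x+y)-u(x)=(v(x+y)-v(x))+(w(x+y)-w(x))$ and also split the $H$-factor, leading to
\EQ{
D_s(G(u))(x)\lsm H(u(x))\,D_sv(x)+H(u(x))\,D_sw(x)+A_v(x)+A_w(x),
}
with $A_\star(x):=\big(\int_{\R^d}|y|^{-d-2s}H(u(x+y))^2|\star(x+y)-\star(x)|^2\,\dd y\big)^{1/2}$ for $\star\in\{v,w\}$. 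The two ``diagonal'' terms are immediate: Hölder and the characterization give $\norm{H(u)\,D_sv}_{L_x^p}\lsm\norm{H(u)}_{L_x^{p_1}}\norm{|\nabla|^sv}_{L_x^{p_2}}$ and $\norm{H(u)\,D_sw}_{L_x^p}\lsm\norm{H(u)}_{L_x^{\wt p_1}}\norm{|\nabla|^sw}_{L_x^{\wt p_2}}$, where for the $w$-term I use the pair $(\wt p_1,\wt p_2)$.

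The step I expect to be the main obstacle is the ``off-diagonal'' pair $A_v,A_w$, where the weight $H(u(x+y))$ sits at the shifted point; this is precisely where the proof in \cite{CW01JFA} does real work. I would handle $A_\star$ exactly as there: decompose $|y|\sim2^k$ into dyadic annuli, bound the annular contribution of $H(u(x+y))$ by the Hardy--Littlewood maximal function $\M(H(u))(x)$, sum in $k$ (the geometric series converges since $s>0$), and then invoke Hölder together with the $L_x^p$-boundedness of $\M$. The one thing requiring care is to keep the Hölder exponents matched to the diagonal terms: $\M(H(u))$ is paired with the $v$-difference at $(p_1,p_2)$, giving $\norm{H(u)}_{L_x^{p_1}}\norm{|\nabla|^sv}_{L_x^{p_2}}$, and with the $w$-difference at $(\wt p_1,\wt p_2)$, giving $\norm{H(u)}_{L_x^{\wt p_1}}\norm{|\nabla|^sw}_{L_x^{\wt p_2}}$. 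Summing the four contributions yields the claim; since every step beyond the algebraic splitting $u=v+w$ inside the difference quotient is verbatim from \cite{CW01JFA}, the modification is routine.
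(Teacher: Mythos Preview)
Your proposal is correct and follows the same strategy as the paper: take the pointwise Christ--Weinstein inequality underlying Lemma~\ref{lem:Frac_chain}, insert the splitting $u=v+w$ linearly, and then run H\"older with the pair $(p_1,p_2)$ on the $v$-piece and $(\wt p_1,\wt p_2)$ on the $w$-piece. The only real difference is the characterisation of $\||\nabla|^s f\|_{L^p}$ you work with. You use the difference-quotient square function $D_sf$, whereas the paper stays inside the Littlewood--Paley framework: it quotes the pointwise bound
\[
\big\|2^{js}P_jG(u)\big\|_{l_j^2}\lsm \M(H\circ u)(x)\big\|\M(2^{ks}P_ku)(x)\big\|_{l_k^2}+\big\|\M\big(2^{ks}P_ku\,H\circ u\big)(x)\big\|_{l_k^2},
\]
splits $P_ku=P_kv+P_kw$, and closes with Lemma~\ref{lem:HL-boundedness} and Lemma~\ref{lem:square-function}. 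That route has the advantage that your ``off-diagonal'' term $A_\star$ appears already as a maximal function of a product, so the vector-valued maximal inequality finishes it in one line; in your formulation this is the step that needs the most care. Two small remarks on that step: first, the sentence ``bound the annular contribution of $H(u(x+y))$ by $\M(H(u))(x)$'' is not literally a pointwise inequality (a single shifted value $H(u(x+y))$ is \emph{not} dominated by $\M(H(u))(x)$), so you should say precisely which averaged quantity you are controlling; second, \cite{CW01JFA} itself argues via Littlewood--Paley rather than the $D_s$-characterisation, so ``exactly as there'' points to the paper's argument, not yours. Neither affects the validity of the approach---the off-diagonal term is standard in either framework---but the paper's LP version is the cleaner way to execute the shared idea.
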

\begin{proof}
First, recall the Hardy-Littlewood maximal operator $\M$:
\EQ{
	\M f(x):=\sup_{r>0}\frac{1}{|B(0,r)|}\int_{B(0,r)} |f(x-y)|\dy,
}
where $B(0,r):=\fbrk{x\in\R^d:|x|\loe r}$. 

Recall that the proof of classical result in Lemma \ref{lem:Frac_chain} is based on the observation: (see \cite{CW01JFA} for instances)
\EQn{\label{eq:Frac_chain-modify-ob1}
\norm{2^{js}P_jG(u)}_{L_j^2(\Z)} \lsm \M(H\circ u)(x)\norm{\M(2^{ks}P_ku)(x)}_{l_k^2(\Z)} + \norm{\M(2^{ks}P_ku H\circ u)(x)}_{l_k^2(\Z)},
}
then Lemma \ref{lem:Frac_chain} follows by \eqref{eq:Frac_chain-modify-ob1}, H\"older's inequality, and Lemma \ref{lem:HL-boundedness}.

Now, we turn to the proof of Lemma \ref{lem:Frac_chain-modify}. In fact, by \eqref{eq:Frac_chain-modify-ob1} and triangular inequality $|u|\le |v| + |w|$,
\EQn{\label{eq:Frac_chain-modify-ob2}
\norm{2^{js}P_jG(u)}_{L_j^2(\Z)} \lsm & \M(H\circ u)(x)\brkb{\norm{\M(2^{ks}P_kv)(x)}_{l_k^2(\Z)} + \norm{\M(2^{ks}P_kw)(x)}_{l_k^2(\Z)}} \\
& + \norm{\M(2^{ks}P_kv H\circ u)(x)}_{l_k^2(\Z)} + \norm{\M(2^{ks}P_kw H\circ u)(x)}_{l_k^2(\Z)},
}
then Lemma \ref{lem:Frac_chain-modify} follows by \eqref{eq:Frac_chain-modify-ob2}, H\"older's inequality, and Lemma \ref{lem:HL-boundedness}.
\end{proof}

\subsection{Linear Schr\"odinger equations}
Let $e^{\frac12it\De}u_0$ be the solution of 
\EQ{
	\left\{ \aligned
	&i\pd_t u + \frac12\De u = 0, \\
	& u(0,x) = u_0,
	\endaligned
	\right.
}
and we denote that $S(t):=e^{\frac12it\De}$. Then, we have the explicit formula
\EQn{\label{linear-explicit}
	S(t)u_0 = \frac{1}{(2\pi it)^{d/2}} \int_{\R^d} e^{i\frac{|x-y|^2}{2t}} u_0(y)\dy.
}
\begin{lem}[Strichartz estimate, \cite{KT98AJM,KTV14book}]\label{lem:strichartz}
Let $I\subset \R$ containing $0$. Suppose that $(q,r)$ and $(\wt{q},\wt{r})$ are $L_x^2$-admissible.
Then,
\EQn{\label{eq:strichartz-1}
\norm{ S(t)\ph}_{L_t^qL_x^r(\R\times\R^d)} \lsm \norm{\ph}_{L_x^2(\R^d)},
}
and
\EQn{\label{eq:strichartz-2}
\normb{\int_0^t S(t-s) F(s)\dd s}_{L_t^qL_x^r(I\times \R^d)} \lsm \norm{F}_{L_t^{\wt{q}'} L_x^{\wt{r}'}(I\times\R^d)}.
}
Moreover, for $2\loe q<\I$,
\EQn{\label{eq:strichartz-u2}
\norm{u}_{L_t^q L_x^r(I\times \R^3)} \lsm \norm{u}_{U_\De^q(I;L_x^2(\R^3))} \lsm \norm{u}_{U_\De^2(I;L_x^2(\R^3))},
}
and if we assume further $q\ne2$, then 
\EQn{\label{eq:strichartz-v2}
\norm{u}_{L_t^q L_x^r(I\times \R^3)}	 \lsm \norm{u}_{V_\De^2(I;L_x^2(\R^3))},
}
and
\EQn{\label{eq:strichartz-u2-dual}
\normb{\int_0^t S(t-s) F(s)\dd s}_{U_\De^2(I;L_x^2(\R^d))} \lsm \norm{F}_{L_t^{q'} L_x^{r'}(I\times\R^d)}.
}
\end{lem}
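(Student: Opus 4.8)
The estimates \eqref{eq:strichartz-1} and \eqref{eq:strichartz-2} are the classical homogeneous and retarded Strichartz estimates, and the plan is to deduce them from the dispersive bound in the standard way. From the explicit representation \eqref{linear-explicit} one reads off $\normb{S(t)\ph}_{L_x^\I(\R^d)}\lsm|t|^{-d/2}\normb{\ph}_{L_x^1(\R^d)}$, and interpolating this with the mass identity $\normb{S(t)\ph}_{L_x^2(\R^d)}=\normb{\ph}_{L_x^2(\R^d)}$ yields $\normb{S(t)\ph}_{L_x^r(\R^d)}\lsm|t|^{-d(\frac12-\frac1r)}\normb{\ph}_{L_x^{r'}(\R^d)}$ for $2\le r\le\I$. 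The estimate \eqref{eq:strichartz-1} then follows for non-endpoint $L_x^2$-admissible $(q,r)$ from the $TT^*$ argument and the Hardy--Littlewood--Sobolev inequality, and at the endpoint $(q,r)=(2,\frac{2d}{d-2})$ with $d\ge3$ from the bilinear interpolation argument of Keel--Tao; the retarded estimate \eqref{eq:strichartz-2} follows by $TT^*$ together with the Christ--Kiselev lemma off the double endpoint, and directly from the Keel--Tao scheme at the double endpoint. All of this is contained in \cite{KT98AJM,KTV14book}, and the plan is simply to invoke it.

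For \eqref{eq:strichartz-u2} the plan is the transference principle from the atomic structure of $U^q$. Given a $U_\De^q$-atom $a$, so that $e^{-it\De}a=\sum_{k=1}^K\mathbbm{1}_{[t_{k-1},t_k)}\phi_{k-1}$ with $\sum_{k=1}^K\normb{\phi_{k-1}}_{L_x^2}^q=1$, on each interval $[t_{k-1},t_k)$ we have $a(t)=e^{it\De}\phi_{k-1}=S(2t)\phi_{k-1}$, so after the trivial time rescaling $t\mapsto2t$ the global estimate \eqref{eq:strichartz-1} bounds $\norm{a}_{L_t^qL_x^r([t_{k-1},t_k)\times\R^3)}$ by a constant times $\normb{\phi_{k-1}}_{L_x^2}$. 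Summing over the intervals, which partition $\R$, gives for $2\le q<\I$ and $(q,r)$ $L_x^2$-admissible
\EQ{
\norm{a}_{L_t^qL_x^r(\R\times\R^3)}^q & = \sum_{k=1}^K\norm{a}_{L_t^qL_x^r([t_{k-1},t_k)\times\R^3)}^q \\
& \lsm \sum_{k=1}^K\normb{\phi_{k-1}}_{L_x^2}^q = 1.
}
For a general $u=\sum_j\la_ja_j\in U_\De^q$ the triangle inequality gives $\norm{u}_{L_t^qL_x^r}\le\sum_j|\la_j|\,\norm{a_j}_{L_t^qL_x^r}\lsm\sum_j|\la_j|$, and passing to the infimum over atomic decompositions yields $\norm{u}_{L_t^qL_x^r(\R\times\R^3)}\lsm\norm{u}_{U_\De^q(\R;L_x^2)}$; the corresponding bound on a subinterval $I$ follows by restriction and extension. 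The remaining inequality $\norm{u}_{U_\De^q}\lsm\norm{u}_{U_\De^2}$ is the continuous embedding $U^2\hra U^q$ for $2\le q$ recalled before the statement.

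For \eqref{eq:strichartz-v2}, with $2<q<\I$ and $(q,r)$ $L_x^2$-admissible, the continuous embedding $V_{rc}^2\hra U^q$ recalled before the statement, together with \eqref{eq:strichartz-u2}, gives $\norm{u}_{L_t^qL_x^r(I\times\R^3)}\lsm\norm{u}_{U_\De^q(I;L_x^2)}\lsm\norm{u}_{V_\De^2(I;L_x^2)}$. Finally, for \eqref{eq:strichartz-u2-dual} the plan is to use the duality identity \eqref{eq:upvpduality-2} of Lemma \ref{lem:upvpduality},
\EQ{
\normb{\int_0^tS(t-s)F(s)\ds}_{U_\De^2(I;L_x^2)} = \sup_{\norm{g}_{V_\De^2(I;L_x^2)}=1}\absb{\int_I\int_{\R^d}F(t)\wb{g(t)}\dx\dd t},
}
and then to estimate the right-hand side by H\"older's inequality in space-time followed by the $V^2$-Strichartz estimate for $g$: namely \eqref{eq:strichartz-v2} when $2<q<\I$, and the endpoint bound $\norm{g}_{L_t^2L_x^{2d/(d-2)}}\lsm\norm{g}_{V_\De^2}$, valid for $d\ge3$, when $q=2$. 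This gives $\absb{\int_I\int_{\R^d}F\wb g\,\dx\dd t}\le\norm{F}_{L_t^{q'}L_x^{r'}}\norm{g}_{L_t^qL_x^r}\lsm\norm{F}_{L_t^{q'}L_x^{r'}}\norm{g}_{V_\De^2}$, and taking the supremum over $g$ yields \eqref{eq:strichartz-u2-dual}.

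Since every ingredient is by now standard, I do not expect a genuinely new difficulty; the only delicate point is the endpoint exponent $(q,r)=(2,\frac{2d}{d-2})$, which enters \eqref{eq:strichartz-1} (where one must run the Keel--Tao bilinear argument rather than a plain $TT^*$ estimate) and the $q=2$ case of \eqref{eq:strichartz-u2-dual} (where one needs the endpoint $V^2$-Strichartz bound, which does \emph{not} follow from the crude embedding $V_{rc}^2\hra U^q$ for $q>2$ and must be obtained by a separate argument). Both are documented in \cite{KT98AJM,HHK10Poinc,KTV14book}, so the plan is to cite them.
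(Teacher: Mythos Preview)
Your proposal is correct and matches the paper's treatment: the paper gives no proof at all for this lemma, simply citing \cite{KT98AJM,KTV14book}, and your sketch is precisely the standard content of those references together with the $U^p$--$V^p$ transference and duality from \cite{HHK10Poinc,KTV14book}. One small remark: the hypothesis ``if we assume further $q\ne2$'' governs both \eqref{eq:strichartz-v2} and \eqref{eq:strichartz-u2-dual}, so your discussion of the endpoint $q=2$ case of \eqref{eq:strichartz-u2-dual} is unnecessary --- only $2<q<\infty$ is claimed, and there the embedding $V_{rc}^2\hra U^q$ together with \eqref{eq:strichartz-u2} already suffices.
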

\begin{remark}
\eqref{eq:strichartz-2} and \eqref{eq:strichartz-u2-dual} also hold for $\int_{+\I}^t S(t-s) F(s)\dd s$ and $I = [T,+\I)$ with some $T>0$. In fact, for any $T_1>T$ and $F$ defining on $[T,+\I)$, applying \eqref{eq:strichartz-2} with $I=[T-T_1,0]$ and $F(t-T_1)$, then by translation in $s$ and $t$,
\EQ{
\normb{\int_{T_1}^{t} S(t-s) F(s)\dd s}_{L_t^qL_x^r([T,T_1]\times \R^d)} \lsm \norm{F}_{L_t^{\wt{q}'} L_x^{\wt{r}'}([T,T_1]\times\R^d)}.
}
Since the implicit constant in the estimate is independent of $T_1$, the desired conclusion follows by letting $T_1\ra+\I$. 
\end{remark}
\begin{lem}[Endpoint radial Strichartz estimate, \cite{GLNY18JDE}]\label{lem:radial- strichartz} Let $\ph\in\dot H_x^{1/2}(\R^3)$ be radial. Then,
\EQn{\label{eq:radial- strichartz}
\norm{ S(t)\ph}_{L_t^2L_x^\I(\R\times\R^3)} \lsm \norm{\ph}_{\dot H_x^{\frac12}(\R^3)}.
}
\end{lem}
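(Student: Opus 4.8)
We plan to prove \eqref{eq:radial- strichartz} by using the spherical symmetry to reduce it to a one–dimensional problem, and then recovering the summability over dyadic spatial scales and over Littlewood--Paley pieces from the dispersive spreading of the linear flow.

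First I would reduce. Writing $u:=S(t)\ph$, the function $v(t,r):=r\,u(t,r)$, extended oddly in $r\in\R$, solves the one–dimensional free equation $i\pd_t v+\frac12\pd_r^2 v=0$ with data $v_0(r)=r\ph(r)$, and a Plancherel computation with the radial Fourier transform (using that $|\xi|\wh\ph(|\xi|)$ is, up to a constant, the $1$D Fourier transform of the odd extension of $r\ph$) gives $\norm{v_0}_{\dot H^{s}(\R)}\sim_s\norm{\ph}_{\dot H_x^{s}(\R^3)}$ for every $s$. In particular it suffices to bound $\norm{u(t)}_{L_x^\I(\R^3)}=\sup_{r>0}|v(t,r)|/r$ in $L_t^2$ by $\norm{v_0}_{\dot H^{1/2}(\R)}$. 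Since $L_x^\I$ carries no square function, I would first pass to an $\ell^2$ sum over dyadic spatial shells, $\norm{u}_{L_t^2L_x^\I}\loe\brkb{\sum_{j\in\Z}\norm{u}_{L_t^2L_{x\sim2^j}^\I}^2}^{1/2}$, and also Littlewood--Paley decompose $\ph=\sum_N P_N\ph$; by scale invariance the task becomes to estimate $\norm{P_Nu}_{L_t^2L_{x\sim2^j}^\I}$ for each pair $(N,j)$ and then sum.

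Next come the two regimes. For the inner shells $2^j\lsm N^{-1}$ I would use $v(t,0)=0$: from $v(t,r)=\int_0^r\pd_\rho v(t,\rho)\,\dd\rho$ one gets $|P_Nu(t,r)|=|v(t,r)|/r\lsm r^{-1/2}\normb{\pd_r (P_Nv)(t)}_{L^2(\{|x|\le r\})}$, and the one–dimensional local smoothing identity $\normb{|\pd_r|^{1/2}e^{\frac12it\pd_r^2}g}_{L_x^\I L_t^2(\R\times\R)}\lsm\norm{g}_{L^2(\R)}$ (a clean Plancherel computation in $t$; gain of half a derivative, uniform in $x$) controls the resulting $L_t^2L_x^2$ norm, with the frequency localization converting $\norm{v_0}_{L^2}$ into the correct power of $N$ times $\norm{v_0}_{\dot H^{1/2}}$. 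For the outer shells $2^j\gsm N^{-1}$ I would exploit the gain $|P_Nu|=|P_Nv|/|x|\loe 2^{-j}|P_Nv|$ together with the fact that a frequency–$N$ solution of the $1$D equation has its mass at time $t$ concentrated, up to rapidly decaying tails, near $|x|\sim N|t|$; thus the shell $|x|\sim 2^j$ effectively only sees times $|t|\sim 2^j/N$, and there a one–dimensional Strichartz--Bernstein bound, combined with the factor $2^{-j}$, sums over $j$. What makes the sum over $N$ work is the same spreading: at a fixed $t$ the frequency–$N$ part of $u$ is supported near $|x|\sim N|t|$, so the pieces $\{P_Nu(t,\cdot)\}_N$ are essentially disjoint in $x$; hence $\norm{u(t)}_{L_x^\I}\approx\sup_N\norm{P_Nu(t)}_{L^\I_{|x|\sim N|t|}}$, and squaring and integrating in $t$ converts the frequency sum into an $\ell_N^2$ sum, matching $\norm{\ph}_{\dot H_x^{1/2}}=\normb{\norm{P_N\ph}_{\dot H_x^{1/2}}}_{\ell_N^2}$.

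The hard part will be that this is a genuine endpoint estimate. The naive $TT^*$ argument loses a logarithm — the kernel of $|\nabla|^{-1/2}S(t)$ decays only like $\jb{t}^{-1/2}$, which is borderline non-integrable in time (equivalently, the unit–frequency $1$D bound $\norm{e^{\frac12it\pd_r^2}g}_{L_t^2L_x^\I}\lsm\norm{g}_{L^2}$ is false) — so the estimate can only close by squeezing true $\ep$-room out of the interplay of the $|x|^{-1}$ factor, the vanishing of $v$ at the origin, and the almost-orthogonality coming from the dispersive spreading above. I expect the most delicate point to be making the $|x|\sim N|t|$ localization rigorous (controlling its tails) and handling the small-time / near-origin regime, where that heuristic degenerates; this is where the bulk of the work is concentrated. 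As an alternative, partly black-box route, one may instead apply the radial Sobolev inequality (Lemma~\ref{lem:radial-sobolev}) with a small positive weight $|x|^{\be}$ to reduce the outer region to a family of weighted Kato-type smoothing estimates $\normb{|x|^{-\al}S(t)g}_{L_{t,x}^2(\R\times\R^3)}\lsm\norm{g}_{\dot H_x^{-(1-\al)}}$, $\tfrac12\le\al<1$, interpolating between the standard local smoothing ($\al=\tfrac12$) and the Kato--Yajima inverse-square smoothing ($\al\to1$); but the inner region and the frequency summation still require the orthogonality argument above, so the endpoint difficulty is unavoidable.
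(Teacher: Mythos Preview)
The paper does not prove this lemma; it is quoted from \cite{GLNY18JDE} without argument, so there is no proof in the paper to compare against. Your proposal is therefore being measured against a result the authors simply import.

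On the sketch itself: the reduction $v=r\,u$ to an odd $1$D solution and the norm equivalence $\norm{v_0}_{\dot H^{1/2}(\R)}\sim\norm{\ph}_{\dot H^{1/2}(\R^3)}$ are correct and standard, and you correctly identify that the whole difficulty is the endpoint summation. But the argument as written has a concrete gap in the inner-shell step. Carrying out your recipe for $2^j\lsm N^{-1}$ --- Cauchy--Schwarz on $v(t,r)=\int_0^r\pd_\rho v\,\dd\rho$, then the $1$D local smoothing bound $\sup_\rho\norm{\pd_\rho P_Nv(\cdot,\rho)}_{L_t^2}\lsm N^{1/2}\norm{P_Nv_0}_{L^2}$ integrated over $\rho\in(0,2^j)$ --- yields
\EQ{
\norm{P_Nu}_{L_t^2L_{|x|\sim2^j}^\I}\lsm 2^{-j/2}\cdot 2^{j/2}\cdot N^{1/2}\norm{P_Nv_0}_{L^2}=\norm{P_N\ph}_{\dot H^{1/2}},
}
with \emph{no} decay in $j$; the $\ell_j^2$ sum over the infinitely many inner shells therefore diverges. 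Replacing Cauchy--Schwarz by the trivial bound $|P_Nv(t,r)|\le r\norm{\pd_r P_Nv(t)}_{L^\I}$ removes the $j$-sum but lands you exactly on the forbidden $1$D endpoint $\norm{e^{\frac12it\pd_r^2}g}_{L_t^2L_x^\I}$, which you already note is false. So this branch does not close without a genuinely new input.

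The frequency summation has a parallel problem: the frequency-localized bound $\norm{P_NS(t)\ph}_{L_t^2L_x^\I}\lsm\norm{P_N\ph}_{\dot H^{1/2}}$ is elementary (Bernstein plus the dispersive estimate, no radiality needed), and the entire content of the lemma is upgrading the resulting $\ell_N^1$ sum to $\ell_N^2$ inside $L_x^\I$. Your ``$P_Nu(t)$ lives near $|x|\sim N|t|$'' heuristic is the right picture, but its tails are only polynomially small and the picture degenerates for $|t|\lsm N^{-2}$, so as stated it does not recover the $\ell^2$ structure; this is precisely the logarithmic borderline you flag. The proof in \cite{GLNY18JDE} avoids the shell-by-shell summation altogether and instead passes through a boundary/trace formulation of Strichartz, which is what furnishes the missing orthogonality. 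Your Kato--Yajima alternative has the same defect: the endpoint $\al\to1$ of $\normb{|x|^{-\al}S(t)g}_{L_{t,x}^2}\lsm\norm{g}_{\dot H^{\al-1}}$ is itself forbidden, so interpolating toward it cannot absorb the logarithm.
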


%
%
\begin{lem}[Local smoothing, \cite{CS88JAMS,GV92CMP,KPV91Indiana}]\label{lem:local-smoothing}
We have that 
\EQ{
\sup_{j\in \Z } 2^{-\frac12j}\norm{\chi_j S(t)f}_{L_{t,x}^2(\R\times\R^3)} \lsm \normb{|\nabla|^{-\frac12}f}_{L_x^2(\R^3)}.
}
\end{lem}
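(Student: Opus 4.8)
\emph{Proof proposal.} The plan is to reduce the statement to the scale-invariant (sharp) local smoothing estimate for the free flow and then prove the latter by Fourier transforming in the time variable. For $f$ with $|\nabla|^{-1/2}f\in L_x^2$, write $f=|\nabla|^{1/2}g$ with $g:=|\nabla|^{-1/2}f\in L_x^2$; since $|\nabla|^{1/2}$ is a Fourier multiplier it commutes with $S(t)=e^{\frac12it\De}$, so the claimed inequality is equivalent to
\EQ{
\sup_{j\in\Z} 2^{-\frac12 j}\normb{\chi_j\, |\nabla|^{\frac12} S(t) g}_{L_{t,x}^2(\R\times\R^3)} \lsm \norm{g}_{L_x^2(\R^3)},
}
which is exactly the dyadic form of the standard local smoothing bound, with $2^j$ in the role of the radius. (One could first remove $\sup_j$ by the dilation $g\mapsto g(\la\,\cdot)$ and treat only $j=0$, but it is just as easy to keep $j$ general: the weight $2^j$ falls out of the computation.)

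\textbf{Fourier transform in $t$.} Since $\widehat{|\nabla|^{1/2}S(t)g}(\xi)=|\xi|^{1/2}e^{-\frac12 it|\xi|^2}\hat g(\xi)$, taking the Fourier transform in $t$ yields a delta measure on the paraboloid, which in polar coordinates $\xi=\rho\omega$ collapses to surface measure on spheres: the $t$-transform of $|\nabla|^{1/2}S(t)g(x)$ is supported in $\{\tau\le 0\}$ and equals, with $\rho=\rho(\tau):=\sqrt{-2\tau}$ and $d\sigma$ the surface measure on $S^2$,
\EQ{
c\,\rho^{\frac32}\int_{S^2} e^{i\rho\, x\cdot\omega}\,\hat g(\rho\omega)\,d\omega = c\,\rho^{\frac32}\,\widehat{h_\rho\,d\sigma}(\rho x),\qquad h_\rho(\omega):=\hat g(\rho\omega),
}
i.e. (a constant times) the spherical extension operator. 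By Plancherel in $t$ and the change of variables $\tau\leftrightarrow\rho$ (so $|d\tau|=\rho\,d\rho$), followed by the rescaling $y=\rho x$,
\EQ{
\normb{\chi_j|\nabla|^{\frac12}S(t)g}_{L_{t,x}^2}^2 = c\int_0^\infty \rho^{4}\mbrkb{\,\int_{\R^3}\chi_j(|x|)^2\,\absb{\widehat{h_\rho\,d\sigma}(\rho x)}^2\,dx\,}\,d\rho = c\int_0^\infty \rho\mbrkb{\,\int_{|y|\sim 2^j\rho}\absb{\widehat{h_\rho\,d\sigma}(y)}^2\,dy\,}\,d\rho.
}
Now invoke the elementary local $L^2$ extension estimate for the unit sphere in $\R^3$,
\EQ{
\int_{|y|\le R}\absb{\widehat{h\,d\sigma}(y)}^2\,dy \lsm \min\brko{R,R^3}\,\norm{h}_{L^2(S^2,d\sigma)}^2 \qquad\text{for all }R>0,
}
whose small-$R$ half is trivial ($\normo{\widehat{h\,d\sigma}}_{L_x^\I}\lsm\normo{h}_{L^1(S^2)}\lsm\normo{h}_{L^2(S^2)}$ on a ball of volume $\sim R^3$) and whose large-$R$ half is the classical Agmon--H\"ormander / Stein--Tomas local restriction bound. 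Applying it with $R\sim2^j\rho$ and then using $\min(2^j\rho^2,2^{3j}\rho^4)\le 2^j\rho^2$ with polar coordinates,
\EQ{
\normb{\chi_j|\nabla|^{\frac12}S(t)g}_{L_{t,x}^2}^2 \lsm \int_0^\infty \min\brko{2^j\rho^{2},2^{3j}\rho^{4}}\int_{S^2}\abso{\hat g(\rho\omega)}^2\,d\omega\,d\rho \lsm 2^j \int_{\R^3}\abso{\hat g(\xi)}^2\,d\xi = 2^j\norm{g}_{L_x^2}^2,
}
which is the desired bound; undoing the substitution $g=|\nabla|^{-1/2}f$ finishes the argument (the general $f$ following by density).

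\textbf{Main obstacle and alternatives.} The only input beyond bookkeeping is the local $L^2$ extension/restriction estimate for spheres with the \emph{sharp} $R$-dependence: the small-$R$ régime is what forces one to keep $\min(R,R^3)$ rather than the crude $\lsm(1+R)$, since in the final $\rho$-integral the exponent $\rho^4$ multiplied by $\rho^{-3}$ from the rescaling would otherwise produce a non-integrable $\rho^1$ near $\rho=0$. A route avoiding the spherical extension operator altogether is the $TT^*$/Kato-smoothing argument: $\int_\R\normo{\chi_j|\nabla|^{1/2}S(t)g}_{L_x^2}^2\,dt$ is, via Plancherel in $t$ and the spectral theorem for $-\tfrac12\De$, controlled by the uniform weighted resolvent bound $\normo{\chi_j|\nabla|^{1/2}(-\tfrac12\De-\la\mp i0)^{-1}|\nabla|^{1/2}\chi_j}_{L_x^2\to L_x^2}\lsm 2^j$, i.e. the classical Agmon--Kato--Kuroda estimate on $\R^3$ with the correct scaling; this is in essence the content of \cite{CS88JAMS,GV92CMP,KPV91Indiana}, so at minimum the lemma may be cited directly from those references.
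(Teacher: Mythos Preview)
Your argument is correct. The reduction to $g=|\nabla|^{-1/2}f$, the Plancherel-in-$t$ computation, the rescaling $y=\rho x$, and the application of the Agmon--H\"ormander local extension bound $\int_{|y|\le R}|\widehat{h\,d\sigma}|^2\,dy\lsm \min(R,R^3)\,\norm{h}_{L^2(S^2)}^2$ are all carried out cleanly, and your observation that the $R^3$ regime is needed to avoid a divergent $\rho$-integral near $\rho=0$ is the right point to flag.

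As for comparison: the paper does not supply a proof of this lemma at all; it is quoted as a classical result with references to Constantin--Saut, Ginibre--Velo, and Kenig--Ponce--Vega. Your write-up therefore goes beyond the paper by giving a self-contained derivation. The route you take (Fourier transform in $t$ followed by the spherical restriction/extension estimate) is one of the standard proofs and is essentially the approach of \cite{KPV91Indiana}; the $TT^*$/resolvent alternative you sketch at the end is closer in spirit to \cite{CS88JAMS}. Either is perfectly acceptable here, and simply citing the references, as the paper does, would also suffice.
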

\begin{remark}
In this paper, we also use the local smoothing effect estimate in the small-$|x|$ case:
\EQ{
\norm{\chi_{\le 0} S(t)f}_{L_{t,x}^2(\R\times\R^3)} \lsm \normb{|\nabla|^{-\frac12}f}_{L_x^2(\R^3)},
}
and the localized version:
\EQ{
\sup_{j\in \Z } 2^{-\frac12j}\norm{\chi_j S(t)P_Nf}_{L_{t,x}^2(\R\times\R^3)} \lsm N^{-\frac12} \norm{P_Nf}_{L_x^2(\R^3)}.
}
Moreover, these estimates can be passed into atom space $U_\De^2$, using the general extension principle (see Proposition 2.19 in \cite{HHK10Poinc}). For example, we have that 
\EQ{
\sup_{j\in \Z } 2^{-\frac12j}\norm{\chi_j P_Nu}_{L_{t,x}^2(\R\times\R^3)} \lsm N^{-\frac12} \norm{P_Nu}_{U_\De^2(\R^3)}.
}
\end{remark}
Next, we gather some properties related to the Lorentz space.
\begin{lem}
We have that the following holds:
\begin{enumerate}
\item (Equivalent formula for the Lorentz space) Let $1\le q<\I$ and $1\le p\le \I$, then
\EQn{\label{eq:lorentz-equivalent}
\norm{f}_{L^{q,p}(\R^d)} \sim_{q,p} \normb{\norm{f(\cdot)\chi_j(\cdot)}_{L^q(\R^d)}}_{l_{j\in\Z}^p}.
}
Furthermore, we have that $\norm{f}_{L^{q,q}(\R^d)} \sim \norm{f}_{L^{q}(\R^d)}$.
\item (H\"older's inequality for the Lorentz space) Let $1\le q_1,q_2<\I$ and $1\le p_1,p_2\le \I$ such that $\frac1q=\frac1{q_1}+\frac1{q_2}$ and $\frac1p=\frac1{p_1}+\frac1{p_2}$, then
\EQn{\label{eq:lorentz-holder}
\norm{fg}_{L^{q,p}(\R^d)} \lsm \norm{f}_{L^{q_1,p_1}(\R^d)} \norm{g}_{L^{q_2,p_2}(\R^d)}.
}
\item (Lorentz version of the Strichartz estimate) Let $(q,r)$ and $(\wt{q},\wt{r})$ be $L_x^2$-admissible. Then,
\EQn{\label{eq:lorentz-strichartz-1}
\norm{S(t)\ph}_{L_t^{q,2}L_x^r(\R\times\R^d)} \lsm \norm{\ph}_{L_x^2(\R^d)},
}
and
\EQn{\label{eq:lorentz-strichartz-2}
\normb{\int_0^tS(t-s) F(s)\ds}_{L_t^{q,2}L_x^r(\R\times \R^d)} \lsm \norm{F}_{L_t^{\wt{q}',2} L_x^{\wt{r}'}(\R\times\R^d)}.
}
\item (Lorentz version of Strichartz-to-$U_\De^p$ estimate) Let $(q,r)$ be $L_x^2$-admissible, $p\ge2$, and $q_1=\min\fbrk{q,p}$. Then,
\EQn{\label{eq:lorentz-strichartz-up}
\norm{u}_{L_t^{q,p} L_x^r(I\times \R^d)} \lsm \norm{u}_{U_\De^{q_1}(I;L_x^2(\R^d))} \lsm \norm{u}_{U_\De^{2}(I;L_x^2(\R^d))},
}
\item (Lorentz version of dual $U_\De^2$ estimate) Let $I\subset\R$ be an interval containing $0$. For any $1\le p<2$ and $L^2$-admissible pair $(q,r)$ with $q\ne2$,
\EQn{\label{eq:lorentz-duality}
\normb{\int_0^t S(t-s) F(s) \ds}_{U_\De^2(I;L_x^2(\R^d))}
\lsm \norm{F}_{L_t^{q'} L_x^{r'} + L_t^{q',p} L_x^{r'}(I\times\R^d)}.
}
\end{enumerate}
\end{lem}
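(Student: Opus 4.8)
The plan is to prove the five assertions in the listed order, each relying on the previous ones.

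\emph{Equivalent formula \eqref{eq:lorentz-equivalent} and H\"older \eqref{eq:lorentz-holder}.} I would work with the distribution function $d_f(\la):=\absb{\fbrk{x\in\R^d:|f(x)|>\la}}$. Since $d_f$ is non-increasing, discretizing the $\frac{\dd\la}{\la}$-integral in the definition of $\norm{f}_{L^{q,p}}$ over the dyadic scales $\la\sim2^j$ yields $\norm{f}_{L^{q,p}(\R^d)}^p\sim_{q,p}\sum_{j\in\Z}2^{jp}d_f(2^j)^{p/q}$. With $A_j:=\fbrk{x:2^j<|f(x)|\le2^{j+1}}$ one has $\norm{f\chi_j(|f|)}_{L^q}\sim2^j|A_j|^{1/q}$ and $d_f(2^j)=\sum_{k\ge j}|A_k|$, so \eqref{eq:lorentz-equivalent} reduces to the two-sided bound $\sum_j2^{jp}\big(\sum_{k\ge j}|A_k|\big)^{p/q}\sim_{p,q}\sum_j2^{jp}|A_j|^{p/q}$; ``$\gsm$'' is immediate from $|A_j|\le d_f(2^j)$, and ``$\lsm$'' follows from the subadditivity of $t\mapsto t^{p/q}$ when $p\le q$ and from Schur's test applied to the kernel $2^{-(k-j)q}\cha_{k\ge j}$ on $l^{p/q}$ when $p\ge q$ (its row and column sums being $\lsm1$). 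The identity $\norm{f}_{L^{q,q}}\sim\norm{f}_{L^q}$ is the layer-cake formula. For \eqref{eq:lorentz-holder} I would pass to the decreasing rearrangement: the inclusion $\fbrk{|fg|>f^*(\frac t2)g^*(\frac t2)}\subset\fbrk{|f|>f^*(\frac t2)}\cup\fbrk{|g|>g^*(\frac t2)}$ gives $(fg)^*(t)\le f^*(\frac t2)g^*(\frac t2)$; then, writing $\norm{h}_{L^{q,p}}\sim\normb{t^{1/q}h^*(t)}_{L^p(\frac{\dd t}{t})}$, factoring $t^{1/q}=t^{1/q_1}t^{1/q_2}$, applying H\"older on $L^p(\frac{\dd t}{t})$ with exponents $p_1,p_2$, and rescaling $t\mapsto2t$, yields \eqref{eq:lorentz-holder}.

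\emph{Lorentz Strichartz \eqref{eq:lorentz-strichartz-1}--\eqref{eq:lorentz-strichartz-2}.} This is the heart of the lemma, the point being the \emph{sharp} second Lorentz index $2$ in time. For $2<q<\I$ (hence, for admissible $(q,r)$ in $d\ge3$, also $2<r<\I$) I would first prove the untruncated diagonal bound $\normb{\int_\R S(t-s)F(s)\ds}_{L_t^{q,2}L_x^r}\lsm\norm{F}_{L_t^{q',2}L_x^{r'}}$ by combining Minkowski's inequality in $x$, the dispersive estimate $\norm{S(t-s)g}_{L_x^r}\lsm|t-s|^{-d(1/2-1/r)}\norm{g}_{L_x^{r'}}$ with $d(\frac12-\frac1r)=\frac2q$, and O'Neil's convolution inequality in Lorentz spaces $\norm{h*k}_{L_t^{q,2}}\lsm\norm{h}_{L_t^{q',2}}\norm{k}_{L_t^{q/2,\I}}$ together with $\norm{|t|^{-2/q}}_{L_t^{q/2,\I}}\sim1$; the remaining diagonal cases $q=2$ (Keel--Tao endpoint, $L_t^{2,2}=L_t^2$) and $q=\I$ (mass conservation, $L_t^{\I,2}\sim L_t^\I$) are classical. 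A $TT^*$ argument with $X=L_t^{q,2}L_x^r$ --- its $TT^*$ operator being exactly the untruncated propagator just controlled --- then gives the homogeneous estimate $\norm{S(t)\ph}_{L_t^{q,2}L_x^r}\lsm\norm{\ph}_{L_x^2}$ for every admissible $(q,r)$; dualizing this and composing $\int_\R S(t-s)F(s)\ds=S(t)\int_\R S(-s)F(s)\ds$ (with the Lorentz H\"older \eqref{eq:lorentz-holder} in $s$) yields the untruncated inhomogeneous estimate for two distinct admissible pairs, and the Christ--Kiselev lemma (valid since $q>2\ge\wt{q}'$) converts $\int_\R$ into $\int_0^t$. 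I expect this part to be the main obstacle, as it is the only place where genuine harmonic analysis (O'Neil's inequality and the Christ--Kiselev lemma in a Lorentz setting) enters; the remaining assertions are soft consequences of atomic structure and duality.

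\emph{Parts \eqref{eq:lorentz-strichartz-up} and \eqref{eq:lorentz-duality}.} For \eqref{eq:lorentz-strichartz-up}, by atomic decomposition and $U_\De^2\hra U_\De^{q_1}$ it suffices to bound $\norm{u}_{L_t^{q,p}L_x^r}\lsm1$ for a $U_\De^{q_1}$-atom $u=\sum_k\cha_{[t_{k-1},t_k)}S(t)\ph_{k-1}$ with $\sum_k\norm{\ph_k}_{L_x^2}^{q_1}=1$. If $p\ge q$ (so $q_1=q$) then $L_t^q\hra L_t^{q,p}$ and the bound is \eqref{eq:strichartz-u2}; if $p<q$ (so $q_1=p$) the disjointness of the intervals $[t_{k-1},t_k)$ combined with \eqref{eq:lorentz-equivalent} makes $\norm{\cdot}_{L_t^{q,p}L_x^r}$ $p$-subadditive on $u$ (as $p/q\le1$), so $\norm{u}_{L_t^{q,p}L_x^r}^p\lsm\sum_k\norm{S(t)\ph_{k-1}}_{L_t^{q,p}L_x^r}^p\lsm\sum_k\norm{S(t)\ph_{k-1}}_{L_t^{q,2}L_x^r}^p\lsm\sum_k\norm{\ph_{k-1}}_{L_x^2}^p=1$, using $L_t^{q,2}\hra L_t^{q,p}$ (as $p\ge2$) and \eqref{eq:lorentz-strichartz-1}. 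For \eqref{eq:lorentz-duality}, split $F=F_1+F_2$ with $F_1\in L_t^{q'}L_x^{r'}$, $F_2\in L_t^{q',p}L_x^{r'}$; the $F_1$-part is exactly \eqref{eq:strichartz-u2-dual}, and for $F_2$ the duality formula \eqref{eq:upvpduality-2}, H\"older in $x$, and the Lorentz H\"older \eqref{eq:lorentz-holder} in $t$ (with $\frac1{q'}+\frac1q=1$, $\frac1p+\frac1{p'}=1$, and $L_t^{1,1}=L_t^1$) give
\EQ{
\normb{\int_0^tS(t-s)F_2(s)\ds}_{U_\De^2(I;L_x^2)} & =\sup_{\norm{g}_{V_\De^2(I)}=1}\absb{\int_I\int_{\R^d}F_2\,\wb g\,\dx\dd t} \\
& \lsm\sup_{\norm{g}_{V_\De^2(I)}=1}\norm{F_2}_{L_t^{q',p}L_x^{r'}}\,\norm{g}_{L_t^{q,p'}L_x^r}.
}
Since $p<2$ forces $p'>2$, and $q>2$, part \eqref{eq:lorentz-strichartz-up} with time-index $p'$ gives $\norm{g}_{L_t^{q,p'}L_x^r}\lsm\norm{g}_{U_\De^{\min(q,p')}}$, and $\min(q,p')>2$ yields $V_\De^2\hra U_\De^{\min(q,p')}$, whence $\norm{g}_{L_t^{q,p'}L_x^r}\lsm\norm{g}_{V_\De^2(I)}=1$; this controls the $F_2$-part and completes the proof.
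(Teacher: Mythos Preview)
Your proposal is correct and follows essentially the same route as the paper. The paper treats parts \eqref{eq:lorentz-equivalent}--\eqref{eq:lorentz-strichartz-2} as known (citing basic Lorentz-space facts and Nakanishi respectively), while you supply detailed arguments; your O'Neil--Christ--Kiselev sketch for \eqref{eq:lorentz-strichartz-1}--\eqref{eq:lorentz-strichartz-2} is a standard and acceptable proof. For the new results \eqref{eq:lorentz-strichartz-up} and \eqref{eq:lorentz-duality} your argument matches the paper's: both reduce \eqref{eq:lorentz-strichartz-up} to a single $U_\De^{q_1}$-atom and exploit the dyadic equivalent formula \eqref{eq:lorentz-equivalent} together with the disjointness of the partition intervals, then feed into $L_t^{q,2}\hookrightarrow L_t^{q,p}$ and \eqref{eq:lorentz-strichartz-1}; and both prove \eqref{eq:lorentz-duality} via the duality \eqref{eq:upvpduality-2}, Lorentz--H\"older in $t$, and \eqref{eq:lorentz-strichartz-up} applied with second index $p'>2$ so that $V_\De^2\hookrightarrow U_\De^{\min(q,p')}$. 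The only cosmetic difference is that the paper packages the atom estimate into a single observation (its inequality (3.4), valid for all $p\ge2$ via $l^{q_1}\hookrightarrow l^q$ and Minkowski), whereas you split into the two cases $p\ge q$ and $p<q$; your case $p\ge q$ is in fact slightly quicker, since $L_t^q\hookrightarrow L_t^{q,p}$ reduces it directly to the classical \eqref{eq:strichartz-u2}.
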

\begin{remark}
\eqref{eq:lorentz-strichartz-up} and \eqref{eq:lorentz-duality} are new results. \eqref{eq:lorentz-strichartz-up} is quite useful when $q_1>2$, so that the related norm can be passed into $V_\De^2$. Moreover, we believe that this point would be of independent interest.
\end{remark}
\begin{proof}
\eqref{eq:lorentz-equivalent} and \eqref{eq:lorentz-holder} are basic properties of Lorentz space. The Lorentz version of Strichartz estimates \eqref{eq:lorentz-strichartz-1} and \eqref{eq:lorentz-strichartz-2} was observed by Nakanishi \cite{Nak01siam}.
	
Now, we prove \eqref{eq:lorentz-strichartz-up}, which is based on the following observation. For any sequence $\fbrk{a_k(t)}_{k\in\N_+}\subset\C$, finite partition $\mathcal{Z}:-\I<t_0<t_1<\cdots<t_K=\I$, by $l^{q_1}\hra l^q$ and Minkowski's inequality,
\EQn{\label{esti:lorentz-strichartz-up-observation}
\normb{\sum_{k=1}^K\mathbbm{1}_{[t_{k-1},t_k)}(t)a_k(t)\chi_j(t)}_{l_{j}^{p}L_t^q} \lsm \norm{a_k(t)\chi_j(t)}_{l_{j}^{p}L_t^ql_k^{q_1}} \lsm \norm{a_k(t)\chi_j(t)}_{l_k^{q_1}l_{j}^{p}L_t^q},
}
where we write $l_j^p:=l_{j}^p(\Z)$, $L_t^q:=L_t^q(\R)$, and  $l_k^{q_1}:=l_{k\in\fbrk{1,\cdots,K}}^{q_1}$ for short. Now, the desired inequality \eqref{eq:lorentz-strichartz-up} follows by standard argument. In fact, it suffices to prove that for any $U_\De^{q_1}$-atom $a$, it holds that
\EQn{\label{esti:lorentz-strichartz-up-atom}
\norm{a}_{L_t^{q,p} L_x^r(I\times \R^d)} \lsm 1.
}
Recall the definition of atom: there exists finite partition $\mathcal{Z}:-\I<t_0<t_1<\cdots<t_K=\I$ and $\fbrk{\phi_k}_{k=1}^{K}\subset L_x^2(\R^d)$ with $\sum_{k=1}^{K} \norm{\phi_k}_{L_x^2(\R^d)}^{q_1} =1$ such that
\EQ{
a=\sum_{k=1}^K \mathbbm{1}_{[t_{k-1},t_k)}(t)S(t)\phi_{k}.
}
By the equivalent formula \eqref{eq:lorentz-equivalent} for the Lorentz space, \eqref{esti:lorentz-strichartz-up-observation}, embedding $L_t^{q,2}\hra L_t^{q,p}$, and the Strichartz estimate \eqref{eq:lorentz-strichartz-1},
\EQ{
\norm{a}_{L_t^{q,p} L_x^r(\R\times \R^3)} \le & \normB{\sum_{k=1}^K \mathbbm{1}_{[t_{k-1},t_k)}(t)\norm{S(t)\phi_{k}}_{L_x^r(\R^3)}}_{L_t^{q,p} (\R)} \\ 
= & \normB{\sum_{k=1}^K \mathbbm{1}_{[t_{k-1},t_k)}(t)\norm{S(t)\phi_{k}}_{L_x^r(\R^3)}\chi_j(t)}_{l_j^pL_t^{q}(2^\Z\times\R)} \\ 
\lsm & \norm{S(t)\phi_{k}}_{l_k^{q_1}L_t^{q,p} L_x^r(\fbrk{1,\cdots,K}\times\R\times \R^3)} \\
\lsm & \norm{S(t)\phi_{k}}_{l_k^{q_1}L_t^{q,2} L_x^r(\fbrk{1,\cdots,K}\times\R\times \R^3)} \\
\lsm & \norm{\phi_{k}}_{l_k^{q_1} L_x^2(\fbrk{1,\cdots,K}\times\R^3)} \\
\lsm & 1.
}
This gives \eqref{esti:lorentz-strichartz-up-atom}, and then \eqref{eq:lorentz-strichartz-up} follows.

Finally, we prove \eqref{eq:lorentz-duality}. Recall that \eqref{eq:upvpduality-2}, it suffices to bound
\EQ{
\sup_{\norm{g}_{V_\De^2(I;L_x^2(\R^d))}=1} \abs{\int_{I}\int_{\R^d} F(t)\wb{g(t)}\dx\dd t}.
}
Note that $1\le p<2$, then its H\"older conjugate exponent satisfies $2<p'\le\I$, thus $q_1=\max\fbrk{q,p}>2$. By the Strichartz estimate \eqref{eq:lorentz-strichartz-up},
\EQn{\label{esti:lorentz-duality}
\norm{g}_{L_{t}^{q,p'} L_x^{r}(I\times\R^d)} \lsm \norm{u}_{U_\De^{q_1}(I;L_x^2(\R^d))} \lsm \norm{g}_{V_\De^2(I;L_x^2(\R^d))}.
}
Therefore, \eqref{eq:lorentz-duality} follows from  \eqref{eq:upvpduality-2}, \eqref{eq:strichartz-v2}, and \eqref{esti:lorentz-duality}.
\end{proof}

\subsection{Pseudo conformal transform}
Now, we recall some notation and properties. Define the pseudo conformal transform $\mathcal T$ by 
\EQn{\label{def:PC-transform}
	\mathcal T f(t,x) := \frac{1}{(it)^{d/2}} e^{\frac{i|x|^2}{2t}} \bar f(\frac{1}{t},\frac{x}{t}).
}
Then the inverse transform satisfies  
$$
\mathcal T^{-1}=\mathcal T.
$$

Now, we define
\EQ{
	M(t) := e^{\frac{i|x|^2}{2t}}\text{, and } J(t):=x+it\nabla.
}
Recall the explicit formula for $S(t)$ in \eqref{linear-explicit}:
\EQ{
S(t) = \frac{1}{(2\pi it)^{d/2}} \int_{\R^d} e^{i\frac{|x-y|^2}{2t}} u_0(y)\dy = \frac{1}{(2\pi it)^{d/2}} M(t) \F(M(t)u_0)(\frac xt).
}
Then, this gives that for $f(x):\R^d\ra\C$,
\EQn{\label{eq:pc-TS=SF}
\TT S(t)f=S(t)\F^{-1}\wb f,
}
and other presentations for the vector field:
\EQ{
	J(t)=S(t)xS(-t)=M(t)(it\nabla)M(-t).
}
Therefore, we define the fractional vector field for any $s>0$,
\EQ{
|J(t)|^{s}:= S(t)|x|^sS(-t).
}
Similarly, one may show that $|J(t)|^{s} = M(t)|t\nabla|^sM(-t)$. Using this formula, we have for spacetime function $u(t,x)$,
\EQn{\label{eq:pc-nablaT=SJ}
|\nabla|^s\TT u =\TT |J(t)|^s u.
}

Now, we give two basic facts for the pseudo conformal transform:
\begin{remark}
Therefore, by \eqref{eq:pc-TS=SF}, if $u$ is the solution of \eqref{eq:nls} with initial data $u_0$, then $\U=\TT u$ solves
\EQn{\label{eq:nls-pc}
	i\pd_t \U + \frac12 \De \U = t^{\frac{dp}{2}-2}|\U|^p\U,
}
with $S(-t)U$ tends to $ \F^{-1} \wb u_0$ as $t\ra+\I$ in suitable sense. Moreover, by \eqref{eq:pc-nablaT=SJ}, for any $0<a<b<\I$, we have that
\EQ{
\U \in C([a,b];\dot H_x^s(\R^d)) \quad \Longleftrightarrow & \quad |J(t)|^su \in C_t([\frac1b,\frac1a];L_x^2(\R^d)) \\
\Longleftrightarrow & \quad S(-t)u \in C_t([\frac1b,\frac1a];\F \dot H_x^s(\R^d)).
}
\end{remark}

\begin{remark}\label{rem:spacetime-exponent-transform}
	Let $u(t,x)$ be a function and
	\EQ{
		\U(s,y)=\TT u(s,y)=  \frac{1}{(is)^{d/2}} e^{\frac{i|y|^2}{2s}} \wb u(\frac{1}{s},\frac{y}{s}).
	}
	Then, for any $1\le q,r\le\I$, we have
	\EQ{
		\norm{u}_{L_t^q L_x^r(\R\times\R^d)} = \normb{s^{\frac d2-\frac2q-\frac dr}\U(s,y)}_{L_s^q L_y^r(\R\times\R^d)}.
	}
	Particularly, for any $\de>0$, $s\ge0$, and $L_x^2$-admissible $(q,r)$, we have
\EQ{
\norm{|J(t)|^su}_{L_t^q L_x^r((0,\de]\times\R^3)} = \norm{\abs{\nabla}^s\U}_{L_t^q L_x^r([1/\de,+\I)\times\R^3)}.
}
The above identity also hold for $U_\De^2(I;L_x^2(\R^d)$ and $V_\De^2(I;L_x^2(\R^d)$.
\end{remark}

\vspace{2cm}

\section{Global well-posedness}\label{sec:proof}

\vspace{0.5cm}

\subsection{High-low decomposition}\label{sec:highlow}
We apply the pseudo conformal transform, then the main theorem is reduced to the final data problem for the following nonautonomous equation:
\EQn{\label{eq:nnls}
i\pd_t \U + \frac12 \De \U = t^{-\frac{1}{2}}|\U|\U.
}
It suffices to prove the well-posedness on $[t_0,\I)$ for any given $t_0>0$.

To start with, we give the basic settings:
\begin{redu}\label{assu:main}
We assume that the following hold.
\begin{enumerate}
\item
Let $t_0>0$ be any fixed time, and let $|x|^{1/2}u_0\in L_x^2(\R^3)$, and $u_0$ be radial.
\item
Fix a sufficiently small constant $\de_0:=\de_0(t_0,u_0)>0$ that will be defined later, see \eqref{defn:delta0} below precisely. Since $|x|^{1/2}u_0\in L_x^2(\R^3)$, then there exists $N_0=N_0(\de_0,u_0)\in2^\N$ such that $N_0>1$ and
\EQn{\label{eq:initialdata-estimate-v}
	\norm{|x|^{1/2}\chi_{\goe N_0}u_0}_{L_x^2(\R^d)} \loe \de_0.
}
\end{enumerate}
\end{redu}

Take the initial data as
\EQ{
	v_0:= \chi_{\goe N_0}u_0 \text{, and }w_0:= \chi_{\loe N_0}u_0.
}
and define
\EQ{
	v:=S(t)v_0\text{, and }w:=u-v.
}
Next, we apply the pseudo-conformal transform, and define
\EQ{
	\U:=\TT u\text{, }\V:=\TT v\text{, and }\W:=\TT w.
}
Then, denote the final data as
\EQ{
	\U_+:= \F^{-1} \wb u_0\text{, } \V_+ := \F^{-1} \wb v_0\text{, and }\W_+ := \F^{-1} \wb w_0.
}
 
\begin{remark}
Throughout the proof, the norm $\norm{|x|^{1/2}u_0}_{L_x^2(\R^3)}$ can be viewed as a constant, and some parameter depending on the profile of $\U_+$ can also be regarded as constant, then we usually omit this dependence, namely write $ C = C(u_0) = C(\U_+) $ for short.
\end{remark}

Therefore, by the above settings, $w$ solves the equation
\EQ{
	\left\{ \aligned
	&i\pd_t w +\frac12\De w=|u|u, \\
	& w(0,x) = w_0.
	\endaligned
	\right.
}
We also have
\EQ{
\V = S(t)\V_+,
}
and
\EQ{
\W = S(t)\W_+ -i\int_{+\I}^tS(t-\ta)(\ta^{-\frac12}|\U|\U) \dd \ta,
}
where the integral is in the sense that
\EQ{
\lim_{t'\ra+\I}\int_{t'}^tS(t-\ta)(\ta^{-\frac12}|\U|\U) \dd \ta\quad\text{in }\dot H_x^{1/2}(\R^3).
}
By \eqref{eq:initialdata-estimate-v}, we have that
\EQn{\label{eq:finaldata-estimate-v}
\norm{\V_+}_{\dot H_x^{\frac12}(\R^3)} \le \de_0, 
}
and
\EQn{\label{eq:finaldata-estimate-w}
\norm{\W_+}_{\dot H_x^1(\R^3)} \lsm N_0^{\frac12}.
}

Combining \eqref{eq:finaldata-estimate-v}, the Strichartz estimate \eqref{eq:strichartz-1}, and the radial Strichartz estimate in Lemma \ref{lem:radial- strichartz}, we also have that
\begin{lem}[Linear estimate]
Let Reduction \ref{assu:main} hold. Then, for any $L_x^2$-admissible pair $(q,r)$,
\EQn{\label{eq:linear-estimate-v}
\normb{|\nabla|^{\frac12}\V}_{L_t^q L_x^r(\R\times\R^3)} + \normb{|\nabla|^{\frac12}\V}_{L_t^{q,2} L_x^r(\R\times\R^3)}\lsm \de_0,
}
Furthermore, since $u_0$ is radial, we also have the endpoint Strichartz estimate
\EQn{\label{eq:linear-estimate-v-radial}
\norm{\V}_{L_t^2 L_x^\I(\R\times\R^3)}\lsm \de_0.
}
\end{lem}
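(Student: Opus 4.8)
The plan is to decompose $\V$ into its homogeneous Littlewood-Paley pieces and apply the Strichartz-type inputs from the preceding subsections to each piece, then recombine. Writing $\V = S(t)\V_+$ with $\V_+ = \F^{-1}\bar{v}_0$ and $v_0 = \chi_{\geq N_0}u_0$, the key point is that by \eqref{eq:pc-TS=SF} one has $\norm{\V_+}_{\dot H_x^{1/2}(\R^3)} = \norm{v_0}_{\F\dot H_x^{1/2}(\R^3)} = \norm{|x|^{1/2}\chi_{\geq N_0}u_0}_{L_x^2(\R^3)} \leq \de_0$, which is exactly \eqref{eq:finaldata-estimate-v}, so the needed control of the final data is already in hand. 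First I would observe that $|\nabla|^{1/2}\V = S(t)|\nabla|^{1/2}\V_+$ and that $\norm{|\nabla|^{1/2}\V_+}_{L_x^2(\R^3)} = \norm{\V_+}_{\dot H_x^{1/2}(\R^3)} \leq \de_0$.

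For the $L_t^q L_x^r$ bound, since $(q,r)$ is $L_x^2$-admissible, the ordinary Strichartz estimate \eqref{eq:strichartz-1} applied to the $L_x^2$ datum $|\nabla|^{1/2}\V_+$ gives directly
\EQ{
\normb{|\nabla|^{\frac12}\V}_{L_t^q L_x^r(\R\times\R^3)} = \normb{S(t)|\nabla|^{\frac12}\V_+}_{L_t^q L_x^r(\R\times\R^3)} \lsm \normb{|\nabla|^{\frac12}\V_+}_{L_x^2(\R^3)} \lsm \de_0.
}
For the Lorentz-refined piece $\normb{|\nabla|^{1/2}\V}_{L_t^{q,2}L_x^r}$, I would invoke the Lorentz version of the Strichartz estimate \eqref{eq:lorentz-strichartz-1} in place of \eqref{eq:strichartz-1}, again with the $L_x^2$ datum $|\nabla|^{1/2}\V_+$; this is precisely the statement observed by Nakanishi and recorded above, so no new work is required beyond substituting $|\nabla|^{1/2}\V_+$ for $\ph$. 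Summing the two contributions yields \eqref{eq:linear-estimate-v}.

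For the endpoint radial estimate \eqref{eq:linear-estimate-v-radial}, I would use that $v_0 = \chi_{\geq N_0}u_0$ is radial because $u_0$ is radial and $\chi_{\geq N_0}$ is a radial multiplier in physical space; hence $\V_+ = \F^{-1}\bar v_0$ is radial as well (the Fourier transform of a radial function is radial, and conjugation preserves radiality). Then Lemma \ref{lem:radial- strichartz} applies to the radial datum $\V_+ \in \dot H_x^{1/2}(\R^3)$, giving $\norm{\V}_{L_t^2 L_x^\I(\R\times\R^3)} = \norm{S(t)\V_+}_{L_t^2 L_x^\I(\R\times\R^3)} \lsm \norm{\V_+}_{\dot H_x^{1/2}(\R^3)} \lsm \de_0$, which is \eqref{eq:linear-estimate-v-radial}. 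The only mild subtlety, and the one point worth writing carefully, is tracking the radiality through the pseudo conformal correspondence $\V = \TT v$ with $v = S(t)v_0$: one should confirm that \eqref{eq:pc-TS=SF} is being used with a radial input and therefore produces a radial solution of the linear flow, so that the radial Strichartz estimate is legitimately available; everything else is a direct citation of the lemmas already established.
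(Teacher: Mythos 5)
Your proof is correct and follows precisely the paper's intended route: the paper itself just cites \eqref{eq:finaldata-estimate-v}, the Strichartz estimate \eqref{eq:strichartz-1}, and the radial Strichartz estimate of Lemma \ref{lem:radial- strichartz} as giving the lemma "combining" them, and you have filled in the (one-line) applications together with the Lorentz-refined Strichartz estimate \eqref{eq:lorentz-strichartz-1} for the $L_t^{q,2}L_x^r$ part. One small remark: once you write $\V = S(t)\V_+$ with $\V_+ = \F^{-1}\wb{v_0}$ radial, the endpoint estimate is immediate; the closing worry about "tracking radiality through the pseudo conformal correspondence" is a non-issue with that formulation, since you never need to pass through $\TT$ at all.
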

\begin{remark}
	Note that $\V$ is a high-frequency cut-off function. In fact, by the decomposition of initial data, we can observe that for any $t>0$, $\wh \V(t,\cdot)$ is supported on $\fbrk{\xi\in\R:|\xi|\gsm N_0}$. Therefore, we can express $\V$ as the sum:
	\EQ{
		\V=\sum_{N:N\gsm N_0} \V_N.
	}
	In the following, we will keep this point in mind, but for brevity, we will usually omit the lower bound $N_0$ when summing over $N$.
\end{remark}

\subsection{Local theory I: local well-posedness in the critical space}\label{sec:localI}
In this subsection, we prove the local well-posedness of 3D quadratic NLS in $\F \dot H^{1/2}$. This is ready included in the previous result \cite{KMMV17NoDEA}, and we give the proof for completeness. Moreover, we derive a $\dot H_x^{1/2}$-level estimate based on the $U_\De^2$-space. The local well-posedness is given as follows.
\begin{prop}\label{prop:localI}
Let Reduction \ref{assu:main} hold. Then, for sufficiently small $\de_1>0$,  there exists $T_0=T_0(\de_1,\U_+)>1$ such that the equation \eqref{eq:nls-pc} admits a unique solution 
$$\U\in C([T_0,\I); \dot H_x^\frac12(\R^d)),$$ 
such that
\EQ{
\normb{|\nabla|^{\frac12}\U}_{L_t^\I L_x^2 ([T_0,\I)\times\R^3)} \lsm 1,
}
and
\EQn{\label{eq:local-h1/2-smallness}
\normb{|\nabla|^{\frac12}\U}_{L_t^{4,2} L_x^3([T_0,\I)\times\R^3)} + \normb{\U}_{L_t^{8,2} L_x^4([T_0,\I)\times\R^3)}  \lsm \de_1.
}
Moreover, for any $L^2$-admissible $(q,r)$,
\EQn{\label{eq:local-h1/2}
\norm{\U}_{X^{\frac12}([T_0,+\I))\cap L_t^{q,2} L_x^r([T_0,+\I)\times\R^3)} \lsm 1,
}
where for any interval $I\subset\R$,
\EQn{\label{defn:xs}
\norm{\W}_{X^s(I)}:= \normb{N^s\norm{P_N\W}_{U_\De^2(I;L_x^2(\R^3))}}_{l_N^2}.
}
\end{prop}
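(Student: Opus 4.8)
The plan is to set up a contraction mapping / fixed-point argument for the Duhamel operator associated to the final-data problem \eqref{eq:nls-pc} at $\dot H_x^{1/2}$-regularity, working on $[T_0,\infty)$ with $T_0$ large. Concretely, I would define
\EQ{
\Phi(\U)(t) := S(t)\U_+ - i\int_{+\I}^t S(t-\ta)\brko{\ta^{-\frac12}|\U|\U}\dd\ta,
}
and seek a fixed point in the complete metric space
\EQ{
\mathcal B := \fbrkb{\U : \normb{|\nabla|^{\frac12}\U}_{L_t^{4,2}L_x^3 \cap L_t^{8,2}L_x^4\text{-type norms}} \le 2\de_1,\ \normb{|\nabla|^{\frac12}\U}_{L_t^\I L_x^2} + \norm{\U}_{X^{\frac12}} \le C_0},
}
with the metric coming from a single subcritical-in-time Strichartz norm (to avoid the endpoint issue for uniqueness). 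The smallness that powers the contraction does not come from small data (the data is only in $\dot H_x^{1/2}$, not small) but from the factor $\ta^{-1/2}$ together with the time integral starting at $+\infty$: on $[T_0,\infty)$ the $L_t^{q'}$-norm in time of $\ta^{-1/2}$-weighted quantities is small when $T_0$ is large, which is exactly the mechanism that makes the nonautonomous equation locally well-posed for large $t$ regardless of profile. So the first step is to fix $\de_1$, then choose $T_0=T_0(\de_1,\U_+)$ so that $\norm{|\nabla|^{1/2}S(t)\U_+}_{L_t^{4,2}L_x^3([T_0,\infty)\times\R^3)} \le \de_1/2$ (possible by the Lorentz Strichartz estimate \eqref{eq:lorentz-strichartz-1} and the dominated convergence theorem in the tail) and so that various $\ta^{-1/2}$ tail factors are $\le\de_1$.

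The core nonlinear estimate to be carried out is: for the nonlinearity $F(\U)=\ta^{-1/2}|\U|\U$,
\EQ{
\normb{|\nabla|^{\frac12}\int_{+\I}^t S(t-\ta)F(\U)\dd\ta}_{L_t^\I L_x^2 \cap X^{\frac12} \cap L_t^{4,2}L_x^3 \cap L_t^{8,2}L_x^4}([T_0,\infty))
\lsm \ep(T_0)\, \normb{|\nabla|^{\frac12}\U}_{L_t^{4,2}L_x^3}\norm{\U}_{L_t^{8,2}L_x^4},
}
with $\ep(T_0)\to0$. To get this I would: (i) move the half-derivative onto the nonlinearity via the fractional chain rule, Lemma \ref{lem:Frac_chain} (with $G(z)=|z|z$, $H(z)=|z|$, $\mu\equiv$ const), writing $\normb{|\nabla|^{1/2}(|\U|\U)}_{L_x^{r'}} \lsm \norm{\U}_{L_x^{a}}\norm{|\nabla|^{1/2}\U}_{L_x^{b}}$ with Hölder-compatible exponents; (ii) apply Hölder in time, peeling off $\ta^{-1/2}$ into an $L_t^{s}$-norm over $[T_0,\infty)$ which contributes the small constant $\ep(T_0)\sim T_0^{-s+1/s\cdots}$ (choosing the time-Strichartz exponents strictly off the endpoint so that this $L_t^s$ of $\ta^{-1/2}$ is finite and small); (iii) close the $X^{1/2}$ and $L_t^\I L_x^2$ pieces using the dual atom-space estimates — the Lorentz dual estimate \eqref{eq:lorentz-duality} for the $X^{1/2}$ bound (this is where one must input $F\in L_t^{q'}L_x^{r'}+L_t^{q',p}L_x^{r'}$ with $q\ne 2$, $p<2$), together with \eqref{eq:upvpduality-1}, \eqref{eq:strichartz-u2-dual}, square-function/Minkowski facts from Remark \ref{rem:square-function-spacetime}, and the tail-interval version of the Duhamel estimates noted in the remark after Lemma \ref{lem:strichartz}; (iv) recover the $L_t^{q,2}L_x^r$ Strichartz bounds for all admissible $(q,r)$ from the $X^{1/2}$ and $U_\De^2$ control via \eqref{eq:strichartz-u2}, \eqref{eq:lorentz-strichartz-up}, once the solution is in hand. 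Uniqueness in $C([T_0,\infty);\dot H_x^{1/2})$ follows by the same contraction estimate run on the difference $\Phi(\U_1)-\Phi(\U_2)$ (using $||\U_1|\U_1-|\U_2|\U_2|\lsm(|\U_1|+|\U_2|)|\U_1-\U_2|$ and Lemma \ref{lem:Frac_chain-modify} for the derivative on the difference), plus a standard continuity/bootstrap argument to remove any dependence of $T_0$ from the a priori bounds in the uniqueness class.

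I expect the main obstacle to be the treatment of the $X^{1/2}$ (atom-space) norm of the Duhamel term under the Lorentz-modified framework: the nonlinearity $|\U|\U$ is not smooth, so the fractional chain rule is essential and forces non-endpoint exponents, but then controlling $\normb{|\nabla|^{1/2}\text{Duhamel}}_{X^{1/2}}$ requires the dual estimate \eqref{eq:lorentz-duality}, whose hypotheses ($q\ne2$, $1\le p<2$, and $F$ in the sum space $L_t^{q'}L_x^{r'}+L_t^{q',p}L_x^{r'}$) must be matched exactly against what the chain rule and Hölder-in-time produce — in particular one has to split $F$ into a piece handled by the plain dual-$U_\De^2$ Strichartz estimate and a Lorentz piece, and verify the time integrability of $\ta^{-1/2}$ in the relevant (possibly Lorentz) time norm over $[T_0,\infty)$. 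A secondary technical point is getting genuine smallness (not just finiteness) of $\norm{|\nabla|^{1/2}S(t)\U_+}_{L_t^{4,2}L_x^3([T_0,\infty))}$: this is not a smallness-of-data statement but a smallness-of-tail statement, justified by \eqref{eq:lorentz-strichartz-1} plus dominated convergence, and it is what ultimately lets the $\ta^{-1/2}$-decay drive the contraction for $T_0$ large. Everything else — Hölder bookkeeping, square-function reductions, passing from $U_\De^2$ control to all admissible Lorentz–Strichartz norms — is routine given the lemmas already assembled in Section \ref{sec:prel}.
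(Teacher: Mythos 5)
Your proposed fixed-point scheme matches the paper's in its essential structure — contraction on $[T_0,\infty)$ with a resolution space carrying both a "large" $X^{1/2}$-type bound and a "small" $\wt X$-type bound, the small bound seeded by the Lorentz Strichartz tail of the linear flow $\norm{S(t)\U_+}_{\wt X([T_0,\infty))}\le\de_1$, the fractional chain rule for the non-algebraic nonlinearity, and the dual atom-space estimate \eqref{eq:lorentz-duality} to get the $X^{1/2}$ piece. The paper's proof closes exactly as you sketch, with uniqueness via differences.

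There is, however, one conceptual error worth flagging: you write that the smallness powering the contraction comes from the factor $\ta^{-1/2}$ integrated from $+\infty$, claiming the relevant $L_t^s$-norm of $\ta^{-1/2}$ over $[T_0,\infty)$ is small for $T_0$ large and can be made $\le\de_1$ "by choosing the time-Strichartz exponents strictly off the endpoint." This is not available at the critical scaling. To keep every spacetime norm of $\U$ at $\dot H^{1/2}$-level (which is all you have from $\U_+\in\dot H^{1/2}$), the Hölder exponent for $\ta^{-1/2}$ is forced by dimensional analysis to be exactly $s=2$, and $\ta^{-1/2}$ is not in $L_t^2([T_0,\infty))$ — that is precisely why the Lorentz modification is needed. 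The only available space is $L_t^{2,\infty}([T_0,\infty))$, and a direct computation shows $\normo{t^{-1/2}}_{L_t^{2,\infty}([T_0,\infty))}$ is a fixed constant, entirely independent of $T_0$. The paper uses it only as an $O(1)$ factor (see \eqref{eq:localI-1}). The mechanism that actually closes the argument is quadraticity: the nonlinear estimate reads $\lsm C\norm{\U}_{\wt X}^2\le C(2\de_1)^2$, which is $\ll\de_1$ because the nonlinear term comes with two factors of $\wt X$, and the linear tail seeds $\norm{\U}_{\wt X}\lsm\de_1$. Since you also invoke the linear-tail mechanism, your proof would still close — but with a fixed $O(1)$ Duhamel constant rather than a decaying $\ep(T_0)$. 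The distinction is not cosmetic: if the nonlinearity were not strictly superlinear in the small norm, the argument you describe would have no way to close, whereas you present the $\ta^{-1/2}$ tail decay as an independent source of smallness that simply is not there.
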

\begin{proof}
For any interval $I\subset\R$, we define that
\EQ{
\norm{\U}_{\wt X(I)}:=  \normb{|\nabla|^{\frac12}\U}_{L_t^{4,2} L_x^3(I\times\R^3)} + \normb{\U}_{L_t^{8,2} L_x^4(I\times\R^3)} .
}
Let 
\EQ{
R:= C\norm{\U_+}_{\dot H_x^{\frac12}(\R^3)},
}
and we can assume that $R\gsm 1$, otherwise the global well-posedness already follows by small data theory. By the Strichartz estimate \eqref{eq:lorentz-strichartz-1}, for sufficiently small $\de_1>0$, there exists a large $T_0=T_0(\de_1,\U_+)>0$ such that
\EQn{\label{eq:localI-linear}
\norm{S(t)\U_+}_{\wt X([T_0,+\I))}\le \de_1.	
}
Denote that $I:=[T_0,\I)$.
Recall the Duhamel's formula:
\EQ{
	\U
	=S(t)\U_+-i\int_{+\I}^t  \ta^{-\frac12}|\U(\ta)|\U(\ta)\dd \ta =: \Psi_{\U_+}(\U).
}
Take the resolution space as
\EQ{
B_{R,\de_1} := \fbrk{\U\in C(I;\dot H_x^{\frac12}(\R^3)): \norm{\U}_{X^{\frac12}(I)}\le2R\text{, and }\norm{\U}_{\wt X(I)}\le 2\de_1}.
}
	
Now, we prove that $\Psi_{\U_+}$ maps $B_{R,\de_1}$ into itself. First, note that by H\"older's inequality for Lorentz space in $t$ and Sobolev's inequality in $x$,
\EQn{\label{eq:localI-1}
\normb{t^{-\frac12}\U}_{L_t^{\frac43,2}L_x^6(I\times\R^3)}\lsm \normb{ t^{-\frac12}}_{L_t^{2,\I}(I)} \normb{|\nabla|^{\frac12}\U}_{L_t^{4,2}L_x^3(I\times\R^3)} \lsm \norm{\U}_{\wt X(I)}.
}
Now, by the Strichartz estimates \eqref{eq:upvpduality-1} and \eqref{eq:lorentz-duality},
\EQn{\label{esti:localI-X-1}
\norm{\Psi_{\U_+}(\U)}_{X^{\frac12}(I)} \le & C\norm{\U_+}_{\dot H_x^{\frac12}(\R^3)} + C \normb{t^{-\frac12}|\nabla|^{\frac12}(|\U|\U)}_{L_t^{1} L_x^{2}(I\times\R^3)} \\
\le & R + C \normb{t^{-\frac12}|\nabla|^{\frac12}(|\U|\U)}_{L_t^{1} L_x^{2}(I\times\R^3)}.
}
By the fractional chain rule in Lemma \ref{lem:Frac_chain}, H\"older's inequality in $t$, and \eqref{eq:localI-1},
\EQn{\label{esti:localI-X-2}
\normb{t^{-\frac12}|\nabla|^{\frac12}(|\U|\U)}_{L_t^{1} L_x^{2}(I\times\R^3)} \le & C  \normb{|\nabla|^{\frac12}\U}_{L_t^{4,2} L_x^{3}(I\times\R^3)} \normb{t^{-\frac12}\U}_{L_t^{\frac43,2}L_x^6(I\times\R^3)} \\
\le & C\norm{\U}_{\wt X(I)}^2 \\
\le & C\de_1^2.
}
Noting that $0<\de_1\ll1$, by \eqref{esti:localI-X-1} and \eqref{esti:localI-X-2},
\EQn{\label{eq:localI-X}
\norm{\Psi_{\U_+}(\U)}_{X(I)} \le 2R.
}
Similarly, by \eqref{eq:localI-linear}, the Strichartz estimate \eqref{eq:lorentz-strichartz-2}, and \eqref{esti:localI-X-2},
\EQn{\label{eq:localI-Xtilde}
\norm{\Psi_{\U_+}(\U)}_{\wt X(I)} \le & \norm{S(t)\U_+}_{\wt X(I)} +
C \normb{t^{-\frac12}|\nabla|^{\frac12}(|\U|\U)}_{L_t^{1} L_x^{2}(I\times\R^3)} \\
\le & \de_1 + C\norm{\U}_{\wt X(I)}^2 \\
\le & \de_1 + C\de_1^2 \\
\le & 2\de_1.
}
\eqref{eq:localI-X} and \eqref{eq:localI-Xtilde} imply that $\Psi_{\U_+}$ maps $B_{R,\de_1}$ into itself. By similar argument, we are able to prove that $\Psi_{\U_+}$ is a contraction mapping on $B_{R,\de_1}$. This shows the local well-posedness part.

Next, we prove \eqref{eq:local-h1/2}. By the Strichartz estimates \eqref{eq:lorentz-strichartz-1}, \eqref{eq:lorentz-strichartz-2}, and \eqref{esti:localI-X-2},
\EQ{
\norm{\U}_{L_t^{q,2} L_x^r(I\times\R^3)} 
\lsm & R+  \normb{t^{-\frac12} |\nabla|^{\frac12}(|\U|\U)}_{L_t^{1}L_x^{2}(I\times\R^3)} 
\lsm  R.
}
As for the $X^{1/2}$-norm, by \eqref{eq:upvpduality-1}, \eqref{eq:lorentz-duality}, Minkowski's inequality, Lemma \ref{lem:square-function}, and \eqref{esti:localI-X-2}, 
\EQ{
\norm{\U}_{X^{1/2}(I)} \lsm & \norm{\U_+}_{\dot H_x^{\frac12}(\R^3)} +  \normB{N^{\frac12}\normb{t^{-\frac12} P_N(|\U|\U)}_{L_t^{1}L_x^{2}(I\times\R^3)}}_{l_N^2} \\
\lsm & R +  \normb{t^{-\frac12} |\nabla|^{\frac12}(|\U|\U)}_{L_t^{1}L_x^{2}(I\times\R^3)} \\
\lsm & R.
}
This completes the proof of \eqref{eq:local-h1/2}.
\end{proof}

\subsection{Key nonlinear estimates}\label{sec:nonlinear-estimate}
Now, we give some nonlinear estimates and their variants, which will be used frequently in the following.
\begin{prop}[Nonlinear estimates]\label{prop:nonlinear-estimate}
Let Reduction \ref{assu:main} hold. Suppose all the spacetime norms are restricted on $I\times\R^3$ with some bounded interval $I=[a,b]$ and $0<a<b$. Then, the following hold.
\begin{enumerate}
\item Let $\U_1\in \big(L_t^4 L_x^6 + L_t^\I \dot H_x^{1}\big)\cap L_t^\I \dot H_x^{\frac12}$, then
\EQn{\label{eq:nonl-esti-nabla1}
\normb{t^{-\frac12}\U_1\nabla\V}_{L_t^{\frac43} L_x^{\frac32} + L_t^{\frac{4}{3+6\ep}}L_x^{\frac{6}{5-6\ep}}} \lsm & a^{-\frac12} \de_0 \max\fbrk{\norm{\U_1}_{L_t^4 L_x^6}, |I|^{\frac14} \norm{\U_1}_{L_t^\I \dot H_x^1}} \\
& + a^{-\frac12} |I|^{\frac14+\frac32\ep} \de_0 \norm{\U_1}_{L_t^\I \dot H_x^{\frac12}}.
}
\item We have that
\EQn{\label{eq:nonl-esti-nabla1/2}
\normb{t^{-\frac12}|\nabla|^{\frac12}(|\U|\U)}_{L_t^{\frac43} L_x^{\frac32}} \lsm & a^{-\frac12} |I|^{\frac12}\de_0(\de_0 + \norm{\W}_{L_t^\I \dot H_x^{\frac12}}) \\
&+ a^{-\frac12} |I|^{\frac34} (\de_0 + \norm{\W}_{L_t^\I \dot H_x^{\frac12}}) \norm{\W}_{L_t^\I \dot H_x^1}.
}
\end{enumerate}
\end{prop}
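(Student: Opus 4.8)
\textbf{Proof proposal for Proposition \ref{prop:nonlinear-estimate}.}

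The plan is to reduce both estimates to H\"older's inequality in time combined with a single space–time bound for the factor carrying a half (or a full) derivative of $\V$, estimated via the radial Sobolev inequality (Lemma \ref{lem:radial-sobolev}) and the linear estimates \eqref{eq:linear-estimate-v}, \eqref{eq:linear-estimate-v-radial}. For part (1), since $t\in I=[a,b]$ with $a>0$, the weight obeys $t^{-1/2}\le a^{-1/2}$ and can be pulled out. The product $\U_1\nabla\V$ is estimated in the dual Strichartz space $L_t^{4/3}L_x^{3/2}$ (which pairs with the admissible $(4,3)$) on the portion where $\U_1\in L_t^4L_x^6$, and in $L_t^{4/(3+6\ep)}L_x^{6/(5-6\ep)}$ on the portion where one uses the $\dot H_x^1$ or $\dot H_x^{1/2}$ norm of $\U_1$. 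I would write $\nabla\V = |\nabla|^{1/2}\big(|\nabla|^{1/2}\V\big)$ only where convenient; more directly, I place $\nabla\V$ in $L_t^2L_x^{6}$ by interpolating/using that $(2,6)$ is $\dot H^1$-admissible, so $\norm{\nabla\V}_{L_t^2L_x^6}\lesssim \norm{|\nabla|^{1/2}\V}_{L_t^2L_x^{?}}$—but the cleaner route is to keep $\V$ at $\dot H^{1/2}$-level throughout and instead move a half-derivative onto $\U_1$-type structure is not available since $\U_1$ need not be differentiable beyond what is assumed; hence I instead estimate $\nabla\V$ using that $\V$ is \emph{high-frequency supported} (Remark after the Linear estimate), so $\norm{\nabla\V}_{Y}\lesssim \norm{|\nabla|^{1/2}\V}_{Y}$ up to the harmless factor absorbed into $\de_0$, with $Y$ a suitable Strichartz norm controlled by \eqref{eq:linear-estimate-v}. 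Then H\"older in time against $\U_1\in L_t^4L_x^6$ gives the first term; against $\U_1\in L_t^\I\dot H_x^1\hookrightarrow L_t^\I L_x^6$ (Sobolev) and $\U_1\in L_t^\I\dot H_x^{1/2}\hookrightarrow L_t^\I L_x^3$ gives the remaining two, each time paying a power $|I|^{1/4}$ or $|I|^{1/4+3\ep/2}$ for the temporal H\"older loss and adjusting the spatial exponent by $\ep$ to keep the pair admissible.

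For part (2), I again factor out $t^{-1/2}\le a^{-1/2}$ and apply the fractional chain rule (Lemma \ref{lem:Frac_chain}) to $|\nabla|^{1/2}(|\U|\U)$, writing $\U=\V+\W$ and using the variant with separate inputs (Lemma \ref{lem:Frac_chain-modify}) to distribute the half-derivative onto $\V$ or onto $\W$:
\EQ{
\normb{|\nabla|^{\frac12}(|\U|\U)}_{L_x^{\frac32}} \lsm \norm{\U}_{L_x^6}\normb{|\nabla|^{\frac12}\V}_{L_x^{?}} + \norm{\U}_{L_x^{?}}\normb{|\nabla|^{\frac12}\W}_{L_x^{?}},
}
with exponents chosen so that each product sits in $L_x^{3/2}$. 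On the $\V$-branch I use $\norm{\V}_{L_t^\I\dot H_x^{1/2}}\lesssim\de_0$ together with $\norm{\W}_{L_t^\I\dot H_x^{1/2}}$ to bound $\norm{\U}_{L_x^6}$ via Sobolev $\dot H_x^1\hookrightarrow L_x^6$ or via $\dot H_x^{1/2}\hookrightarrow L_x^3$ plus an extra H\"older factor in $t$, yielding the $|I|^{1/2}$ term; on the $\W$-branch I bound $\norm{|\nabla|^{1/2}\W}$ by $\norm{\W}_{L_t^\I\dot H_x^1}$ (with an $|I|$-power for the time H\"older) and the other input by $\de_0+\norm{\W}_{L_t^\I\dot H_x^{1/2}}$, producing the $|I|^{3/4}$ term. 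Throughout, the high-frequency support of $\V$ is what lets me trade a $|\nabla|^{1/2}$ for a cheaper norm when needed, and the two linear estimates \eqref{eq:linear-estimate-v}, \eqref{eq:linear-estimate-v-radial} supply the $\de_0$ smallness on every $\V$-factor.

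The main obstacle I anticipate is bookkeeping the exponents: making sure that after splitting $\U=\V+\W$, applying the separated fractional chain rule, and inserting the Sobolev embeddings, every Lebesgue pair is honestly $L^2$- or $\dot H^s$-admissible (so that \eqref{eq:linear-estimate-v} applies to the $\V$-factors and Strichartz/Sobolev applies to the $\W$-factors), while simultaneously tracking the precise powers of $|I|$ and of $a^{-1/2}$ and the $\ep$-shifts in part (1). There is no deep difficulty here—no smoothing or bilinear estimate is needed at this stage, since $t$ is bounded away from $0$ and the interval $I$ is finite—but the $\ep$-perturbed exponents $\frac{6}{5-6\ep}$ and the matching temporal loss $|I|^{1/4+3\ep/2}$ must be derived by solving the admissibility relation $\frac2q+\frac3r=\frac12$ together with the H\"older identities in $t$ and $x$, and then checked to collapse to the stated bound as $\ep\to0$. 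The contraction/uniqueness aspects are not needed here; only the two a priori inequalities, which follow once the exponent arithmetic is pinned down.
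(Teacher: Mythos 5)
Your plan for part (2) is essentially the paper's: factor out $t^{-1/2}\le a^{-1/2}$, apply the separated fractional chain rule (Lemma \ref{lem:Frac_chain-modify}), and distribute the half-derivative onto $\V$ (paired with $\||\nabla|^{1/2}\V\|_{L_t^4L_x^3}\lesssim\de_0$ and $\norm{\cdot}_{L^\I_tL^3_x}$ for the other factor via $\dot H^{1/2}\hookrightarrow L^3$) or onto $\W$ (using $\||\nabla|^{1/2}\W\|_{L^\I_tL^3_x}\lesssim\|\W\|_{L^\I_t\dot H^1_x}$). Your exponent bookkeeping is vague — for instance the undifferentiated factor sits in $L_x^3$, not $L_x^6$, and there is no "Hölder factor in $t$" that turns an $L^3$ bound into an $L^6$ one — but the skeleton is right.

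Part (1) contains a genuine error that your argument cannot survive. You write that because $\V$ is high-frequency supported (i.e.\ $\widehat{\V}$ lives at $|\xi|\gtrsim N_0$) you may replace $\norm{\nabla\V}_{Y}$ by $\norm{|\nabla|^{1/2}\V}_{Y}$ "up to a harmless factor absorbed into $\de_0$", and you explicitly assert that "no smoothing or bilinear estimate is needed." This is backwards. High-frequency support gives $\norm{\nabla\V}\gtrsim N_0^{1/2}\norm{|\nabla|^{1/2}\V}$, not $\lesssim$: adding derivatives to a high-frequency function makes the norm \emph{larger}, by a factor that grows with $N_0$. The reduction assumption \eqref{eq:initialdata-estimate-v} only makes $\norm{\V_+}_{\dot H^{1/2}}\le\de_0$ small; $\norm{\V_+}_{\dot H^1}$ is of size $\sim N_0^{1/2}\de_0$, which is not small, and Strichartz estimates applied to $\nabla\V$ will inherit exactly that factor. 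So a bare Hölder-plus-Strichartz bound on $t^{-1/2}\U_1\nabla\V$ produces $N_0^{1/2}\de_0\cdot\norm{\U_1}_{\cdots}$, which does not match the claimed right-hand side.

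The missing ingredient is precisely what you dismissed: the local smoothing estimate (Lemma \ref{lem:local-smoothing}), which gains back the half-derivative on $\nabla\V$ in localized $L^2_{t,x}$ norms. The paper decomposes $1=\chi_{\le0}+\sum_{j\ge1}\chi_j$ in $x$; on $\{|x|\lesssim1\}$ it bounds $\norm{\chi_{\le0}\nabla\V}_{L_{t,x}^2}\lesssim\norm{\V_+}_{\dot H^{1/2}}\le\de_0$ by local smoothing and pairs this with $\U_1\in L_t^4L_x^6$ (or $L^\I_t\dot H^1_x\hookrightarrow L^\I_tL^6_x$ after Hölder in $t$). On each shell $\{|x|\sim 2^j\}$, $j\ge1$, it uses $\norm{2^{-j/2}\widetilde\chi_j\nabla\V}_{L_{t,x}^2}\lesssim\de_0$ (again local smoothing) together with the radial Sobolev inequality $\norm{|x|^{1/2+3\ep}\U_1}_{L_x^{6/(1-6\ep)}}\lesssim\norm{\U_1}_{\dot H^{1/2}}$, and the leftover factor $2^{-3\ep j}$ sums the shells. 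No Strichartz estimate on $\nabla\V$ alone can close this; the $L^2_{t,x}$-in-space localization is essential. You would need to bring the dyadic-in-$x$ decomposition and the local smoothing gain into your argument before the exponent arithmetic you describe can even begin.
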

\begin{proof}
We first prove \eqref{eq:nonl-esti-nabla1}. To start with, we make the spatial localization:
\EQn{\label{eq:nonl-esti-nabla1-1}
\normb{t^{-\frac12} \U_1\nabla\V}_{L_t^{\frac43} L_x^{\frac32} + L_t^{\frac{4}{3+6\ep}}L_x^{\frac{6}{5-6\ep}}} \lsm  \normb{t^{-\frac12} \chi_{\le 0} \U_1\nabla\V}_{L_t^{\frac43} L_x^{\frac32}}  + \normb{\sum_{j=1}^{+\I}t^{-\frac12} \chi_j\U_1 \wt\chi_j\nabla\V}_{ L_t^{\frac{4}{3+6\ep}}L_x^{\frac{3}{2-3\ep}}}.
}
For the first term, if $\U_1\in L_t^4 L_x^6$, by H\"older's inequality and Lemma \ref{lem:local-smoothing}, we have that
\EQn{\label{eq:nonl-esti-nabla1-2-low-1}
\normb{t^{-\frac12} \chi_{\le 0} \U_1 \nabla\V}_{L_t^{\frac43} L_x^{\frac32}} \lsm & a^{-\frac12} \norm{\U_1}_{L_t^4 L_x^6} \normb{\chi_{\le 0}\nabla\V}_{L_{t,x}^{2}} \\
\lsm & a^{-\frac12} \norm{\U_1}_{L_t^4 L_x^6} \norm{\V_+}_{\dot H_x^{\frac12}}  \\
\lsm & a^{-\frac12} \de_0 \norm{\U_1}_{L_t^4 L_x^6},
}
or if $\U_1\in  L_t^\I \dot H_x^{1}$, by Sobolev's inequality,
\EQn{\label{eq:nonl-esti-nabla1-2-low-2}
\normb{t^{-\frac12}\chi_{\le 0}\U_1\nabla\V}_{L_t^{\frac43} L_x^{\frac32}} 
\lsm & a^{-\frac12} |I|^{\frac14} \norm{\U_1}_{L_t^\I L_x^6} \normb{\chi_{\le 0}\nabla\V}_{L_{t,x}^{2}} \\
\lsm & a^{-\frac12} |I|^{\frac14} \de_0 \norm{\U_1}_{L_t^\I \dot H_x^1}.
}
For the second term, by the H\"older's inequality and Lemma \ref{lem:local-smoothing},
\EQn{\label{eq:nonl-esti-nabla1-2-high}
\normb{\sum_{j=1}^{+\I}t^{-\frac12} \chi_j \U_1 \wt\chi_j\nabla\V}_{ L_t^{\frac{4}{3+6\ep}}L_x^{\frac{3}{2-3\ep}}} \lsm & a^{-\frac12} |I|^{\frac14+\frac32\ep} \sum_{j=1}^{+\I} \normb{2^{\frac12j}\chi_j\U_1}_{L_t^{\I} L_x^{\frac{6}{1-6\ep}}} \normb{2^{-\frac12j}\wt\chi_j\nabla\V}_{ L_{t,x}^2} \\
\lsm & a^{-\frac12} |I|^{\frac14+\frac32\ep} \de_0 \sum_{j=1}^{+\I} 2^{-3\ep j} \normb{2^{(\frac12+3\ep)j}\chi_j\U_1}_{L_t^{\I} L_x^{\frac{6}{1-6\ep}}} \\
\lsm & a^{-\frac12} |I|^{\frac14+\frac32\ep} \de_0 \sup_{j=1,2,3,\cdots} \normb{2^{(\frac12+3\ep)j}\chi_j\U_1}_{L_t^{\I} L_x^{\frac{6}{1-6\ep}}}.
} 
By the radial Sobolev's inequality in Lemma \ref{lem:radial-sobolev} and \eqref{eq:linear-estimate-v},
\EQn{\label{eq:nonl-esti-nabla1-2-high-1}
	\normb{2^{(\frac12+3\ep)j}\chi_j\U_1}_{L_x^{\frac{6}{1-6\ep}}} \lsm  \normb{|x|^{\frac12+3\ep}\chi_j\U_1}_{L_x^{\frac{6}{1-6\ep}}} 
	\lsm  \normb{|x|^{\frac12+3\ep}\U_1}_{L_x^{\frac{6}{1-6\ep}}} 
	\lsm   \norm{\U_1}_{\dot H_x^{\frac12}}.
}
By \eqref{eq:nonl-esti-nabla1-2-high} and \eqref{eq:nonl-esti-nabla1-2-high-1},
\EQn{\label{eq:nonl-esti-nabla1-2-high-2}
\normb{\sum_{j=1}^{+\I}t^{-\frac12} \chi_j\U_1 \wt\chi_j\nabla\V}_{ L_t^{\frac{4}{3+6\ep}}L_x^{\frac{6}{5-6\ep}}} \lsm & a^{-\frac12} |I|^{\frac14+\frac32\ep} \de_0 \norm{\U_1}_{L_t^\I \dot H_x^{\frac12}}.
}
Therefore, \eqref{eq:nonl-esti-nabla1} follows by \eqref{eq:nonl-esti-nabla1-1}, \eqref{eq:nonl-esti-nabla1-2-low-1}, \eqref{eq:nonl-esti-nabla1-2-low-2}, and \eqref{eq:nonl-esti-nabla1-2-high-2}.

Next, we prove \eqref{eq:nonl-esti-nabla1/2}. By the fractional chain rule in Lemma \ref{lem:Frac_chain-modify}, \eqref{eq:linear-estimate-v}, and Sobolev's inequality,
\EQ{
\normb{t^{-\frac12}|\nabla|^{\frac12}(|\U|\U)}_{L_t^{\frac43} L_x^{\frac32}} 
\le &  a^{-\frac12} |I|^{\frac12} \brkb{\norm{\V}_{L_t^\I L_x^3} + \norm{\W}_{L_t^\I L_x^3} }  \normb{|\nabla|^{\frac12}\V}_{L_t^{4} L_x^3} \\
& + a^{-\frac12} |I|^{\frac34}  \brkb{\norm{\V}_{L_t^\I L_x^3} +\norm{\W}_{L_t^\I L_x^3}} \normb{|\nabla|^{\frac12}\W}_{L_t^\I L_x^3} \\
\le & a^{-\frac12} |I|^{\frac12}\de_0(\de_0 + \norm{\W}_{L_t^\I \dot H_x^{\frac12}}) \\
& + a^{-\frac12} |I|^{\frac34} (\de_0 + \norm{\W}_{L_t^\I \dot H_x^{\frac12}}) \norm{\W}_{L_t^\I \dot H_x^1}.
}
Therefore, we finish the proof of this proposition.
\end{proof}
Next, we give some variants of the nonlinear estimates in Proposition \ref{prop:nonlinear-estimate}.
\begin{cor}\label{cor:nonlinear-estimate}
Let Reduction \ref{assu:main} hold. Then, we have the following hold:
\begin{enumerate}
\item Suppose all the spacetime norms are restricted on $I\times\R^3$ with some bounded interval $I=[a,b]$ and $0<a<b$. Then, 
\EQn{\label{eq:nonl-esti-vnablav}
\normb{t^{-\frac12} \V\nabla\V}_{L_t^{\frac43} L_x^{\frac32} + L_t^{\frac{4}{3+6\ep}}L_x^{\frac{3}{2-3\ep}}} \lsm a^{-\frac12} \de_0^2 + a^{-\frac12} |I|^{\frac14+\frac32\ep} \de_0^2,
}
and
\EQn{\label{eq:nonl-esti-wnablav}
\normb{t^{-\frac12} \W\nabla\V}_{L_t^{\frac43} L_x^{\frac32} + L_t^{\frac{4}{3+6\ep}}L_x^{\frac{3}{2-3\ep}}} \lsm & a^{-\frac12} |I|^{\frac14} \de_0 \norm{\W}_{L_t^\I \dot H_x^1}  + a^{-\frac12} |I|^{\frac14+\frac32\ep} \de_0 \norm{\W}_{L_t^\I\dot H_x^{\frac12}}.
}
\item 
Suppose all the spacetime norms are restricted on $[T_0,+\I)\times\R^3$. Then, we have
\EQn{\label{eq:nonl-esti-vnablav-timeinfty}
\normb{t^{-\frac12} \V\nabla\V}_{L_t^{\frac43} L_x^{\frac32} + L_t^{\frac{4}{3+6\ep}}L_x^{\frac{3}{2-3\ep}}} \lsm T_0^{-\frac12} \de_0^2 +  T_0^{-\frac14+\frac32\ep} \de_0^2,
}
and
\EQn{\label{eq:nonl-esti-wnablav-timeinfty}
\normb{t^{-\frac12} \W\nabla\V}_{L_t^{\frac43} L_x^{\frac32} + L_t^{\frac{4}{3+6\ep}}L_x^{\frac{3}{2-3\ep}}} \lsm  T_0^{-\frac14} \de_0 \norm{\W}_{L_t^\I \dot H_x^1}  + T_0^{-\frac14+\frac32\ep} \de_0 \norm{\W}_{L_t^\I\dot H_x^{\frac12}}.
}
\item Suppose all the spacetime norms are restricted on $I\times\R^3$ with some bounded interval $I=[a,b]$ and $0<a<b$. Then,
\EQn{\label{eq:nonl-esti-vnablav-frequency}
& \normb{t^{-\frac12} \V\norm{N\V_N}_{l_N^2}}_{L_t^{\frac43} L_x^{\frac32} + L_t^{\frac{4}{3+6\ep}}L_x^{\frac{3}{2-3\ep}}} + \normb{t^{-\frac12} \sup_{N_1\in 2^\Z}|\V_{N_1}|\norm{N\V_N}_{l_N^2}}_{L_t^{\frac43} L_x^{\frac32} + L_t^{\frac{4}{3+6\ep}}L_x^{\frac{3}{2-3\ep}}} \\
\lsm & a^{-\frac12} \de_0^2 + a^{-\frac12} |I|^{\frac14+\frac32\ep} \de_0^2.
}
\end{enumerate} 
\end{cor}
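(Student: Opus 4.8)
The three estimates are all variants of Proposition~\ref{prop:nonlinear-estimate}, so the plan is to reduce each to \eqref{eq:nonl-esti-nabla1} (or to its proof) together with the linear bounds \eqref{eq:linear-estimate-v}. For part (1), \eqref{eq:nonl-esti-vnablav} follows by applying \eqref{eq:nonl-esti-nabla1} with $\U_1=\V$ and using the $L_t^4L_x^6$ alternative: by Sobolev's inequality $\norm{\V}_{L_t^4L_x^6}\lsm\normb{|\nabla|^{\frac12}\V}_{L_t^4L_x^3}\lsm\de_0$ and $\norm{\V}_{L_t^\I\dot H_x^{1/2}}\lsm\de_0$, both instances of \eqref{eq:linear-estimate-v}; substituting these into \eqref{eq:nonl-esti-nabla1} produces exactly $a^{-\frac12}\de_0^2+a^{-\frac12}|I|^{\frac14+\frac32\ep}\de_0^2$. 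For \eqref{eq:nonl-esti-wnablav} I would instead take $\U_1=\W$ and keep the $\dot H_x^1$ alternative $|I|^{\frac14}\norm{\U_1}_{L_t^\I\dot H_x^1}$ together with the $\dot H_x^{1/2}$ term, which gives the two claimed contributions.

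For part (2), the only new issue is to pass from a bounded interval to $[T_0,+\I)$ with the sharp power of $T_0$. I would decompose $[T_0,+\I)=\bigcup_{k\ge k_0}[2^k,2^{k+1})$ with $2^{k_0}\sim T_0$ and apply \eqref{eq:nonl-esti-vnablav}--\eqref{eq:nonl-esti-wnablav} on each piece, where $a\sim|I|\sim 2^k$. Since the left-hand sides are sums of space-time Lebesgue norms with finite time exponent, the norm over $[T_0,+\I)$ is at most the sum over $k$ of the norms over the pieces (bounding in addition each $L_t^\I$-norm on a piece by the one on $[T_0,+\I)$); on $[2^k,2^{k+1})$ the right side of \eqref{eq:nonl-esti-vnablav} is $\sim(2^{-\frac k2}+2^{k(-\frac14+\frac32\ep)})\de_0^2$ and that of \eqref{eq:nonl-esti-wnablav} is $\sim 2^{-\frac k4}\de_0\norm{\W}_{L_t^\I\dot H_x^1}+2^{k(-\frac14+\frac32\ep)}\de_0\norm{\W}_{L_t^\I\dot H_x^{1/2}}$. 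The geometric series in $k$ converge precisely because $-\frac14+\frac32\ep<0$ when $\ep<\frac16$, and summing yields \eqref{eq:nonl-esti-vnablav-timeinfty}--\eqref{eq:nonl-esti-wnablav-timeinfty}.

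For part (3), \eqref{eq:nonl-esti-vnablav-frequency} is \eqref{eq:nonl-esti-vnablav} with $\nabla\V$ replaced by the square function $\norm{N\V_N}_{l_N^2}$ and the outer factor replaced by $\V$ or $\sup_{N_1}|\V_{N_1}|$, so I would re-run the proof of \eqref{eq:nonl-esti-vnablav}. All local-smoothing steps survive after the elementary interchange $\normb{\norm{\cdot}_{l_N^2}}_{L_{t,x}^2}=\norm{\cdot}_{l_N^2L_{t,x}^2}$: the localized local smoothing following Lemma~\ref{lem:local-smoothing} gives $\sup_{j}2^{-\frac j2}\normb{\chi_j\norm{N\V_N}_{l_N^2}}_{L_{t,x}^2}\lsm\normb{N^{\frac12}\norm{P_N\V_+}_{L_x^2}}_{l_N^2}=\norm{\V_+}_{\dot H_x^{1/2}}\lsm\de_0$, and likewise with $\chi_{\le 0}$ in place of $\chi_j$. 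For the outer factor: when it is $\V$ the needed bounds are those of part (1); when it is $\sup_{N_1}|\V_{N_1}|=\sup_{N_1}|P_{N_1}\V|$, the $L_t^4L_x^6$ bound is $\lsm\de_0$ by Lemma~\ref{lem:linfty-littewoodpaley} applied pointwise in $t$, while the weighted bound used in the region $|x|\gsm 1$, namely $\normb{|x|^{\frac12+3\ep}\sup_{N_1}|\V_{N_1}|}_{L_x^{6/(1-6\ep)}}\lsm\de_0$, follows from $\sup_{N_1}|\V_{N_1}|\le\big(\sum_{N_1}|\V_{N_1}|^2\big)^{1/2}$, Minkowski's inequality (swapping $L_x^{6/(1-6\ep)}l_{N_1}^2$ into $l_{N_1}^2L_x^{6/(1-6\ep)}$, legitimate since $6/(1-6\ep)\ge 2$), the radial Sobolev inequality of Lemma~\ref{lem:radial-sobolev} applied to each radial piece $\V_{N_1}$, and the almost orthogonality $\sum_{N_1}\normb{|\nabla|^{\frac12}\V_{N_1}}_{L_x^2}^2\sim\norm{\V}_{\dot H_x^{1/2}}^2\lsm\de_0^2$. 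With these inputs the proof of \eqref{eq:nonl-esti-vnablav} yields \eqref{eq:nonl-esti-vnablav-frequency}.

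The argument is elementary throughout; the only steps that need genuine care are the exact powers of $a$, $|I|$ and $T_0$. In particular one must not bound $t^{-\frac12}$ simply in $L_t^\I$ on $[T_0,+\I)$ (that would destroy the decay), which is why part (2) goes through a dyadic-in-time decomposition, and the ensuing geometric summation is exactly what forces $\ep<\frac16$. In part (3) the point is that the replacements $\nabla\V\mapsto\norm{N\V_N}_{l_N^2}$ and $\V\mapsto\sup_{N_1}|\V_{N_1}|$ cost nothing on the right-hand side, which rests on the $L^2=l^2$ interchange inside the local-smoothing bound and on the almost orthogonality of the pieces $\V_{N_1}$.
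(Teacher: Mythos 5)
Your proof is correct, and parts (1) and the skeleton of (3) follow the paper's own argument (apply Proposition~\ref{prop:nonlinear-estimate} with $\U_1=\V$ or $\W$, choosing the matching alternative). The interesting deviations are in parts (2) and (3).

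For part (2), the paper merely says the time-infinite bounds ``follow by slightly modifying the time integral,'' which amounts to placing $t^{-1/2}$ in $L_t^4$ (resp.\ $L_t^{4/(1+6\ep)}$) over $[T_0,\I)$ rather than in $L_t^\I$, so that $\norm{t^{-1/2}}_{L_t^4([T_0,\I))}\sim T_0^{-1/4}$ and $\norm{t^{-1/2}}_{L_t^{4/(1+6\ep)}([T_0,\I))}\sim T_0^{-1/4+3\ep/2}$ directly replace the bounded-interval factors $a^{-1/2}|I|^{1/4}$ and $a^{-1/2}|I|^{1/4+3\ep/2}$. Your dyadic decomposition $[T_0,\I)=\bigcup_k[2^k,2^{k+1})$ followed by geometric summation is a valid alternative; it is somewhat more roundabout (you first squeeze down to $L_t^\I$ on each piece, then recover decay through summation), but it is fully rigorous — since the time exponents $4/3$ and $4/(3+6\ep)$ exceed $1$, the norm over $[T_0,\I)$ is indeed at most the sum over $k$ of the piece norms — and it makes the requirement $\ep<1/6$ appear explicitly as the convergence threshold of the geometric series. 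Be aware, though, that the dyadic route degrades the local-smoothing factor on each piece to the full constant $\de_0$ and relies entirely on the time decay to sum; the paper's direct modification of the Hölder exponents keeps the bound in one stroke.

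For part (3), the paper passes the weight $|x|^{1/2+3\ep}$ through the maximal function via Lemma~\ref{lem:linfty-littewoodpaley}, i.e.\ $\normb{|x|^{\frac12+3\ep}\sup_{N_1}|\V_{\le N_1}|}_{L_x^{6/(1-6\ep)}}\lsm\normb{|x|^{\frac12+3\ep}\V}_{L_x^{6/(1-6\ep)}}$, which strictly speaking requires a weighted ($A_p$-weight) version of that lemma that is not what is stated in the preliminaries. Your substitute — $\sup_{N_1}|\V_{N_1}|\le\big(\sum_{N_1}|\V_{N_1}|^2\big)^{1/2}$, Minkowski to interchange $L_x^{6/(1-6\ep)}$ and $\ell_{N_1}^2$, radial Sobolev on each radial piece $\V_{N_1}$, and Littlewood--Paley almost-orthogonality in $\dot H_x^{1/2}$ — sidesteps any weighted maximal theory entirely and is, if anything, cleaner. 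Note also that you consistently use $\sup_{N_1}|\V_{N_1}|$ as in the statement, whereas the paper's proof sketch writes $\sup_{N_1}|\V_{\le N_1}|$; the square-function bound you invoke only applies verbatim to the single-block maximal function, which is exactly what the corollary asks for. Your replacement of $\nabla\V$ by the square function $\norm{N\V_N}_{\ell_N^2}$ inside the local-smoothing step, via the $L_{t,x}^2$--$\ell_N^2$ interchange, is identical to the paper's.
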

\begin{proof}
\eqref{eq:nonl-esti-vnablav} and \eqref{eq:nonl-esti-wnablav} follow directly from \eqref{eq:nonl-esti-nabla1} by taking $\U_1=\V$ or $\U_1=\W$. Note that when applying \eqref{eq:nonl-esti-nabla1}, we always choose $L_t^4 L_x^6$-norm if $\U_1=\V$, and choose $L_t^\I \dot H_x^1$-norm if $\U_1=\W$. \eqref{eq:nonl-esti-vnablav-timeinfty} and \eqref{eq:nonl-esti-wnablav-timeinfty} also follow by slightly modifying the time integral when we apply the proof of \eqref{eq:nonl-esti-nabla1}.

\eqref{eq:nonl-esti-vnablav-frequency} follows from the following two observations. The first is that when we replace $\nabla\V$ by $\norm{N\V_N}_{l_N^2}$, then the local smoothing estimate also holds:
\EQ{
\normb{2^{-\frac12j}\wt\chi_j\norm{N\V_N}_{l_N^2}}_{ L_{t,x}^2} \lsm & \normb{2^{-\frac12j}\wt\chi_jN\V_N}_{l_N^2L_{t,x}^2} \lsm \normb{N^{\frac12}P_N\V_+}_{l_N^2L_{x}^2} \lsm \norm{\V_+}_{\dot H_x^{\frac12}}.
}
The related estimate for $\normb{\chi_{\le 0}\norm{N\V_N}_{l_N^2}}_{ L_{t,x}^2}$ also follows similarly. The second is that when we replace $\V$ by $\sup_{N_1\in 2^\Z} |\V_{\le N_1}|$, we can apply Lemma \ref{lem:linfty-littewoodpaley} to obtain
\EQ{
\normb{\sup_{N_1\in 2^\Z} |\V_{\le N_1}|}_{L_t^4 L_x^6} \lsm \norm{\V}_{L_t^4 L_x^6},
}
and
\EQ{
\normb{|x|^{\frac12+3\ep}\sup_{N_1\in 2^\Z} |\V_{\le N_1}|}_{L_x^{\frac{6}{1-6\ep}}} 
\lsm  \normb{|x|^{\frac12+3\ep}\V}_{L_x^{\frac{6}{1-6\ep}}}.
}
Combining the above observations and the proof of \eqref{eq:nonl-esti-vnablav}, we can prove \eqref{eq:nonl-esti-vnablav-frequency}.
\end{proof}

\subsection{Local theory II: energy estimate near the infinity time}\label{sec:localII}
We have given that $\U$ has $\dot H_x^{1/2}$-estimates in the neighborhood of the infinite time. In this subsection, we improve the nonlinear remainder $\W$ to $\dot H_x^1$-level. 
The main result in this subsection is as follows.
\begin{prop}\label{prop:localII}
Let Reduction \ref{assu:main} hold and $T_0$ be given in Proposition  \ref{prop:localI}. Then, we have that $\W\in C([T_0,+\I); \dot H_x^1(\R^3))$ such that
\EQ{
\norm{\W}_{X^1([T_0,+\I))} \lsm N_0^{\frac12},
}
where $X^1$-norm is defined in \eqref{defn:xs}.
\end{prop}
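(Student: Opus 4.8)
The plan is to upgrade the nonlinear remainder $\W$ from $\dot H_x^{1/2}$ to $\dot H_x^1$ on $I:=[T_0,+\I)$ by a contraction/bootstrap argument, feeding on the $\dot H_x^{1/2}$-level bounds of Proposition \ref{prop:localI} (applied to $\W=\U-\V$) and the nonlinear estimates of Section \ref{sec:nonlinear-estimate}. Recall that $\W$ solves the Duhamel equation based at infinity,
\EQ{
\W = S(t)\W_+ - i\int_{+\I}^t S(t-\ta)\brko{\ta^{-\frac12}|\U|\U}\dd\ta, \qquad \U=\V+\W,
}
with $\norm{\W_+}_{\dot H_x^1(\R^3)}\lsm N_0^{1/2}$ by \eqref{eq:finaldata-estimate-w}. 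Since we work with a \emph{full} derivative, we may use the pointwise bound $\absb{\nabla\brko{|\U|\U}}\lsm |\U|\,|\nabla\U|\lsm (|\V|+|\W|)(|\nabla\V|+|\nabla\W|)$; hence by the $U_\De^2$-duality estimates \eqref{eq:strichartz-u2-dual} and \eqref{eq:lorentz-duality}, the square function estimate of Remark \ref{rem:square-function-spacetime}, and the remark after Lemma \ref{lem:strichartz} (which legitimizes Duhamel integrals based at $+\I$ on $I=[T_0,+\I)$),
\EQ{
\norm{\W}_{X^1(I)} \lsm \norm{\W_+}_{\dot H_x^1} + \normb{t^{-\frac12}(|\V|+|\W|)\nabla\V}_{\mathcal N(I)} + \normb{t^{-\frac12}(|\V|+|\W|)\nabla\W}_{\mathcal N(I)},
}
where $\mathcal N(I)$ abbreviates a suitable sum of dual Strichartz norms such as $L_t^{\frac43}L_x^{\frac32}+L_t^{\frac{4}{3+6\ep}}L_x^{\frac{3}{2-3\ep}}$, each conjugate to an $L_x^2$-admissible pair with $q\ne 2$.

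The two families of terms behave very differently. For the terms in which the derivative falls on the linear flow, $t^{-\frac12}\V\nabla\V$ and $t^{-\frac12}\W\nabla\V$, I would simply invoke Corollary \ref{cor:nonlinear-estimate}(2): by \eqref{eq:nonl-esti-vnablav-timeinfty} and \eqref{eq:nonl-esti-wnablav-timeinfty},
\EQ{
\normb{t^{-\frac12}(|\V|+|\W|)\nabla\V}_{\mathcal N(I)} \lsm T_0^{-\frac12}\de_0^2 + T_0^{-\frac14+\frac32\ep}\de_0^2 + T_0^{-\frac14}\de_0\norm{\W}_{L_t^\I\dot H_x^1(I\times\R^3)} + T_0^{-\frac14+\frac32\ep}\de_0\norm{\W}_{L_t^\I\dot H_x^{\frac12}(I\times\R^3)},
}
where the first two summands are absolutely small, the coefficient $T_0^{-1/4}\de_0$ of $\norm{\W}_{L_t^\I\dot H_x^1}\lsm\norm{\W}_{X^1(I)}$ is small, and $\norm{\W}_{L_t^\I\dot H_x^{1/2}}\lsm\norm{\W}_{X^{1/2}(I)}\lsm 1$ is already controlled by Proposition \ref{prop:localI}. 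For the terms in which the derivative falls on $\W$, I would use H\"older's inequality, $\normb{t^{-\frac12}}_{L_t^{2,\I}(I)}\lsm 1$, Sobolev's embedding $\dot H_x^1(\R^3)\hra L_x^6(\R^3)$, and the linear bound \eqref{eq:linear-estimate-v}:
\EQ{
\normb{t^{-\frac12}(|\V|+|\W|)\nabla\W}_{L_t^{\frac43}L_x^{\frac32}(I\times\R^3)} \lsm \normb{t^{-\frac12}}_{L_t^{2,\I}(I)}\brko{\norm{\V}_{L_t^4L_x^6}+\norm{\W}_{L_t^4L_x^6}}\norm{\nabla\W}_{L_t^\I L_x^2} \lsm (\de_0+\de_1)\norm{\W}_{X^1(I)}.
}
The decisive point is the smallness $\norm{\W}_{L_t^4 L_x^6(I\times\R^3)}\lsm\de_0+\de_1$: indeed $\norm{\V}_{L_t^4 L_x^6}\lsm\norm{\V_+}_{\dot H_x^{1/2}}\le\de_0$ since $(4,6)$ is $\dot H_x^{1/2}$-admissible, while $\norm{\U}_{L_t^4 L_x^6(I\times\R^3)}\lsm\normb{|\nabla|^{1/2}\U}_{L_t^{4,2}L_x^3(I\times\R^3)}\lsm\de_1$ by Sobolev's embedding and \eqref{eq:local-h1/2-smallness}, so $\norm{\W}_{L_t^4 L_x^6}\le\norm{\U}_{L_t^4 L_x^6}+\norm{\V}_{L_t^4 L_x^6}\lsm\de_0+\de_1$.

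Combining the three displays gives $\norm{\W}_{X^1(I)}\lsm N_0^{1/2} + \brko{T_0^{-1/4}\de_0+\de_0+\de_1}\norm{\W}_{X^1(I)} + \text{(absolutely small)}$; once $\de_0,\de_1$ are fixed small enough, the last term is absorbed, yielding $\norm{\W}_{X^1(I)}\lsm N_0^{1/2}$, and in particular $\W\in C(I;\dot H_x^1(\R^3))$ through $U_\De^2\hra C_tL_x^2$. To make this rigorous I would run a fixed point for the map $\W\mapsto S(t)\W_+ - i\int_{+\I}^t S(t-\ta)\brko{\ta^{-1/2}|\V+\W|(\V+\W)}\dd\ta$ on the complete metric space $\fbrk{\W:\norm{\W}_{X^{1/2}(I)}\le C_0,\ \norm{\W}_{X^1(I)}\le C_1 N_0^{1/2}}$ with the $X^{1/2}(I)$-metric; the difference estimates are structurally identical to the ones above, and the resulting fixed point coincides with the solution of Proposition \ref{prop:localI} by the uniqueness there. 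The main obstacle is exactly the term $t^{-1/2}(|\V|+|\W|)\nabla\V$, where a full derivative lands on the merely $\dot H_x^{1/2}$-regular (hence $\dot H_x^{-1/2}$-differentiable) linear flow $\V$: controlling it requires recovering half a derivative via the local smoothing effect on each dyadic shell and then summing the shells at the cost of an $\ep$-loss absorbed through the radial Sobolev inequality together with the $\dot H_x^{1/2}$-bound on $\U$ --- but that work is already packaged into Proposition \ref{prop:nonlinear-estimate} and Corollary \ref{cor:nonlinear-estimate}, so the genuine task in this proposition is the bookkeeping that every nonlinear contribution is either absolutely small (thanks to the gains $T_0^{-\al}$) or carries a small prefactor on $\norm{\W}_{X^1(I)}$, with the clean factor $N_0^{1/2}$ surviving from $\norm{\W_+}_{\dot H_x^1}$.
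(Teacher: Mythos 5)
Your high-level strategy — contraction on a ball with both $X^{1/2}$ and $X^1$ control, the $N_0^{1/2}$ factor coming from $\norm{\W_+}_{\dot H_x^1}$, absorption of the small coefficients via the $\de_0,\de_1$-smallness, and the use of Corollary \ref{cor:nonlinear-estimate}(2) for the two $\nabla\V$ terms — coincides with the paper's proof of Proposition \ref{prop:localII} via Lemma \ref{lem:localII}. You also correctly anticipate that the $\nabla\V$ terms are where the local-smoothing and radial-Sobolev machinery is concealed. However, there is a genuine gap in your H\"older estimate for the term $t^{-\frac12}(|\V|+|\W|)\nabla\W$, which is not the easy term you make it out to be.

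The inequality
\EQ{
\normb{t^{-\frac12}(|\V|+|\W|)\nabla\W}_{L_t^{\frac43}L_x^{\frac32}(I\times\R^3)} \lsm \normb{t^{-\frac12}}_{L_t^{2,\I}(I)}\brko{\norm{\V}_{L_t^4L_x^6}+\norm{\W}_{L_t^4L_x^6}}\norm{\nabla\W}_{L_t^\I L_x^2}
}
is false as stated: Lorentz H\"older applied to $t^{-1/2}\in L_t^{2,\I}$, $\V\in L_t^{4,2}$, $\nabla\W\in L_t^{\I,\I}$ produces the product in $L_t^{\frac43,2}L_x^{\frac32}$, and $L_t^{\frac43,2}\supsetneq L_t^{\frac43}$ --- so you do \emph{not} control the plain $L_t^{4/3}$-norm. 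Concretely, a function $g\in L_t^4([T_0,\I))$ need not satisfy $t^{-1/2}g\in L_t^{4/3}([T_0,\I))$; the marginal Young/H\"older check requires $t^{-1/2}\in L_t^2$, which just fails. Moreover, the Lorentz-refined dual Strichartz estimate \eqref{eq:lorentz-duality} accepts $L_t^{q',p}L_x^{r'}$ only for $1\le p<2$, so the weaker bound $L_t^{\frac43,2}L_x^{\frac32}$ is also not admissible. The constraint is structural: once $\nabla\W$ is placed in $L_t^\I L_x^2$ (so $r'=\frac32$, $q'=\frac43$, $q=4$), both the first-index and second-index budgets of the $t^{-1/2}$ factor are exhausted, and the product can never have second Lorentz index strictly below $2$.

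The paper's Lemma \ref{lem:localII} avoids this precisely by \emph{not} putting $\nabla\W$ in $L_t^\I$. It writes
\EQ{
\normb{t^{-\frac12}\U\nabla\W}_{L_t^{\frac43,\frac85}L_x^{\frac32}} \lsm \normb{t^{-\frac12}}_{L_t^{2,\I}}\norm{\U}_{L_t^{8,2}L_x^4}\norm{\nabla\W}_{L_t^{8,8}L_x^{\frac{12}{5}}},
}
using the second $\dot H_x^{1/2}$-level Lorentz Strichartz norm of Proposition \ref{prop:localI} --- namely $\norm{\U}_{L_t^{8,2}L_x^4}\lsm\de_1$, not $\norm{\U}_{L_t^4L_x^6}$ --- and the genuinely $L_t^8$ (so $(8,8)$-Lorentz) Strichartz norm of $\nabla\W$. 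This makes the three second Lorentz indices sum to $0+\frac12+\frac18=\frac58$, i.e., $p=\frac85<2$, which is within the range where \eqref{eq:lorentz-duality} applies. This arrangement (distributing the second Lorentz index into the $\nabla\W$ factor) is the essential device in the local theory; your proposal treats the $\nabla\V$ terms as the hard ones and the $\nabla\W$ term as routine, but in fact the $\nabla\W$ term requires the Lorentz bookkeeping, while the $\nabla\V$ terms are handled by the pre-packaged Corollary \ref{cor:nonlinear-estimate}(2) exactly as you use it.
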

\begin{lem}[Nonlinear estimate]\label{lem:localII}
Let Reduction \ref{assu:main} hold and $T_0$ be given in Proposition  \ref{prop:localI}. Then, for any $T_0'\ge T_0$,
\EQn{\label{eq:localII}
\normb{\int_{+\I}^t S(t-\ta)(\ta^{-\frac{1}{2}}|\U|\U) \dd \ta}_{X^1([T_0',+\I))} \lsm & (\de_1 + \de_0) \norm{\W}_{X^1([T_0',+\I))}  + \de_0.
}
\end{lem}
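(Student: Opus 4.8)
The plan is to estimate the Duhamel term in $X^1$ by combining the duality characterization of $U_\Delta^2$ (Lemma \ref{lem:upvpduality}, in the form of \eqref{eq:lorentz-duality}), a Littlewood--Paley square-function reduction, and the fractional chain rule for separate inputs (Lemma \ref{lem:Frac_chain-modify}). Writing $\U=\V+\W$ throughout and recalling that $\V$ carries only $\dot H^{1/2}$-level information controlled by $\de_0$ through \eqref{eq:linear-estimate-v}, while $\W$ is expected to sit at $\dot H^1$-level, the nonlinearity $|\U|\U$ is quadratic, so every product term has exactly one factor that may be $\nabla$-differentiated and one factor that is only at $\dot H^{1/2}$-level; the derivative will always be placed (via the chain rule) so that it falls on a single input.

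First I would apply \eqref{eq:upvpduality-1}, \eqref{eq:lorentz-duality}, Minkowski's inequality, and the square-function estimate of Lemma \ref{lem:square-function} (as in the proof of Proposition \ref{prop:localI}) to reduce
\EQ{
\normb{\int_{+\I}^t S(t-\ta)(\ta^{-\frac12}|\U|\U)\dd\ta}_{X^1([T_0',+\I))} \lsm \normb{\ta^{-\frac12}\nabla(|\U|\U)}_{L_t^{q'}L_x^{r'}+L_t^{q',p}L_x^{r'}}
}
for suitable $L^2$-admissible exponents with $q\ne2$; concretely I expect the dual Strichartz pair $L_t^{4/3}L_x^{3/2}$ supplemented by the $\ep$-perturbed pair $L_t^{4/(3+6\ep)}L_x^{6/(5-6\ep)}$ to appear, exactly as in Proposition \ref{prop:nonlinear-estimate}. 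Then by Lemma \ref{lem:Frac_chain-modify} with $u=\V+\W$, $G(z)=|z|z$, $H(z)\sim|z|$, and $s=1$, the term $\nabla(|\U|\U)$ splits into $(|\V|+|\W|)\nabla\V$ plus $(|\V|+|\W|)\nabla\W$ (up to harmless constants). The piece $(|\V|+|\W|)\nabla\W$ is the ``good'' one: $\nabla\W$ is estimated in $L_t^\I L_x^2$ by $\norm{\W}_{X^1}$, while $|\V|$ contributes $\de_0$ (via \eqref{eq:linear-estimate-v} and Sobolev) and $|\W|$ contributes a factor that, by Sobolev $\dot H^1\hookrightarrow L_x^6$ and the $\de_1$-smallness \eqref{eq:local-h1/2-smallness} of the $\dot H^{1/2}$-level norms, is $\lsm\de_1$; after absorbing the temporal weight $\ta^{-1/2}$ and using finiteness of the relevant time integrals near $t=+\I$ this yields the $(\de_1+\de_0)\norm{\W}_{X^1}$ contribution.

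The remaining piece $(|\V|+|\W|)\nabla\V$, where the derivative lands on the linear flow, is where the real work is; this is precisely the quantity controlled by \eqref{eq:nonl-esti-vnablav-timeinfty} and \eqref{eq:nonl-esti-wnablav-timeinfty} in Corollary \ref{cor:nonlinear-estimate}. For $\V\nabla\V$ one gets $\lsm T_0^{-1/2}\de_0^2+T_0^{-1/4+\frac32\ep}\de_0^2\lsm\de_0$ directly. For $\W\nabla\V$ one uses \eqref{eq:nonl-esti-wnablav-timeinfty}: it is bounded by $T_0^{-1/4}\de_0\norm{\W}_{L_t^\I\dot H^1}+T_0^{-1/4+\frac32\ep}\de_0\norm{\W}_{L_t^\I\dot H^{1/2}}$, and since $\norm{\W}_{L_t^\I\dot H^1}\lsm\norm{\W}_{X^1}$ and $\norm{\W}_{L_t^\I\dot H^{1/2}}\lsm\norm{\W}_{X^{1/2}}\lsm1$ by Proposition \ref{prop:localI}, this contributes $\lsm\de_0\norm{\W}_{X^1}+\de_0$. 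Collecting all contributions gives the claimed bound $(\de_1+\de_0)\norm{\W}_{X^1([T_0',+\I))}+\de_0$.

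The main obstacle is the term with the derivative on $\V$: naively $\nabla\V$ is only at $\dot H^{-1/2}$-level in spacetime Strichartz norms, so one cannot simply pair it in $L_{t,x}^2$ against an $L_t^\I L_x^2$ factor. The resolution, already packaged in Corollary \ref{cor:nonlinear-estimate}, is to use the local smoothing estimate of Lemma \ref{lem:local-smoothing} on dyadic annuli $|x|\sim2^j$, pay for the $j$-summation with the radial Sobolev inequality of Lemma \ref{lem:radial-sobolev} applied to the companion factor $|x|^{1/2+3\ep}\U_1$ (gaining a $2^{-3\ep j}$ decay), and absorb the polynomial-in-$T_0'$ prefactors against the smallness of $\de_0$; the one subtlety to track carefully is that for the $\W\nabla\V$ term one must invoke Proposition \ref{prop:localI} to know a priori that $\W\in L_t^\I\dot H^{1/2}$ with a bound independent of the $X^1$-norm, so that the estimate stays linear in $\norm{\W}_{X^1}$ and the fixed-point/bootstrap argument can close.
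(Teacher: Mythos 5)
Your proposal has the right skeleton---reduce via \eqref{eq:upvpduality-1} and \eqref{eq:lorentz-duality} to dual Strichartz norms of $\ta^{-1/2}\nabla(|\U|\U)$, split according to where the derivative lands, and handle $\V\nabla\V$ and $\W\nabla\V$ through the two $[T_0',+\I)$-estimates \eqref{eq:nonl-esti-vnablav-timeinfty}, \eqref{eq:nonl-esti-wnablav-timeinfty} of Corollary \ref{cor:nonlinear-estimate}, using $\norm{\W}_{X^{1/2}}\lsm1$ from Proposition \ref{prop:localI} to keep the bound linear in $\norm{\W}_{X^1}$. This part matches the paper (which, incidentally, just uses the pointwise bound $|\nabla(|\U|\U)|\lsm|\U||\nabla\U|$ rather than invoking Lemma \ref{lem:Frac_chain-modify} at $s=1$; that is harmless overkill).

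The gap is in the $\U\nabla\W$ term. You propose to place $\nabla\W$ in $L_t^\I L_x^2\lsm\norm{\W}_{X^1}$ and ``absorb the temporal weight $\ta^{-1/2}$'' using smallness of the companion factor, appealing vaguely to ``finiteness of the relevant time integrals near $t=+\I$''. This does not close on the unbounded interval $[T_0',+\I)$: there $t^{-1/2}$ lies only in the weak space $L_t^{2,\I}$, and H\"older with $\nabla\W\in L_t^\I L_x^2$ and the other factor in a Lorentz--Strichartz norm $L_t^{q,2}$ yields at best $\ta^{-1/2}\U\nabla\W\in L_t^{4/3,p}L_x^{3/2}$ with $p\ge2$, which is \emph{outside} the range $p<2$ that the duality estimate \eqref{eq:lorentz-duality} requires (the range restriction is essential: its proof needs $V_\De^2\hra U_\De^{\min(q,p')}$, which fails at $p=2$). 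The paper's resolution, and the point the authors flag as one of their new ingredients, is to put \emph{both} remaining factors in genuine spacetime Strichartz norms with finite time indices: $\normb{\ta^{-1/2}}_{L_t^{2,\I}}\norm{\U}_{L_t^{8,2}L_x^4}\norm{\nabla\W}_{L_t^{8,8}L_x^{12/5}}$, placing the product in $L_t^{4/3,8/5}L_x^{3/2}$ with Lorentz index $8/5<2$, and reading off the factor $\de_1$ from $\norm{\U}_{L_t^{8,2}L_x^4}\lsm\de_1$ in \eqref{eq:local-h1/2-smallness}. Relatedly, your justification of the $\de_1$-smallness of the $|\W|$ factor ``by Sobolev $\dot H^1\hra L_x^6$'' is not correct: $\norm{\W}_{L_t^\I\dot H_x^1}\sim N_0^{1/2}$ is large, not small. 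The smallness comes directly from the $\wt X$-component $\norm{\U}_{L_t^{8,2}L_x^4}\lsm\de_1$, with no Sobolev step.
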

Now, we prove that Lemma \ref{lem:localII} implies Proposition \ref{prop:localII}:
\begin{proof}[Proof of Proposition \ref{prop:localII}]
Let $T_0$ be given in in Proposition \ref{prop:localI} and $I:=[T_0,+\I)$. 
By \eqref{eq:finaldata-estimate-w}, let $R_1$ be defined by
\EQ{
	\norm{S(t)\W_+}_{L_t^\I \dot H_x^1(\R\times\R^3)} \le C \norm{\W_+}_{\dot H_x^1(\R^3)} \le C N_0^{1-s}=:R_1.
}
Recall the space of solutions used in Proposition \ref{prop:localI}:
\EQ{
B_{R,\de_1} := \fbrk{\U\in C(I;\dot H_x^{\frac12}(\R^3)): \norm{\U}_{X(I)}\le2R\text{, and }\norm{\U}_{\wt X(I)}\le 2\de_1},
}
where $R$ and $\de_1$ are given in Proposition \ref{prop:localI}. Now, we take the resolution space $B_{R_1,R,\de_1}$:
\EQ{
B_{R_1,R,\de_1} := \Big\lbrace\W\in C(I; H_x^1(\R^3)) : & \norm{\W}_{X^1(I)}\le 2R_1, \W+\V\in B_{R,\de_1}\Big\rbrace.
}
Define
\EQ{
\Phi_{\W_+}(\W) := 
S(t)\W_+-i\int_\I^t  \ta^{-\frac12}|(\W+\V)(\ta)|(\W+\V)(\ta)\dd \ta.
}
Note that $\Phi_{\W_+}(\W) + \V=\Psi_{\U_+}(\U)$, where $\Psi_{\U_+}$ is defined in Proposition \ref{prop:localI}:
\EQ{
\Psi_{\U_+}(\U) := S(t)\U_+-i\int_{+\I}^t  \ta^{-\frac12}|\U(\ta)|\U(\ta)\dd \ta.
}
Next, we will prove that $\Phi_{\W_+}$ is a contraction mapping on $B_{R_1,R,\de_1}$. 

First, we prove that $\Phi_{\W_+}$ maps $B_{R_1,R,\de_1}$ into itself. If $\W\in B_{R_1,R,\de_1}$, then $\U=\W+\V\in B_{R,\de_1}$. By Proposition \ref{prop:localI}, $\Psi_{\U_+}(B_{R,\de_1}) \subset B_{R,\de_1}$. Then, $\Phi_{\W_+}(\W) + \V = \Psi_{\U_+}(\U) \in B_{R,\de_1}$. Next, we prove $\norm{\Phi_{\W_+}(\W)}_{L_t^\I \dot H_x^1(I\times\R^3)}\le 2R_1$. By Lemma \ref{lem:localII},
\EQ{
\norm{\Phi_{\W_+}(\W)}_{X^1(I)} \le & C\norm{\W_+}_{\dot H_x^1(\R^3)} + C \normb{\int_t^{+\I} S(t-\ta)(\ta^{-\frac{1}{2}}|\U|\U) \dd \ta}_{X^1(I)} \\
\le & R_1 + C(\de_1 + \de_0) \norm{\W}_{X^1(I)} +\de_0 \\
\le & \frac32R_1 + C(\de_1 +\de_0) R_1,
}
for some $C>1$. Note that $\de_0$ and $\de_1$ are sufficiently small, then
\EQn{\label{esti:localII-w-3d-smallness}
\de_0\le\frac12R_1;\quad C(\de_1 + \de_0) <\frac12.
}
Therefore,
\EQ{
	\norm{\Phi_{\W_+}(\W)}_{L_t^\I \dot H_x^1([T_0,+\I)\times\R^3)} \le & 2R_1.
}
This gives that $\Phi_{\W_+}(B_{R_1,R,\de_1}) \subset B_{R_1,R,\de_1}$.

Next, we prove that $\Phi_{\W_+}$ is a contraction mapping on $B_{R_1,R,\de_1}$. By slightly modifying the argument of Lemma \ref{lem:localII}, for $\W_1,\W_2\subset B_{R_1,R,\de_1}$,
\EQ{
& \normb{\int_t^{+\I} S(t-\ta)\ta^{-\frac{1}{2}}(|\W_1 + \V|(\W_1 + \V) - |\W_2 + \V|(\W_2 + \V)) \dd \ta}_{X^1(I)} \\
\lsm & C(\de_1 + \de_0) \norm{\W_1-\W_2}_{X^1(I)}.
}
Then, by the similar argument as above, we have that
\EQ{
	\norm{\Phi_{\W_+}(\W_1) - \Phi_{\W_+}(\W_2)}_{X^1(I)} \le \frac12 \norm{\W_1-\W_2}_{X^1(I)}.
}
This gives that $\Phi_{\W_+}$ is a contraction mapping on $B_{R_1,R,\de_1}$. 

Now, there exists a unique solution to $\W=\Phi_{\W_+}(\W)$ in $B_{R_1,R,\de_1}$. Note also that for any $\W\in B_{R_1,R,\de_1}$, we have that $\W+\V\in B_{R,\de_1}$, then $\U=\V+\W$ coincides with the local solution derived in Proposition \ref{prop:localI}. This gives Proposition \ref{prop:localII}.
\end{proof}
\begin{proof}[Proof of Lemma \ref{lem:localII}]
In the proof of this lemma, we restrict the variable $(t,x)$ on $[T_0',+\I)\times\R^3$.
By the Strichartz estimate \eqref{eq:lorentz-duality} and the pointwise bound $|\nabla(|\U|\U)| \lsm |\U\nabla\U|$,
\EQnnsub{
\normb{\int_{+\I}^t S(t-\ta) \ta^{-\frac12}|\U(\ta)|\U(\ta)\dd \ta}_{X^1} 
\lsm & \normb{ \nabla( t^{-\frac12}|\U|\U)}_{N}  \nonumber\\
\lsm & \normb{  t^{-\frac12}\U\nabla\W}_{L_t^{\frac43,\frac85} L_x^{\frac32}} \label{eq:localII-nablaw}\\
&+ \normb{t^{-\frac12} \V\nabla\V}_{L_t^{\frac43} L_x^{\frac32} + L_t^{\frac{4}{3+6\ep}}L_x^{\frac{3}{2-3\ep}}} \label{eq:localII-nablav-v}\\
& + \normb{ t^{-\frac12}\W\nabla\V}_{L_t^{\frac43} L_x^{\frac32} + L_t^{\frac{4}{3+6\ep}}L_x^{\frac{3}{2-3\ep}}}, \label{eq:localII-nablav-w}
}
where $N:= L_t^{\frac43,\frac85} L_x^{\frac32} + L_t^{\frac43} L_x^{\frac32} + L_t^{\frac{4}{3+6\ep}}L_x^{\frac{3}{2-3\ep}}$. First, we deal with the term \eqref{eq:localII-nablaw}. By H\"older's inequality \eqref{eq:lorentz-holder},  \eqref{eq:local-h1/2-smallness}, and \eqref{eq:linear-estimate-v},
\EQn{\label{esti:localII-nablaw}
\eqref{eq:localII-nablaw} \lsm & \normb{t^{-\frac12}}_{L_t^{2,\I}} \norm{\U}_{L_t^{8,2} L_x^4} \norm{\nabla \W}_{L_t^{8,8} L_x^{\frac{12}{5}}} \\
\lsm & \de_1 T_0^{-1} \norm{\nabla \W}_{L_t^{8} L_x^{\frac{12}{5}}} \\
\lsm & \de_1 T_0^{-1} \norm{\W}_{X^1}.
}
Note that in the last inequality in \eqref{esti:localII-nablaw}, we use Lemma \ref{lem:square-function}, Minkowski's inequality, and \eqref{eq:strichartz-u2},
\EQ{
\norm{\nabla \W}_{L_t^{8} L_x^{\frac{12}{5}}} \lsm \norm{ N \W_N}_{l_N^2L_t^{8} L_x^{\frac{12}{5}}} \lsm \norm{N\W_N}_{l_N^2U_\De^2} = \norm{\W}_{X^1}.
}
In the following, we will omit this procedure for short. Next, by \eqref{eq:nonl-esti-vnablav-timeinfty}, \eqref{eq:nonl-esti-wnablav-timeinfty}, and \eqref{eq:local-h1/2}, we have
\EQn{\label{esti:localII-nablav-v} 
\eqref{eq:localII-nablav-v}  
\lsm T_0^{-\frac12} \de_0^2 +  T_0^{-\frac14+\frac32\ep} \de_0^2 \lsm T_0^{-\frac14+} \de_0^2,
}
and further using \eqref{eq:linear-estimate-v},
\EQn{\label{esti:localII-nablav-w}
\eqref{eq:localII-nablav-w} \lsm T_0^{-\frac14} \de_0 \norm{\W}_{X^1} + T_0^{-\frac14+\frac32\ep} \de_0.
}
Therefore, noting that $T_0>1$, \eqref{eq:localII} follows by \eqref{esti:localII-nablaw}, \eqref{esti:localII-nablav-v}, and \eqref{esti:localII-nablav-w}.
\end{proof}
Therefore, by Proposition \ref{prop:localII},
\EQn{\label{esti:energy-estimate-t0-h1}
\norm{\W(T_0)}_{\dot H_x^1(\R^3)} \lsm \norm{\W}_{L_t^\I\dot H_x^1([T_0,+\I)\times\R^3)} \lsm \norm{\W}_{X^1([T_0,+\I))} \lsm N_0^{\frac12}.
}
Moreover, by Sobolev's inequality and \eqref{eq:local-h1/2},
\EQn{\label{esti:energy-estimate-t0-l3}
\norm{\W(T_0)}_{L_x^3(\R^3)} \lsm \norm{\W}_{L_t^\I \dot H_x^{\frac12}([T_0,+\I)\times\R^3)} \lsm \norm{\W}_{X^{\frac12}([T_0,+\I))} \lsm 1.
}
Define the pseudo conformal energy:
\EQn{\label{defn:pseudo-conformal-energy}
\E(t) := \frac{1}{4}t^{\frac{1}{2}}\int_{\R^3}|\nabla\W(t,x)|^2 \dx + \frac{1}{3} \int_{\R^3}|\W(t,x)|^{3} \dx.
}
Therefore, by \eqref{esti:energy-estimate-t0-h1} and \eqref{esti:energy-estimate-t0-l3}, we have that:
\begin{cor}\label{cor:energy-estimate-t0}
Let Reduction \ref{assu:main} hold and $T_0$ be given in Proposition  \ref{prop:localI}. Then, 
\EQ{
\E(T_0)\le A N_0,
}
for some constants $A:=A(\norm{\U_+}_{\dot H_x^{\frac12}(\R^3)},T_0)>0$.
\end{cor}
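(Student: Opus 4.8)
The plan is to obtain $\E(T_0)\le AN_0$ by directly substituting into the definition \eqref{defn:pseudo-conformal-energy} the two bounds on $\W(T_0)$ that have just been recorded, namely the $\dot H_x^1$-bound \eqref{esti:energy-estimate-t0-h1} and the $L_x^3$-bound \eqref{esti:energy-estimate-t0-l3}. Writing out
\EQ{
\E(T_0) = \frac14 T_0^{\frac12}\normb{\nabla\W(T_0)}_{L_x^2(\R^3)}^2 + \frac13\norm{\W(T_0)}_{L_x^3(\R^3)}^3,
}
it suffices to control the two summands.

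First I would handle the gradient term: by \eqref{esti:energy-estimate-t0-h1} (which comes from Proposition \ref{prop:localII} and the embedding $X^1([T_0,+\I))\hra L_t^\I\dot H_x^1$) we have $\normb{\nabla\W(T_0)}_{L_x^2}\lsm N_0^{1/2}$, so the first summand is $\lsm T_0^{1/2}N_0$. For the potential term, \eqref{esti:energy-estimate-t0-l3} (which comes from Sobolev's inequality, \eqref{eq:local-h1/2}, and $X^{1/2}([T_0,+\I))\hra L_t^\I\dot H_x^{1/2}$) gives $\norm{\W(T_0)}_{L_x^3}\lsm 1$, so the second summand is $\lsm 1$. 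Adding these and using $N_0>1$ from Reduction \ref{assu:main}, I would conclude $\E(T_0)\lsm T_0^{1/2}N_0+1\lsm_{T_0}N_0$; the implicit constant depends on $T_0$ and, through the constants in Propositions \ref{prop:localI}--\ref{prop:localII}, on $\norm{\U_+}_{\dot H_x^{1/2}}$, and we take $A$ to be this constant.

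There is no real obstacle here: this corollary is purely a bookkeeping step that packages the outputs of the local theory near $t=+\I$ into one bound on the monotone quantity $\E$, and all the analytic work has already been done in Propositions \ref{prop:localI} and \ref{prop:localII}. The only point worth keeping in mind is that the dependence of $A$ on $T_0$ is harmless, since $T_0$ is fixed once $u_0$ (equivalently $\U_+$) is fixed; the subsequent energy bootstrap in Section \ref{sec:energy} will propagate $\E\lsm N_0$ from $t=T_0$ down toward $t=0$, which is the statement that genuinely requires effort.
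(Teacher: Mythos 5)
Your proof is correct and is essentially identical to the paper's: the paper derives the corollary by the same substitution of \eqref{esti:energy-estimate-t0-h1} and \eqref{esti:energy-estimate-t0-l3} into the definition \eqref{defn:pseudo-conformal-energy}, using $N_0>1$ to absorb the constant from the $L^3$ term. The bookkeeping of the dependence of $A$ on $T_0$ and $\norm{\U_+}_{\dot H_x^{1/2}}$ is also exactly as intended.
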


\subsection{Local theory III: extending the solution}\label{sec:localIII}
Now, we are going to extend the solution from $T_0$ to $t_0$, for any fixed $t_0>0$. This will show the global well-posedness.
Next, rewrite the integral equation of $\U$ as
\EQ{
\U = & S(t)\V_+ + S(t)\W_+ - i\int_{+\I}^t  \ta^{-\frac12}|\U(\ta)|\U(\ta)\dd \ta \\
= & S(t)\V_+ +S(t-t_1)\W(t_1)-i\int_{t_1}^t  \ta^{-\frac12}|\U(\ta)|\U(\ta)\dd \ta.
}
Therefore, we consider the Cauchy problem of the equation
\EQn{\label{eq:nls-w-localIII}
\W(t)=S(t-t_1)\W(t_1)-i\int_{t_1}^t  \ta^{-\frac12}|\U(\ta)|\U(\ta)\dd \ta.
}
By modifying the argument of the above lemma, we have that
\begin{prop}\label{prop:localIII}
Let Reduction \ref{assu:main} hold, and $T_0$ be given in Proposition \ref{prop:localI}. Then, for any $t_1\in (t_0,T_0]$, there exists $0<t_2\le\min\fbrk{1,t_0^{10},t_1-t_0}$ that depends on  $\norm{\W(t_1)}_{\dot H_x^{\frac12}\cap \dot H_x^1(\R^3)}$ and $t_0$ such that the equation \eqref{eq:nls-w-localIII} admits a unique solution $\W \in C([t_1-t_2,t_1];\dot H_x^{\frac12}\cap\dot H_x^1(\R^3))$.
\end{prop}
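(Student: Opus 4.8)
The plan is to set up a contraction mapping for the integral equation \eqref{eq:nls-w-localIII} on a short interval $[t_1-t_2,t_1]$ ending at $t_1$, working at the $\dot H_x^{\frac12}\cap\dot H_x^1$-level. Since the ambient solution $\U=\V+\W$ decomposes with $\V=S(t)\V_+$ already controlled by the linear estimates \eqref{eq:linear-estimate-v} and \eqref{eq:linear-estimate-v-radial}, it suffices to solve for $\W$. Write $\Lambda(\W)(t) := S(t-t_1)\W(t_1) - i\int_{t_1}^t \ta^{-\frac12}|\V+\W|(\V+\W)\,\dd\ta$, and take as resolution space the ball $\{\W : \norm{\W}_{X^{1/2}(I)\cap X^1(I)} \le 2C\norm{\W(t_1)}_{\dot H_x^{1/2}\cap\dot H_x^1}\}$ where $I=[t_1-t_2,t_1]$ and the $X^s$-norms are as in \eqref{defn:xs}. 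The homogeneous part is bounded via \eqref{eq:upvpduality-1}, so the whole game is to estimate the Duhamel term with a small constant by choosing $t_2$ small depending only on $\norm{\W(t_1)}_{\dot H_x^{1/2}\cap\dot H_x^1}$ and on $t_0$ (the latter because $\ta^{-1/2}\le (t_1-t_2)^{-1/2}\le (t_0)^{-1/2}$ on $I$, using $t_2\le t_1-t_0$).

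The key steps, in order: (i) reduce to bounding $\normb{\nabla(\ta^{-1/2}|\U|\U)}_{N}$ and $\normb{|\nabla|^{1/2}(\ta^{-1/2}|\U|\U)}_{L_t^{4/3}L_x^{3/2}}$ where $N$ is the dual norm from \eqref{eq:lorentz-duality}, exactly as in the proof of Lemma \ref{lem:localII}; (ii) split $\nabla(|\U|\U)$ into the pieces $\U\nabla\W$, $\V\nabla\V$, and $\W\nabla\V$, and apply the nonlinear estimates of Proposition \ref{prop:nonlinear-estimate} and Corollary \ref{cor:nonlinear-estimate} --- specifically \eqref{eq:nonl-esti-nabla1} with $\U_1=\W$ (using the $L_t^\infty\dot H_x^1$ option), \eqref{eq:nonl-esti-vnablav}, \eqref{eq:nonl-esti-wnablav}, and \eqref{eq:nonl-esti-nabla1/2} --- all of which carry explicit positive powers of $|I|=t_2$ and a factor $a^{-1/2}=(t_1-t_2)^{-1/2}\le t_0^{-1/2}$; (iii) for the $\U\nabla\W$ term, use H\"older in the Lorentz scale as in \eqref{esti:localII-nablaw} together with \eqref{eq:strichartz-u2} to pass $\nabla\W$ back into $X^1$, again gaining a positive power of $t_2$; (iv) collect all estimates to get $\norm{\Lambda(\W)}_{X^{1/2}\cap X^1(I)} \le C\norm{\W(t_1)}_{\dot H_x^{1/2}\cap\dot H_x^1} + C(t_0) t_2^{\theta}(1+\norm{\W}_{X^{1/2}\cap X^1(I)})^2$ for some $\theta>0$, and likewise a contraction estimate for differences; (v) choose $t_2 = t_2(\norm{\W(t_1)}_{\dot H_x^{1/2}\cap\dot H_x^1}, t_0)$ small enough (and $\le\min\{1,t_0^{10},t_1-t_0\}$) that the map is a self-map and contraction on the ball, yielding the unique solution; finally observe $\W\in C(I;\dot H_x^{1/2}\cap\dot H_x^1)$ follows from the $U_\Delta^2$ embedding into $C_tL_x^2$ applied at each dyadic frequency together with the square-function bound.

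The main obstacle --- and it is a mild one given the machinery already in place --- is handling the time interval correctly: unlike the infinite-time estimates of Section \ref{sec:localII}, here the interval $I=[t_1-t_2,t_1]$ is bounded and does \emph{not} contain the origin, so one must (a) track that the weight $\ta^{-1/2}$ is bounded by $t_0^{-1/2}$ throughout, which forces the constraint $t_2\le t_1-t_0$ and produces the dependence on $t_0$; (b) ensure the restriction spaces $U_\Delta^2(I)$ and the duality formula \eqref{eq:upvpduality-2}, \eqref{eq:lorentz-duality} are applied on an interval whose left endpoint is \emph{not} $0$ --- this is legitimate by translation and the restriction-space conventions (cf. Remark 2.23 in \cite{HHK10Poinc} and the remark following Lemma \ref{lem:strichartz}), but needs to be stated; and (c) verify the extra upper bound $t_2\le t_0^{10}$ is compatible, which it is since we are only ever asked to take $t_2$ smaller. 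A secondary technical point is that the nonlinear estimates in Proposition \ref{prop:nonlinear-estimate} are stated for $\U_1$ in mixed spaces; here we always have $\W\in L_t^\infty\dot H_x^{1/2}\cap L_t^\infty\dot H_x^1$ on the compact interval (embedding from $X^{1/2}\cap X^1$), so the hypotheses are met with room to spare, and the positive powers of $|I|$ appearing there are precisely what makes the contraction close for small $t_2$.
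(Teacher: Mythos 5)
Your proposal is essentially the same as the paper's proof: contraction in the ball of radius $2R$ (with $R \sim \|\W(t_1)\|_{\dot H^{1/2}\cap\dot H^1}$) for both $X^{1/2}(I)$ and $X^1(I)$, splitting $\nabla(|\U|\U)$ into $\U\nabla\W$, $\V\nabla\V$, $\W\nabla\V$ and invoking Proposition \ref{prop:nonlinear-estimate}, Corollary \ref{cor:nonlinear-estimate}, with the constraint $t_2\le t_1-t_0$ keeping $\ta^{-1/2}\le t_0^{-1/2}$. The only point worth flagging is that the $\V\nabla\V$ contribution carries no power of $t_2$, so your bound $C(t_0)t_2^{\theta}(1+\norm{\W})^2$ should really be $C(t_0)\de_0^2 + C(t_0)t_2^\theta(\cdots)$; the constant piece is absorbed not by shrinking $t_2$ but by the smallness of $\de_0$ from Reduction \ref{assu:main} (cf.\ \eqref{eq:delta0}), which you use implicitly but do not state.
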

\begin{proof}
Take some $t_2$ that will be defined later such that $0<t_2\le\min\fbrk{1,t_0^{10},t_1-t_0}$. In the proof of this proposition, we denote $I:=[t_1-t_2,t_1]$, and restrict the variable $(t,x)$ on $I\times\R^3$. Define the solution mapping:
\EQ{
\Phi_{\W(t_1)}(\W) := S(t-t_1)\W(t_1)-i\int_{t_1}^t  \ta^{-\frac12}|\U(\ta)|\U(\ta)\dd \ta.
}
Recall the definition of $X^s$ in \eqref{defn:xs}. By \eqref{eq:upvpduality-1},
\EQ{
\norm{S(t-t_1)\W(t_1)}_{X^{\frac12} \cap X^{1}(\R)} \le C\norm{\W(t_1)}_{\dot H_x^{\frac12}(\R^3)} + C\norm{\W(t_1)}_{\dot H_x^1(\R^3)}=:R.
}
Assume without loss of generality that $R\gsm 1$, otherwise the global well-posedness follows easily by small data theory. Take the resolution space as
\EQ{
B_{R} := \fbrk{\U\in C(I; \dot H_x^{\frac12} \cap \dot H_x^{1}(\R^3)): \norm{\U}_{X^{\frac12}(I)}\le 2R,\  \norm{\U}_{X^1(I)}\le 2R}.
}

Now, we prove that $\Phi_{\W(t_1)}$ maps $B_{R}$ into itself. First, we prove the $X^{\frac12}$-bound. Note that $t>t_0>t_2^{\frac{1}{10}}$, by the Strichartz estimates \eqref{eq:upvpduality-1} and \eqref{eq:strichartz-u2-dual}, \eqref{eq:nonl-esti-nabla1/2},  \eqref{eq:linear-estimate-v}, and Young's inequality,
\EQn{\label{esti:prop:localIII-h1/2}
\norm{\Phi_{\W(t_1)}(\W)}_{X^{\frac12}} \le & C\norm{\W(t_1)}_{\dot H_x^{\frac12}} + C\normb{\int_{t_1}^t S(t-\ta)  \ta^{-\frac12}|\U(\ta)|\U(\ta)\dd \ta}_{X^{\frac12}} \\
\le & R +  \normb{t^{-\frac12}|\nabla|^{\frac12}(|\U|\U)}_{L_t^1 L_x^2} \\
\le & R + Ct_0^{-\frac12} t_2^{\frac12}\de_0(\de_0 + \norm{\W}_{X^{\frac12}}) 
+ Ct_0^{-\frac12} t_2^{\frac34} (\de_0 + \norm{\W}_{X^{\frac12}}) \norm{\W}_{X^{1}} \\
\le & R + C t_2^{\frac{9}{20}}(\de_0^2 + \norm{\W}_{X^{\frac12}}^2 + \norm{\W}_{X^{1}}^2).
}
Next, we prove the $X^{1}$-bound.
By the Strichartz estimates \eqref{eq:upvpduality-1} and \eqref{eq:strichartz-u2-dual},
\EQn{\label{eq:localIII-1}
\norm{\Phi_{\W(t_1)}(\W)}_{X^1} \lsm & \norm{\W(t_1)}_{\dot H_x^1} + \normb{\int_{t_1}^t S(t-\ta)  \ta^{-\frac12}|\U(\ta)|\U(\ta)\dd \ta}_{X^1},
}
then it suffices to consider the integral term.
\EQnnsub{
\normb{\int_{t_1}^t S(t-\ta)  \ta^{-\frac12}|\U(\ta)|\U(\ta)\dd \ta}_{X^1} \lsm & \normb{  t^{-\frac12}\U\nabla\W}_{L_t^{1} L_x^{2}} \label{eq:localIII-nablaw}\\
& + \normb{  t^{-\frac12}\V\nabla\V}_{L_t^{\frac43} L_x^{\frac32} + L_t^{\frac{4}{3+6\ep}}L_x^{\frac{3}{2-3\ep}}} \label{eq:localIII-nablav-v}\\
& + \normb{  t^{-\frac12}\W\nabla\V}_{L_t^{\frac43} L_x^{\frac32} + L_t^{\frac{4}{3+6\ep}}L_x^{\frac{3}{2-3\ep}}}. \label{eq:localIII-nablav-w}
}
First, note that $t>t_0>t_2^{\frac{1}{10}}$, H\"older's inequality, \eqref{eq:strichartz-u2}, and \eqref{eq:linear-estimate-v},
\EQn{\label{esti:localIII-nablaw}
\eqref{eq:localIII-nablaw} \lsm & t_0^{-\frac12} \brkb{t_2^{\frac34} \norm{\W}_{L_t^{\I} L_x^{6}} \norm{\nabla \W}_{L_t^{4} L_x^{3}} + t_2^{\frac12} \norm{\V}_{L_t^{4} L_x^{6}} \norm{\nabla \W}_{L_t^{4} L_x^{3}} }  \\
\lsm & t_2^{-\frac1{20}} \brkb{t_2^{\frac34} \norm{\W}_{X^1}^2 + t_2^{\frac12} \de_0 \norm{\W}_{X^1} }  \\
\lsm & t_2^{\frac{7}{10}} \norm{\W}_{X^1}^2 + \de_0 \norm{\W}_{X^1}.
}
For \eqref{eq:localIII-nablav-v}, using \eqref{eq:nonl-esti-vnablav} and $t_2\le 1$, we have that 
\EQn{\label{esti:localIII-nablav-v}
\eqref{eq:localIII-nablav-v} \lsm & t_0^{-\frac12} \de_0^2 + t_0^{-\frac12} t_2^{\frac14+\frac32\ep} \de_0^2 \\
\lsm & t_0^{-\frac12} \de_0^2.
}
For \eqref{eq:localIII-nablav-w}, by \eqref{eq:nonl-esti-wnablav}, we have that
\EQn{\label{esti:localIII-nablav-w}
\eqref{eq:localIII-nablav-w} 
\lsm & t_0^{-\frac12} t_2^{\frac14} \de_0 \norm{\W}_{X^1}  +  t_0^{-\frac12} t_2^{\frac14+\frac32\ep} \de_0 \norm{\W}_{X^{\frac12}} \\
\lsm & t_2^{\frac15} \de_0 (\norm{\W}_{X^{\frac12}} + \norm{\W}_{X^1}).
}
Now, combining \eqref{eq:localIII-1}, \eqref{esti:localIII-nablaw},  \eqref{esti:localIII-nablav-v}, and \eqref{esti:localIII-nablav-w},
\EQn{\label{esti:prop:localIII-h1}
\norm{\Phi_{\W(t_1)}(\W)}_{X^1} \le & R + Ct_2^{\frac{7}{10}} \norm{\W}_{X^1}^2 + C\de_0 \norm{\W}_{X^1} + Ct_0^{-\frac12} \de_0^2 + t_2^{\frac15} \de_0 (\norm{\W}_{X^{\frac12}} + \norm{\W}_{X^1}) \\
\le & R + Ct_2^{\frac{7}{10}} \norm{\W}_{X^1}^2 + C\de_0 \norm{\W}_{X^1} + C\de_0 \norm{\W}_{X^{\frac12}} + Ct_0^{-\frac12}\de_0^2.
}
Now, take $t_2$ such that
\EQ{
t_2:= \min\fbrk{1,t_0^{10},t_1-t_0,\brkb{\frac1{4CR}}^{\frac{20}{9}},\brkb{\frac1{16CR}}^{\frac{10}{7}}},
}
and take sufficiently small $\de_0 = \de_0(t_0,\U_+)$ temporarily such that
\EQn{\label{eq:delta0}
\de_0 \le \brkb{ \frac{t_0^{\frac12}R}{16C}}^{\frac12};\quad C\de_0 \le \frac1{16}.
}
Therefore, by \eqref{esti:prop:localIII-h1/2} and \eqref{esti:prop:localIII-h1},
\EQ{
\norm{\Phi_{\W(t_1)}(\W)}_{X^{\frac12}}
\le 2R;\quad\norm{\Phi_{\W(t_1)}(\W)}_{X^1}
\le 2R.
}
This proves that $\Phi_{\W(t_1)}$ maps $B_{R}$ into itself. By modifying the above argument, we can also prove that $\Phi_{\W(t_1)}$ is a contraction mapping on $B_{R}$, which gives this proposition.
\end{proof}
\subsection{Energy estimate towards the origin}\label{sec:energy}
\begin{prop}[A priori estimate]\label{prop:energy}
Let Reduction \ref{assu:main} hold and $T_0$ be given in Proposition \ref{prop:localI}. Recall that there exists $A>0$ such that
\EQn{\label{eq:energy-assumption}
\E(T_0) \le A N_0,
}
where $\E$ is defined in \eqref{defn:pseudo-conformal-energy}, and $A$ is given in Corollary \ref{cor:energy-estimate-t0}. Take any $t_3\in[t_0,T_0)$ such that $\W\in C([t_3,T_0];\dot H_x^1(\R^3))$. Then, we have
\EQ{
\sup_{t\in[t_3,T_0]}\E(t)\loe 2A N_0.
}
\end{prop}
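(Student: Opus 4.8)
The plan is a backward-in-time bootstrap for the modified energy $\E$ of \eqref{defn:pseudo-conformal-energy}. Set
\EQ{
\Omega:=\fbrk{t_4\in[t_3,T_0]\,:\,\E(t)\le 2AN_0\ \text{ for all }t\in[t_4,T_0]}.
}
Since $\W\in C([t_3,T_0];\dot H_x^{1/2}\cap\dot H_x^1(\R^3))$ by Proposition \ref{prop:localIII}, the map $t\mapsto\E(t)$ is continuous, and $\E(T_0)\le AN_0<2AN_0$ by \eqref{eq:energy-assumption}, so $\Omega$ is closed and contains a left-neighbourhood of $T_0$; it then suffices to show $\Omega$ is open in $[t_3,T_0]$. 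I would do this by proving, for each $t_4\in\Omega$, the strictly better bound $\sup_{[t_4,T_0]}\E\le\frac32 AN_0$. So from now on I fix $t_4\in\Omega$, write $I:=[t_4,T_0]$, assume the bootstrap hypothesis $\sup_I\E\le 2AN_0$, and restrict all spacetime norms to $I\times\R^3$.

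First I differentiate $\E$ using the equation $i\pd_t\W+\frac12\De\W=t^{-1/2}|\U|\U$ with $\U=\V+\W$. After integration by parts the cubic self-interactions of $\W$ cancel between the two parts of $\E$ --- this is exactly why the weight $t^{1/2}$ sits on the gradient part and no weight on the $L^3$ part --- leaving
\EQ{
\frac{\dd}{\dd t}\E(t)=\frac18 t^{-\frac12}\norm{\nabla\W}_{L_x^2}^2-\frac12\im\int_{\R^3}G\,\De\bar\W\dx+t^{-\frac12}\im\int_{\R^3}G\,|\W|\bar\W\dx,
}
where $G:=|\U|\U-|\W|\W$ always carries at least one factor of $\V$. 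Integrating from $t\in I$ to $T_0$ and dropping the first term on the right (it is nonnegative, so discarding it only helps the upper bound),
\EQ{
\E(t)\le\E(T_0)+\int_t^{T_0}\absb{\tfrac12\im\int G\,\De\bar\W\dx}\ds+\int_t^{T_0}\absb{s^{-\frac12}\im\int G\,|\W|\bar\W\dx}\ds.
}
Integrating by parts the $\De\bar\W$-term and using $|\nabla G|\lsm(|\V|+|\W|)(|\nabla\V|+|\nabla\W|)$, the borderline piece is $\int_I\int_{\R^3}\absb{\nabla\W\cdot\nabla\V\,\W}$; every other piece either carries two or three factors of $\V$ (hence a spare power $\de_0^2$ or $\de_0^3$) or is the derivative-free $L^3$-type term, and I expect these to be routine, handled by $\norm{\V}_{L_t^2 L_x^\I}\lsm\de_0$, Sobolev's inequality, and the bootstrap bounds. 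The real work is $\int_I\int\absb{\nabla\W\cdot\nabla\V\,\W}$.

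From the bootstrap, $\norm{\nabla\W}_{L_t^\I L_x^2}\lsm N_0^{1/2}$ and $\norm{\W}_{L_t^\I L_x^3}\lsm N_0^{1/3}$. Feeding these into the Duhamel formula $\W=S(t-T_0)\W(T_0)-i\int_{T_0}^t s^{-1/2}|\U|\U\ds$, using $\norm{\W(T_0)}_{\dot H_x^{1/2}\cap\dot H_x^1}\lsm N_0^{1/2}$ (Propositions \ref{prop:localI} and \ref{prop:localII}), the nonlinear estimates of Proposition \ref{prop:nonlinear-estimate}, and --- to sidestep the a priori uncontrolled $\dot H_x^{1/2}$-norm of $\W$ --- the embedding $\dot H_x^1(\R^3)\hra\dot W_x^{1/2,3}(\R^3)$ on the $\W$-self-interaction, I get $\norm{\W}_{X^{1/2}(I)}\lsm_{t_0,T_0}N_0^{5/6}$ and $\norm{\W}_{X^1(I)}\lsm_{t_0,T_0}N_0$; by the localized local smoothing estimate on $U_\De^2$ (see the remark after Lemma \ref{lem:local-smoothing}) this gives $\sup_j 2^{-j/2}\norm{\chi_j\nabla\W}_{L_{t,x}^2}\lsm_{t_0,T_0}N_0^{5/6}$ and $\sup_j 2^{-j/2}\norm{\chi_j|\nabla|^{3/2}\W}_{L_{t,x}^2}\lsm_{t_0,T_0}N_0$. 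For $\V$: $\sup_j 2^{-j/2}\norm{\chi_j\nabla\V}_{L_{t,x}^2}\lsm\de_0$, and since $\wh\V$ is supported in $\fbrk{|\xi|\gsm N_0}$ also $\normb{|\nabla|^{1/2}\V}_{L_{t,x}^2}\lsm\norm{\V_+}_{L^2}\lsm N_0^{-1/2}\de_0$; finally radial Sobolev (Lemma \ref{lem:radial-sobolev}) interpolated with Gagliardo--Nirenberg (Lemma \ref{lem:GN}) gives $\normb{|x|^{1/2+\ep}\W}_{L_{t,x}^\I}\lsm\norm{\W}_{L_t^\I L_x^3}^{0+}\norm{\nabla\W}_{L_t^\I L_x^2}^{1-}\lsm N_0^{1/2-c\ep}$ for some $c>0$. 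Now write $\int_I\int\absb{\nabla\W\cdot\nabla\V\,\W}=\sum_{j\in\Z}\int_I\int\chi_j\absb{\nabla\W\cdot\nabla\V\,\W}$. For $j\ge1$, placing the weights $2^{\pm j/2}$ on $\nabla\V$ and $2^{-(1/2+\ep)j}|x|^{1/2+\ep}$ on $\W$ bounds each summand by $\lsm_{t_0,T_0}2^{-\ep j}\de_0 N_0^{1-c\ep}$, which sums in $j$; for $j\le1$ I instead integrate by parts so that a full $|\nabla|^{3/2}$ lands on $\W$ and only $|\nabla|^{1/2}$ on $\V$ (treating the commutator $[\chi_j,|\nabla|^{1/2}]$ as an error), bounding each summand by $\lsm_{t_0,T_0}2^{j/2}\cdot N_0\cdot(N_0^{-1/2}\de_0)\cdot N_0^{1/2}=2^{j/2}\de_0 N_0$, which also sums. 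Hence $\int_I\int\absb{\nabla\W\cdot\nabla\V\,\W}\lsm_{t_0,T_0}\de_0 N_0$.

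The main obstacle is precisely this summation over spatial dyadic annuli: local smoothing controls only one annulus at a time, and the non-algebraic critical nonlinearity provides no spare regularity for a clean sum. The two devices above are what make it converge --- for large $|x|$, squeezing a factor $2^{-\ep j}$ out of the radial Sobolev weight at the cost of a harmless $N_0^{-c\ep}$ absorbed by the smallness of $\de_0$; and for small $|x|$, trading the extra derivative on $\W$ against the high-frequency gain $N_0^{-1/2}$ carried by $\V$. Collecting all contributions (the $\de_0^2$, $\de_0^3$ pieces and the $L^3$-type term included), the total error is $\lsm_{t_0,T_0}\de_0 N_0$, so $\E(t)\le AN_0+C(t_0,T_0)\de_0 N_0$ on $I$. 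Choosing $\de_0=\de_0(t_0,u_0)$ small enough that $C(t_0,T_0)\de_0\le\frac12 A$ --- one more of the finitely many smallness requirements defining $\de_0$ --- gives $\sup_I\E\le\frac32 AN_0$, improving the bootstrap. Therefore $\Omega=[t_3,T_0]$ and $\sup_{[t_3,T_0]}\E\le\frac32 AN_0\le 2AN_0$, as claimed.
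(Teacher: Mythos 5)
Your proposal follows the same broad architecture as the paper's proof: a continuity/bootstrap argument on a backward interval, the energy identity obtained by multiplying the equation for $\W$ by $t^{1/2}\wb{\W_t}$, the supporting spacetime bounds $\norm{\W}_{X^{1/2}}\lsm N_0^{5/6}$ and $\norm{\W}_{X^1}\lsm N_0$ under the bootstrap hypothesis, and the two-pronged dyadic strategy for the key interaction $\int_I\int|\nabla\W\cdot\nabla\V\,\W|$ — radial Sobolev with a $2^{-\ep j}$ surplus for $j\ge 1$, derivative shifting combined with the high-frequency gain $\norm{\V_+}_{L^2}\lsm N_0^{-1/2}\de_0$ for $j\le 1$. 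These are exactly the paper's \eqref{observation-2} and \eqref{observation-3}, and you correctly identified the crucial Gagliardo--Nirenberg interpolation that makes the growth $N_0^{1-c\ep}$ rather than $N_0$. So the skeleton is right.

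There are, however, two genuine gaps. First, the claim that ``every other piece either carries two or three factors of $\V$ (hence a spare power $\de_0^2$ or $\de_0^3$) or is the derivative-free $L^3$-type term'' is wrong. After integrating by parts, $|\nabla G\cdot\nabla\wb\W|\lsm (|\V|+|\W|)|\nabla\V||\nabla\W| + |\V||\nabla\W|^2$, and the piece $\int\chi_{\ge 0}|\V||\nabla\W|^2$ carries a \emph{single} factor of $\V$ with no derivative on it. It is handled (easily) not by $\de_0^2$-smallness but by the radial endpoint Strichartz bound $\norm{\V}_{L_t^2 L_x^\I}\lsm\de_0$ (the paper's Error $\text{II}_2$); and its small-$|x|$ companion requires a frequency-localized interpolation between $L_t^\I L_x^2$ and local smoothing to extract a summable factor $N_1^{-\ep/2}$ over the high-low frequency regime (the paper's \eqref{eq:energy-deltaw-w-v-LH-nablaw}). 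Similarly, the boundary term $\int|\nabla\W\cdot\nabla\chi_j|(|\W|+|\V|)|\V|$ is a separate, nontrivial piece. Listing the error terms incompletely is not cosmetic here: these are precisely the terms where the paper deploys distinct machinery.

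Second, and more substantially, your $j\le 1$ step — ``integrate by parts so that a full $|\nabla|^{3/2}$ lands on $\W$ and only $|\nabla|^{1/2}$ on $\V$ (treating the commutator $[\chi_j,|\nabla|^{1/2}]$ as an error)'' — is a gesture, not an argument. In physical space one cannot transfer a half-derivative between factors by integration by parts alone; the transfer is legitimate only when the factor receiving more derivatives has the higher frequency, so a Littlewood--Paley decomposition into low-high/high-low pieces and a Schur-type summation (the paper's Lemma \ref{lem:schurtest}, Error I decomposed into \eqref{eq:energy-deltaw-w-v-LH} and \eqref{eq:energy-deltaw-w-v-HL}) is doing the real work. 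The paper also \emph{avoids} commutators entirely by factoring the cutoff as $\chi_j=\chi_j\wt\chi_j\wtt\chi_j$ and attaching one nested cutoff multiplicatively to each of the three functions before any norm is taken, so that every local smoothing bound and every radial Sobolev bound is applied to a clean cutoff-times-function object. If you insist on keeping $[\chi_j,|\nabla|^{1/2}]$ around, you would need to estimate it and then sum the commutator contributions over $j\le 1$, which is not obviously harmless at the critical regularity. As written, your proposal leaves the heart of the small-$|x|$ summation unjustified.
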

Let $I=[t_3,T_0]$. To prove this  proposition, we implement a bootstrap procedure on $I$: assume an a priori bound 
\EQn{\label{eq:bound-w-hypothesis}
	\sup_{t\in I} \E(t)\loe 2AN_0,
} 
then it suffices to prove that
\EQn{\label{eq:bound-w-conclusion}
	\sup_{t\in I} \E(t)\loe \frac32AN_0.
}
In the proof of this proposition, we omit the dependence on $A$ temporarily before the final step, namely write $C=C(A)$ for short. All the space-time norms are taken over $I\times\R^3$. We also assume without loss of generality that $0< t_0 \le 1$ and $T_0\ge 1$. In the following, we will frequently use $t_3^{\al} \le t_0^{\al}$ for any $\al<0$.

Before starting the proof, we first give some useful spacetime bounds:
\begin{lem}[Various spacetime estimates]\label{lem:spacetime}
All the space-time norms are taken over $I\times\R^3$ where $I:=[t_3,T_0]$. Under the same assumptions as in Proposition \ref{prop:energy}, and letting the bootstrap hypothesis \eqref{eq:bound-w-hypothesis} holds, we have that
\begin{enumerate}
\item ($\dot H_x^1$-estimate)
\EQn{\label{eq:energy-w-h1}
	\norm{\nabla \W}_{L_t^\I L_x^2} \lsm t_0^{-\frac14} N_0^{\frac12}.
}
\item ($L_x^3$-estimate)
\EQn{\label{eq:energy-w-l3}
	\norm{\W}_{L_t^\I L_x^3} \lsm N_0^{\frac13}.
}
\item ($\norm{|x|^{1/2}\cdot}_{L_x^\I}$-estimate)
\EQn{\label{eq:energy-w-linfty}
\normb{|x|^{\frac12}\W}_{L_{t,x}^\I} \lsm t_0^{-\frac14} N_0^{\frac12}.
}
\item ($U_\De^2$-estimate)
For any $\frac12\le s\le 1$,
\EQn{\label{eq:energy-w-xs}
	\norm{\W}_{X^s} \lsm t_0^{-2} T_0^2 N_0^{\frac23+\frac13s},
}
where the $X^s$-norm is defined in \eqref{defn:xs}.
\end{enumerate}
\end{lem}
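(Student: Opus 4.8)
\emph{Proof plan.} The bounds \eqref{eq:energy-w-h1}, \eqref{eq:energy-w-l3} and \eqref{eq:energy-w-linfty} are immediate from the bootstrap hypothesis \eqref{eq:bound-w-hypothesis}. On $I$ one has $t\ge t_3\ge t_0$, so the two summands in \eqref{defn:pseudo-conformal-energy} give $\frac14 t_0^{1/2}\norm{\nabla\W(t)}_{L_x^2}^2\le\E(t)\le 2AN_0$ and $\frac13\norm{\W(t)}_{L_x^3}^3\le\E(t)\le 2AN_0$, which are \eqref{eq:energy-w-h1} and \eqref{eq:energy-w-l3}. For \eqref{eq:energy-w-linfty} I would invoke the radial Sobolev inequality (Lemma \ref{lem:radial-sobolev}) with $d=3$, $s=1$, $p=2$, $q=1$, $\al=0$, $\be=\frac12$ -- a legitimate choice of exponents there -- to obtain $\normb{|x|^{1/2}\W(t)}_{L_x^\I}\lsm\norm{\nabla\W(t)}_{L_x^2}$, and then apply \eqref{eq:energy-w-h1}.

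The substantial assertion is \eqref{eq:energy-w-xs}. Since the $X^s$-norm in \eqref{defn:xs} is a weighted $\ell_N^2$-norm, H\"older's inequality in $N$ (write $N^s=(N^{1/2})^{1-\th}N^{\th}$ with $s=\frac12+\frac\th2$) gives $\norm{\W}_{X^s}\le\norm{\W}_{X^{1/2}}^{1-\th}\norm{\W}_{X^1}^{\th}$; as $\frac56(1-\th)+\th=\frac23+\frac s3$, it suffices to establish the two endpoints $\norm{\W}_{X^{1/2}(I)}\lsm t_0^{-2}T_0^2N_0^{5/6}$ and $\norm{\W}_{X^1(I)}\lsm t_0^{-2}T_0^2N_0$. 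For each I would start from the Duhamel formula on $I=[t_3,T_0]$, namely $\W(t)=S(t-T_0)\W(T_0)-i\int_{T_0}^t\ta^{-1/2}|\U(\ta)|\U(\ta)\,\dd\ta$, and the $U_\De^2$-estimates \eqref{eq:upvpduality-1}, \eqref{eq:strichartz-u2-dual} (and \eqref{eq:lorentz-duality} if convenient), exactly as in the proofs of Propositions \ref{prop:localI} and \ref{prop:localIII}, to reduce to bounding $\norm{\W(T_0)}_{\dot H_x^s}$ and $\normb{|\nabla|^s\brk{t^{-1/2}|\U|\U}}_{N(I)}$ for a suitable dual Strichartz space $N$. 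The data term is handled by interpolating $\norm{\W(T_0)}_{\dot H_x^{1/2}}\lsm 1$ (from \eqref{eq:local-h1/2} and \eqref{eq:finaldata-estimate-v}) with $\norm{\W(T_0)}_{\dot H_x^1}\lsm N_0^{1/2}$ (from \eqref{esti:energy-estimate-t0-h1}), giving $\norm{\W(T_0)}_{\dot H_x^s}\lsm N_0^{s-1/2}\le N_0^{2/3+s/3}$.

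For the nonlinear term I would write $\U=\V+\W$ and, via the separate-input fractional chain rule of Lemma \ref{lem:Frac_chain-modify} (and the pointwise bound $\absb{\nabla(|\U|\U)}\lsm|\U\nabla\U|$ when $s=1$), split $|\nabla|^s(|\U|\U)$ according to whether $|\nabla|^s$ falls on $\V$ or on $\W$. The pieces $t^{-1/2}(|\V|+|\W|)|\nabla|^s\V$ are estimated by the local-smoothing plus radial-Sobolev scheme of Proposition \ref{prop:nonlinear-estimate} and Corollary \ref{cor:nonlinear-estimate}: for $s=1$ these are precisely \eqref{eq:nonl-esti-vnablav} and \eqref{eq:nonl-esti-wnablav}, and for $s=\frac12$ one repeats the argument with $|\nabla|^{1/2}\V$ replacing $\nabla\V$, noting that the local-smoothing bound for $|\nabla|^{1/2}\V$ costs $\norm{\V_+}_{L_x^2}\lsm N_0^{-1/2}$ (since $\V$ is supported in $|\xi|\gsm N_0$), which cancels the $\dot H^{1/2}$-to-$\dot H^{-1/2}$ mismatch; all such terms are bounded by powers of $t_0^{-1}$ and $|I|\le T_0$ times $\de_0\brk{\de_0+\norm{\W}_{L_t^\I\dot H_x^{1/2}}+\norm{\W}_{L_t^\I\dot H_x^1}}$, hence by $t_0^{-2}T_0^2N_0^{1/2}$ after \eqref{eq:energy-w-h1} and the smallness of $\de_0$. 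The remaining piece $t^{-1/2}\U|\nabla|^s\W=t^{-1/2}\V|\nabla|^s\W+t^{-1/2}\W|\nabla|^s\W$ is handled by H\"older in $t$ and $x$: the cross term carries a factor $\de_0$ from $\V$ and is small, while $t^{-1/2}\W|\nabla|^s\W$ is bounded, for $s=1$, by $t_3^{-1/2}|I|^{3/4}\norm{\W}_{L_t^\I L_x^6}\norm{\nabla\W}_{L_t^\I L_x^2}\lsm t_0^{-1}T_0^{3/4}N_0$ (using $\norm{\W}_{L_x^6}\lsm\norm{\nabla\W}_{L_x^2}$), and for $s=\frac12$ by $t_3^{-1/2}|I|^{3/4}\norm{\W}_{L_t^\I L_x^3}\normb{|\nabla|^{1/2}\W}_{L_t^\I L_x^3}\lsm t_0^{-3/4}T_0^{3/4}N_0^{5/6}$ (using the embedding $\dot H_x^1\hra\dot W_x^{1/2,3}(\R^3)$ to get $\normb{|\nabla|^{1/2}\W}_{L_x^3}\lsm\norm{\nabla\W}_{L_x^2}$), in both cases by \eqref{eq:energy-w-h1} and \eqref{eq:energy-w-l3}. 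Collecting all contributions gives the two endpoint bounds, and the interpolation above yields \eqref{eq:energy-w-xs}.

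The only genuinely delicate point in this program is the dyadic spatial summation hidden in the $|\nabla|^s\V$ pieces, which diverges logarithmically; one closes it exactly as in the heuristics \eqref{observation-2} and \eqref{observation-3}, by inserting a small power $|x|^{\ep}$ through radial Sobolev and absorbing the harmless $|I|^{\ep}$. Since the target here is only polynomial in $T_0$ and $t_0^{-1}$ (rather than $N_0$-homogeneous, as will be needed later for the energy increment), this causes no real trouble; the actual work is simply to line up the H\"older exponents so that the two weighted factors produced by local smoothing and radial Sobolev are precisely the quantities controlled by the bootstrap hypothesis through \eqref{eq:energy-w-h1}--\eqref{eq:energy-w-linfty}.
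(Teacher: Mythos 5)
Your treatment of \eqref{eq:energy-w-h1}, \eqref{eq:energy-w-l3}, \eqref{eq:energy-w-linfty} (Sobolev from the definition of $\E$ plus radial Sobolev with $\be=\frac12$, $p=2$, $q=1$, $s=1$) and the overall scheme for \eqref{eq:energy-w-xs} — Duhamel from $T_0$, bound the two endpoints $X^{1/2}$ and $X^1$, then interpolate with $\th=2s-1$ so that $\frac56(1-\th)+\th=\frac23+\frac s3$ — are exactly what the paper does. The $X^1$ endpoint also matches: algebraic pointwise bound $|\nabla(|\U|\U)|\lsm|\U\nabla\U|$, four pieces $\W\nabla\W$, $\V\nabla\W$, $\V\nabla\V$, $\W\nabla\V$, the last two via Corollary~\ref{cor:nonlinear-estimate}.

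The one place you deviate, and where your plan as written does not quite compile, is the $X^{1/2}$ endpoint. You propose to handle $t^{-1/2}|\U|\,|\nabla|^{1/2}\V$ by ``repeating the local-smoothing plus radial-Sobolev scheme of Proposition~\ref{prop:nonlinear-estimate} with $|\nabla|^{1/2}\V$ in place of $\nabla\V$.'' Two problems. First, once you have passed through the fractional chain rule of Lemma~\ref{lem:Frac_chain-modify}, the ``pieces'' are already products of $L^p$ norms, not pointwise products over $\R^3$, so you cannot then insert the spatial cutoffs $\chi_j$ required by Proposition~\ref{prop:nonlinear-estimate}; to mix the two you would need a localized version of the chain rule (this kind of $\chi_j$-vs-$|\nabla|^{s}$ commutator issue is deferred in the paper to Observation~III in the energy estimate, not used here). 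Second, and more importantly, local smoothing is simply unnecessary at $s=\frac12$: $(4,3)$ is $L^2$-admissible, so $\normb{|\nabla|^{1/2}\V}_{L_t^4L_x^3}\lsm\de_0$ is available directly from \eqref{eq:linear-estimate-v}, and the paper's \eqref{eq:nonl-esti-nabla1/2} closes the $X^{1/2}$ bound with ordinary H\"older in $t$, using only the bootstrapped quantities $\norm{\W}_{L_t^\I L_x^3}\lsm N_0^{1/3}$ and $\normb{|\nabla|^{1/2}\W}_{L_t^\I L_x^3}\lsm\norm{\nabla\W}_{L_t^\I L_x^2}\lsm t_0^{-1/4}N_0^{1/2}$, with no $\norm{\W}_{\dot H_x^{1/2}}$ on the right. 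Your detour produces a $\norm{\W}_{L_t^\I\dot H_x^{1/2}}\lsm\norm{\W}_{X^{1/2}}$ on the right-hand side, which then forces an absorption argument (relying on the small factor $\de_0 N_0^{-1/2}$ times $T_0$- and $t_0^{-1}$-powers being less than one, which is only guaranteed by the final choice \eqref{defn:delta0} of $\de_0$) that you neither state nor justify. Swap the local-smoothing detour at $s=\frac12$ for the direct Strichartz bound, as in \eqref{eq:nonl-esti-nabla1/2}, and the rest of your plan goes through cleanly.
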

\begin{proof}
\eqref{eq:energy-w-h1} and \eqref{eq:energy-w-l3} follow directly from \eqref{eq:bound-w-hypothesis}. \eqref{eq:energy-w-h1} together with Lemma \ref{lem:radial-sobolev} imply \eqref{eq:energy-w-linfty}.

Now, we prove \eqref{eq:energy-w-xs}. First, we give the global bound of $\norm{\W}_{X^{1/2}}$. Recall the solution of $\W$ in \eqref{eq:nls-w-localIII} with $t_1=T_0$:
\EQ{
	\W(t)=S(t-T_0)\W(T_0)-i\int_{T_0}^t  \ta^{-\frac12}|\U(\ta)|\U(\ta)\dd \ta.
}
Note that $t_3\ge t_0$. By the Strichartz estimates \eqref{eq:upvpduality-1} and \eqref{eq:strichartz-u2-dual}, Sobolev's inequality, \eqref{eq:nonl-esti-nabla1/2}, \eqref{eq:local-h1/2}, \eqref{eq:energy-w-h1}, and \eqref{eq:energy-w-l3},
\EQn{\label{eq:energy-w-x1/2}
\norm{\W}_{X^{\frac12}} \lsm & \norm{\W(T_0)}_{\dot H_x^{\frac12}} + \normb{t^{-\frac12}|\nabla|^{\frac12}(|\U|\U)}_{L_t^{\frac43} L_x^{\frac32}} \\
\lsm & 1 + t_0^{-\frac12} |I|^{\frac12} \brkb{\de_0 + \norm{\W}_{L_t^\I L_x^3} }  \de_0 \\
& + t_0^{-\frac12}|I|^{\frac34} \brkb{\de_0 +\norm{\W}_{L_t^\I L_x^3}} \normb{|\nabla|^{\frac12}\W}_{L_t^\I L_x^3} \\
\lsm & 1 + t_0^{-\frac12} T_0^{\frac12} (\de_0 + N_0^{\frac13}) \de_0 + t_0^{-1} T_0^{\frac34} (\de_0 + N_0^{\frac13}) N_0^{\frac12} \\
\lsm & t_0^{-1} T_0^{\frac34} N_0^{\frac56}.
}

Next, we give the $X^1$-bound. Similar to \eqref{eq:energy-w-x1/2}, by the Strichartz estimates \eqref{eq:upvpduality-1} and \eqref{eq:strichartz-u2-dual},
\EQn{\label{eq:energy-w-x1-1}
\norm{\W}_{X^{1}} \lsm & \norm{\W(T_0)}_{\dot H_x^{1}} + \normb{t^{-\frac12}\nabla(|\U|\U)}_{L_t^{\frac43} L_x^{\frac32} + L_t^{\frac{4}{3+6\ep}}L_x^{\frac{6}{5-6\ep}}} \\
\lsm & \norm{\W(T_0)}_{\dot H_x^{1}} + t_0^{-\frac12} \norm{\W\nabla\W}_{L_t^{\frac43} L_x^{\frac32}} + t_0^{-\frac12} \norm{\V\nabla\W}_{L_t^{\frac43} L_x^{\frac32}} \\
& +  \normb{t^{-\frac12}\V\nabla\V}_{L_t^{2} L_x^{\frac65} + L_t^{\frac{4}{3+6\ep}}L_x^{\frac{6}{5-6\ep}}} +  \normb{t^{-\frac12}\W\nabla\V}_{L_t^{\frac43} L_x^{\frac32} + L_t^{\frac{4}{3+6\ep}}L_x^{\frac{6}{5-6\ep}}}.
}
By \eqref{eq:energy-w-h1}, \eqref{eq:linear-estimate-v}, and H\"older's and Sobolev's inequalities, we have
\EQn{\label{eq:energy-w-x1-2-1}
\norm{\W\nabla\W}_{L_t^{\frac43} L_x^{\frac32}} \lsm |I|^{\frac34} \norm{\W}_{L_t^\I L_x^{6}} \norm{\nabla\W}_{L_t^\I L_x^2} \lsm t_0^{-\frac12} T_0^{\frac34} N_0,
}
and
\EQn{\label{eq:energy-w-x1-2-2}
\norm{\V\nabla\W}_{L_t^{\frac43} L_x^{\frac32}} \lsm |I|^{\frac12} \norm{\V}_{L_t^4 L_x^6} \norm{\nabla\W}_{L_t^\I L_x^2} \lsm t_0^{-\frac14} \de_0 T_0^{\frac12} N_0^{\frac12}.
}
Using \eqref{eq:nonl-esti-vnablav}, \eqref{eq:nonl-esti-wnablav}, \eqref{eq:energy-w-h1}, and \eqref{eq:energy-w-x1/2}, we have that
\EQn{\label{eq:energy-w-x1-2-3}
\normb{t^{-\frac12}\V\nabla\V}_{L_t^{2} L_x^{\frac65} + L_t^{\frac{4}{3+6\ep}}L_x^{\frac{6}{5-6\ep}}} \lsm t_0^{-\frac12} \de_0^2 + t_0^{-\frac12} T_0^{\frac14+\frac32\ep} \de_0^2 \lsm t_0^{-\frac12} T_0^{\frac12}\de_0^2,
}
and that
\EQn{\label{eq:energy-w-x1-2-4}
\normb{t^{-\frac12}\W\nabla\V}_{L_t^{\frac43} L_x^{\frac32} + L_t^1 L_x^2} \lsm & t_0^{-\frac12} T_0^{\frac14} \de_0 \norm{\W}_{L_t^\I \dot H_x^1}  + t_0^{-\frac12} T_0^{\frac14+\frac32\ep} \de_0 \norm{\W}_{L_t^\I\dot H_x^{\frac12}} \\
\lsm & t_0^{-\frac34} T_0^{\frac14} \de_0 N_0^{\frac12} + T_0^{\frac12} \de_0 \cdot t_0^{-1} T_0^{\frac34} N_0^{\frac56} \\
\lsm & t_0^{-1} T_0^{\frac54}  N_0^{\frac56}.
}
Combining \eqref{eq:energy-w-x1-1}--\eqref{eq:energy-w-x1-2-4}, 
\EQn{\label{eq:energy-w-x1}
\norm{\W}_{X^{1}} \lsm & t_0^{-2} T_0^{2} N_0.
}
Then, \eqref{eq:energy-w-xs} follows by interpolation between \eqref{eq:energy-w-x1/2} and \eqref{eq:energy-w-x1}.
\end{proof}

The spacetime bounds above are very helpful in our analysis. However, their growth in $N_0$ is higher than the $L^\infty_tH^s_x$ estimates.  

\begin{proof}[Proof of Proposition \ref{prop:energy}]
Now, we prove \eqref{eq:bound-w-conclusion} under the assumption \eqref{eq:bound-w-hypothesis}. Recall the equation for $\W$:
\EQ{
	i\pd_t \W + \frac12\De \W = t^{-\frac{1}{2}} |\U|\U.
}
Multiply the equation with $t^{\frac{1}{2}}\wb{\W_t}$, integrate in $x$, and take the real part, then we obtain 
\EQ{
	\frac12t^{\frac {1}2}\re\int_{\R^3} \De \W\wb\W_t \dd x = \re\int_{\R^3} |\U|\U \wb\W_t \dd x.
}
Next, using integration by parts,
\EQ{
	- \frac12t^{\frac {1}2}\re\int_{\R^3} \nabla \W\cdot\nabla\wb\W_t \dd x = \re\int_{\R^3} |\W|\W \wb\W_t \dd x + \re\int_{\R^3} (|\U|\U - |\W|\W) \wb\W_t \dd x,
}
which gives
\EQ{
- \frac14t^{\frac {1}2}\pd_t\brkb{\int_{\R^3} |\nabla \W|^2 \dd x} = \frac{1}{3} \pd_t\brkb{\int_{\R^3} |\W|^{3} \dd x} - \re\int_{\R^3} (|\U|\U - |\W|\W) \wb\W_t \dd x.
}
Thus,
\EQ{
\pd_t\E(t) = & \re\int_{\R^3} (|\U|\U - |\W|\W) \wb\W_t \dd x + \frac18t^{\frac {1}2} \int_{\R^3} |\nabla \W|^2 \dd x \\
\ge & \re\int_{\R^3} (|\U|\U - |\W|\W) \wb\W_t \dd x.
}
Now, we integrate from any $t\in I$ to $T_0$,
\EQ{
	\E(t) \loe \E(T_0) - \re\int_I\int_{\R^3}(|\U|\U - |\W|\W) \wb\W_\ta\dd x\dd \ta.
}
Taking supremum in $t\in I$,
\EQn{\label{eq:energy-1}
\sup_{t\in I}\E(t)\le \E(T_0) + |e|,
}
where the error $e=e(t_0,T_0,N_0)$ is defined by
\EQn{\label{defn:e}
e:= \re\int_{I}\int_{\R^3}(|\U|\U - |\W|\W) \wb\W_t\dd x\dd t.
}
Using $i\pd_t\W=-\De \W + |\U|\U$, we split the error term into three subcases:
\EQnnsub{
|e| \lsm &  \Big|\int_{I}\int_{\R^3} \chi_{\le0}\cdot(|\U|\U - |\W|\W) \De\wb{\W}\dd x\dd t\Big| \tag{Error I}\label{eq:energy-deltaw-w-v}\\
& + \Big|\int_{I}\int_{\R^3}\chi_{\ge0}\cdot(|\U|\U - |\W|\W) \De\wb{\W}\dd x\dd t\Big| \tag{Error II} \label{eq:energy-deltaw-others}\\
& + 
\Big|\int_I\int_{\R^3}|\U|^2|\V|(|\W|+|\V|)\dd x\dd t\Big|. \tag{Error III}\label{eq:energy-potential}
}

$\bullet$ \textbf{Estimate for \eqref{eq:energy-deltaw-w-v}.} We first make frequency decomposition
\EQnnsub{
\eqref{eq:energy-deltaw-w-v} \lsm & \sum_{N_1} \Big|\int_{I}\int_{\R^3} \De\wb{\W}_{N_1} \chi_{\le0}\cdot(|\U|\U-|\W+\V_{\le N_1}|(\W+\V_{\le N_1})) \dd x\dd t\Big| \tag{Error I: LH} \label{eq:energy-deltaw-w-v-LH}\\
& + \sum_{N_1} \Big|\int_{I}\int_{\R^3} \De\wb{\W}_{N_1} \chi_{\le0}\cdot(|\W+\V_{\le N_1}|(\W+\V_{\le N_1})-|\W|\W) \dd x\dd t\Big|. \tag{Error I: HL} \label{eq:energy-deltaw-w-v-HL}
}
For the first term \eqref{eq:energy-deltaw-w-v-LH}, we use the pointwise bound for the difference of nonlinear term $||\U|\U-|\W+\V_{\le N_1}|(\W+\V_{\le N_1})| \lsm |\V_{\ge N_1}| (|\W|+ |\V|+|\V_{\le N_1}|) $ and obtain
\EQnnsub{
\eqref{eq:energy-deltaw-w-v-LH} \lsm & \sum_{N_1} \int_{I}\int_{\R^3} |\De{\W}_{N_1} \chi_{\le0}\V_{\ge N_1}\W| \dd x\dd t  \label{eq:energy-deltaw-w-v-LH-w}\\
& + \sum_{N_1} \int_{I}\int_{\R^3} |\De{\W}_{N_1} \chi_{\le0}\V_{\ge N_1}|(|\V| + |\V_{\le N_1}|) \dd x\dd t. \label{eq:energy-deltaw-w-v-LH-v}
}
By Schur's test in Lemma \ref{lem:schurtest},  H\"older's inequality, and $\chi_{\le 0 } = \sum_{j\le 0} \chi_j = \sum_{j\le 0}\chi_j\wt \chi_j \wtt\chi_j$,
\EQn{\label{eq:energy-deltaw-w-v-LH-w-1} 
\eqref{eq:energy-deltaw-w-v-LH-w} \lsm & \sum_{N_1\le N_2} \int_{I}\int_{\R^3} |\De\W_{N_1} \chi_{\le0}\W\V_{N_2}|\dd x\dd t \\
\lsm &  \int_{I}\int_{\R^3} \sum_{N_1\le N_2} \frac{N_1^{1/2}}{N_2^{1/2}}|N_1^{-\frac12}\De\W_{N_1} \chi_{\le0} \W N_2^{\frac12}\V_{N_2}|\dd x\dd t \\
\lsm & \int_{I}\int_{\R^3} \normb{N_1^{-\frac12}\De\W_{N_1}}_{l_{N_1}^2} \chi_{\le0}|\W| \normb{N_2^{\frac12}\V_{N_2}}_{l_{N_2}^2}\dd x\dd t \\
\lsm & \sum_{j\le 0} 2^{\frac12j} \int_{I}\int_{\R^3} \normb{2^{-\frac12j}\chi_{j}N_1^{-\frac12}\De\W_{N_1}}_{l_{N_1}^2} 2^{\frac12j} \wt\chi_{j}|\W| \normb{2^{-\frac12j}\wtt\chi_{j}N_2^{\frac12}\V_{N_2}}_{l_{N_2}^2}\dd x\dd t \\
\lsm & \sup_{j\le 0}   \normb{2^{-\frac12j}\chi_{j}N_1^{-\frac12}\De\W_{N_1}}_{l_{N_1}^2L_{t,x}^2}  \normb{2^{\frac12j} \wt\chi_{j}\W}_{L_{t,x}^\I} \normb{2^{-\frac12j}\wtt\chi_{j}N_2^{\frac12}\V_{N_2}}_{l_{N_2}^2L_{t,x}^2}.
}
By \eqref{eq:energy-deltaw-w-v-LH-w-1}, Lemma \ref{lem:local-smoothing}, \eqref{eq:energy-w-x1}, \eqref{eq:initialdata-estimate-v}, and $N_2\gsm N_0$,
\EQn{\label{esti:energy-deltaw-w-v-LH-w}
\eqref{eq:energy-deltaw-w-v-LH-w} \lsm & \norm{\W}_{X^1} t_0^{-\frac12} N_0^{\frac12} \norm{P_{N_2}\V_{+}}_{l_{N_2}^2 L_x^2} \\
\lsm & t_0^{-3} T_0^2 N_0  \normb{N_2^{\frac12}P_{N_2}\V_{+}}_{l_{N_2}^2 L_x^2} \\
\lsm & t_0^{-3} T_0^2 \de_0 N_0. 
}
As for \eqref{eq:energy-deltaw-w-v-LH-v}, by H\"older's inequality, Lemma \ref{lem:schurtest}, \eqref{eq:nonl-esti-vnablav-frequency}, and \eqref{eq:energy-w-x1},
\EQn{\label{esti:energy-deltaw-w-v-LH-v}
& \eqref{eq:energy-deltaw-w-v-LH-v} \\
\lsm & \sum_{N_1\le N_2} \int_{I}\int_{\R^3} |\De{\W}_{N_1} \chi_{\le0}\V_{N_2}| (|\V| + |\V_{\le N_1}|) \dd x\dd t \\
\lsm &  \int_{I}\int_{\R^3} \normb{N_1^{-1}\De{\W}_{N_1}}_{l_{N_1}^2} \normb{N_2\V_{N_2}}_{l_{N_2}^2} (|\V| + |\V_{\le N_1}|) \dd x\dd t \\
\lsm & \norm{N_1^{-1}\De{\W}_{N_1}}_{l_{N_1}^2L_t^{4} L_x^{3} \cap l_{N_1}^2L_t^{\frac{4}{1-6\ep}}L_x^{\frac{3}{1+3\ep}}} \normb{\norm{N_2\V_{N_2}}_{l_{N_2}^2} (|\V| + |\V_{\le N_1}|)}_{L_t^{\frac43} L_x^{\frac32} + L_t^{\frac{4}{3+6\ep}}L_x^{\frac{3}{2-3\ep}}} \\
\lsm & \norm{\W}_{X^1}  t_0^{-\frac12} T_0^{\frac14+\frac32\ep} \de_0^2. \\
\lsm & t_0^{-3} T_0^3 \de_0^2 N_0.
}
Then, \eqref{esti:energy-deltaw-w-v-LH-w} and \eqref{esti:energy-deltaw-w-v-LH-v} give that
\EQn{\label{esti:energy-deltaw-w-v-LH}
\eqref{eq:energy-deltaw-w-v-LH} \lsm t_0^{-3} T_0^3 \de_0 N_0.
}

For the second term \eqref{eq:energy-deltaw-w-v-HL}, note that we have the pointwise bound
\EQ{
|\nabla(|\W+\V_{\le N_1}|(\W+\V_{\le N_1})-|\W|\W)| \lsm |\V_{\le N_1}\nabla\W| + |\nabla\V_{\le N_1}|(|\W| + |\V_{\le N_1}|),
}
then we apply the integration by parts and obtain
\EQnnsub{
\eqref{eq:energy-deltaw-w-v-HL} \lsm & \sum_{N_1\ge N_2} \int_{I}\int_{\R^3} |\nabla\W_{N_1}\cdot\nabla\chi_{\le0}\W \V_{N_2}| \dd x\dd t \label{eq:energy-deltaw-w-v-LH-nablachi}\\
& + \sum_{N_1\ge N_2} \int_{I}\int_{\R^3} |\chi_{\le0}\nabla\W_{N_1}\cdot\nabla\W \V_{N_2}| \dd x\dd t \label{eq:energy-deltaw-w-v-LH-nablaw}\\
& + \sum_{N_1\ge N_2} \int_{I}\int_{\R^3} |\chi_{\le0}\nabla\W_{N_1}\cdot\W \nabla\V_{N_2}| \dd x\dd t \label{eq:energy-deltaw-w-v-LH-nablav}\\
& + \sum_{N_1\ge N_2} \int_{I}\int_{\R^3} |\chi_{\le0}\nabla\W_{N_1}\cdot\V_{\le N_1} \nabla\V_{N_2}| \dd x\dd t. \label{eq:energy-deltaw-w-v-LH-nablav-v}
}
For \eqref{eq:energy-deltaw-w-v-LH-nablachi},  by H\"older's inequality and  the support set of $\nabla\chi_{\le 0}$, 
\EQn{\label{esti:energy-deltaw-w-v-LH-nablachi-1}
\eqref{eq:energy-deltaw-w-v-LH-nablachi} 
\lsm & \sum_{j:|j|\le2} \sum_{N_1\ge N_2} \int_{I}\int_{\R^3} |\chi_j\nabla\W_{N_1}\cdot\nabla\chi_{\le0}\W \V_{N_2}| \dd x\dd t \\
\lsm & T_0^{\frac14} \sum_{j:|j|\le2}\sum_{N_1\ge N_2}  \norm{\chi_j\nabla\W_{N_1}}_{L_{t,x}^2} \norm{\nabla\chi_{\le0}}_{L_x^\I} \norm{\W}_{L_t^\I L_x^6} \norm{\V_{N_2}}_{L_t^4 L_x^3}. 
}
By \eqref{eq:energy-w-h1}, Lemma \ref{lem:local-smoothing}, and \eqref{eq:energy-w-x1}, for any $j\in\Z$ with $|j|\le2$,
\EQn{\label{esti:energy-deltaw-w-v-LH-nablachi-2}
\norm{\chi_j\nabla\W_{N_1}}_{L_{t,x}^2} \lsm & T_0^{\frac12-\frac12\ep} \norm{\nabla\W_{N_1}}_{L_t^\I L_{x}^2}^{1-\ep} \normb{2^{-\frac12j}\chi_j\nabla\W_{N_1}}_{L_{t,x}^2}^{\ep} \\
\lsm & T_0^{\frac12-\frac12\ep} (t_0^{-\frac14} N_0^{\frac12})^{1-\ep}  \norm{\W_{N_1}}_{X^{\frac12}}^{\ep} \\
\lsm & N_1^{-\frac12\ep} T_0^{\frac12-\frac12\ep} (t_0^{-\frac14} N_0^{\frac12})^{1-\ep}  \norm{\W_{N_1}}_{X^{1}}^{\ep} \\
\lsm & N_1^{-\frac12\ep} T_0^{\frac12-\frac12\ep} (t_0^{-\frac14} N_0^{\frac12})^{1-\ep} (t_0^{-2} T_0^2 N_0)^\ep \\
\lsm & N_1^{-\frac12\ep} t_0^{-\frac12} T_0 N_0^{\frac12+\frac12\ep}.
}
Noting the fact that $\wh \V$ is supported on $\fbrk{\xi:|\xi|\gsm N_0}$, by \eqref{esti:energy-deltaw-w-v-LH-nablachi-1}, \eqref{esti:energy-deltaw-w-v-LH-nablachi-2}, \eqref{eq:energy-w-h1}, and \eqref{eq:linear-estimate-v},
\EQn{\label{esti:energy-deltaw-w-v-LH-nablachi}
\eqref{eq:energy-deltaw-w-v-LH-nablachi} \lsm & T_0^{\frac14} \sum_{N_1\ge N_2} N_1^{-\frac12\ep} t_0^{-\frac12} T_0 N_0^{\frac12+\frac12\ep} \cdot t_0^{-\frac14} N_0^{\frac12} \cdot N_0^{-\frac12} \normb{N_2^{\frac12}\V_{N_2}}_{L_t^4 L_x^3} \\
\lsm & t_0^{-1} T_0^{2} \de_0 N_0^{\frac12+\frac12\ep} \sum_{N_1\ge N_2} N_1^{-\frac12\ep} \\
\lsm & t_0^{-1} T_0^{2} \de_0 N_0^{\frac12+\frac12\ep} \sum_{N_2:N_2\gsm N_0} N_2^{-\frac12\ep} \\
\lsm & t_0^{-1} T_0^{2} \de_0 N_0^{\frac12}.
}
For \eqref{eq:energy-deltaw-w-v-LH-nablaw}, by H\"older's inequality,
\EQn{\label{eq:energy-deltaw-w-v-LH-nablaw-1}
\eqref{eq:energy-deltaw-w-v-LH-nablaw} 
\lsm & \sum_{j\le 0} \sum_{N_1\ge N_2} \int_{I}\int_{\R^3} |\chi_{j}\nabla\W_{N_1}\cdot\nabla\W \V_{N_2}| \dd x\dd t \\
\lsm & \sum_{j\le 0} \sum_{N_1\ge N_2}  \norm{\chi_j\nabla\W_{N_1}}_{L_t^2 L_x^{\frac{2}{1-\ep}}} \norm{\nabla\W}_{L_{t}^\I L_x^2} \norm{\V_{N_2}}_{L_t^2 L_x^{\frac2\ep}}.
}
By interpolation, \eqref{eq:energy-w-h1}, Lemma \ref{lem:local-smoothing}, and \eqref{eq:energy-w-x1}, for any $j\in\Z$ with $j\le0$,
\EQn{\label{eq:energy-deltaw-w-v-LH-nablaw-2}
\norm{\chi_j\nabla\W_{N_1}}_{L_t^2 L_x^{\frac{2}{1-\ep}}} \lsm & \norm{\chi_j\nabla\W_{N_1}}_{L_{t,x}^2}^{1-\frac32\ep} \norm{\nabla\W}_{L_t^2 L_x^6}^{\frac32\ep} \\
\lsm & 2^{\frac12\ep j} T_0^{\frac12-\frac54\ep} \norm{\nabla\W_{N_1}}_{L_t^\I L_{x}^2}^{1-\frac52\ep} \normb{2^{-\frac12j}\chi_j\nabla\W_{N_1}}_{L_{t,x}^2}^{\ep} \norm{\W}_{X^1}^{\frac32\ep} \\
\lsm & 2^{\frac12\ep j} T_0^{\frac12-\frac54\ep} (t_0^{-\frac14} N_0^{\frac12})^{1-\frac52\ep} N_1^{-\frac12\ep}\norm{\W_{N_1}}_{X^1}^{\ep} \norm{\W}_{X^1}^{\frac32\ep} \\
\lsm & 2^{\frac12\ep j} T_0^{\frac12-\frac54\ep} (t_0^{-\frac14} N_0^{\frac12})^{1-\frac52\ep} N_1^{-\frac12\ep} (t_0^{-2}T_0^2 N_0)^{ \frac52\ep} \\
\lsm & 2^{\frac12\ep j} N_1^{-\frac12\ep} t_0^{-\frac12} T_0 N_0^{\frac12+\frac54\ep}.
}
By \eqref{eq:linear-estimate-v} and the fact that $(2,\frac2\ep)$ is $\dot H_x^{1/2-3\ep/2}$-admissible,
\EQn{\label{eq:energy-deltaw-w-v-LH-nablaw-3}
\norm{\V_{N_2}}_{L_t^2 L_x^{\frac2\ep}} \lsm N_2^{-\frac32\ep} \normb{N_2^{\frac32\ep}\V_{N_2}}_{L_t^2 L_x^{\frac2\ep}} \lsm & N_0^{-\frac32\ep} \de_0.
}
By \eqref{eq:energy-deltaw-w-v-LH-nablaw-1},  \eqref{eq:energy-deltaw-w-v-LH-nablaw-2}, \eqref{eq:energy-deltaw-w-v-LH-nablaw-3}, and \eqref{eq:energy-w-h1},
\EQn{\label{esti:energy-deltaw-w-v-LH-nablaw}
\eqref{eq:energy-deltaw-w-v-LH-nablaw} \lsm & \sum_{j\le0} \sum_{N_1\ge N_2} 2^{\frac12\ep j} N_1^{-\frac12\ep} t_0^{-\frac12} T_0 N_0^{\frac12+\frac54\ep} \cdot t_0^{-\frac14} N_0^{\frac12} \cdot N_0^{-\frac32\ep} \de_0 \\
\lsm & t_0^{-1} T_0 \de_0 N_0^{1-\frac14\ep}  \sum_{N_1\ge N_2}  N_1^{-\frac12\ep}      \\
\lsm & t_0^{-1} T_0 \de_0 N_0^{1-\frac14\ep}  \sum_{N_2:N_2\gsm N_0}  N_2^{-\frac12\ep} \\
\lsm & t_0^{-1} T_0 \de_0 N_0^{1-\frac34\ep}.
}
The treatment for \eqref{eq:energy-deltaw-w-v-LH-nablav} is similar to \eqref{eq:energy-deltaw-w-v-LH}. By Schur's test in Lemma \ref{lem:schurtest}, H\"older's inequality, \eqref{eq:energy-w-linfty}, the local smoothing effect estimate in Lemma \ref{lem:local-smoothing}, and the fact that $N_2\gsm N_0$,
\EQn{\label{esti:energy-deltaw-w-v-LH-nablav}
\eqref{eq:energy-deltaw-w-v-LH-nablav} \lsm & \int_{I}\int_{\R^3} \sum_{N_1\ge N_2} \frac{N_2^{1/2}}{N_1^{1/2}}|N_1^{-\frac12}\nabla\W_{N_1} \chi_{\le0} \W N_2^{-\frac12}\nabla\V_{N_2}|\dd x\dd t \\
\lsm & \sum_{j\le 0} 2^{\frac12j} \int_{I}\int_{\R^3} \normb{2^{-\frac12j}\chi_{j}N_1^{\frac12}\nabla\W_{N_1}}_{l_{N_1}^2} 2^{\frac12j} \wt\chi_{j}|\W| \normb{2^{-\frac12j}\wtt\chi_{j}N_2^{-\frac12}\nabla\V_{N_2}}_{l_{N_2}^2}\dd x\dd t \\
\lsm & \sup_{j\le 0} \normb{2^{-\frac12j}\chi_{j}N_1^{\frac12}\nabla\W_{N_1}}_{l_{N_1}^2L_{t,x}^2}  \normb{2^{\frac12j} \wt\chi_{j}\W}_{L_{t,x}^\I} \normb{2^{-\frac12j}\wtt\chi_{j}N_2^{-\frac12}\nabla\V_{N_2}}_{l_{N_2}^2L_{t,x}^2} \\
\lsm & t_0^{-3} T_0^3 \de_0 N_0. 
}
As for  \eqref{eq:energy-deltaw-w-v-LH-nablav-v}, by H\"older's inequality, Lemma \ref{lem:local-smoothing}, \eqref{eq:linear-estimate-v}, and Lemma \ref{lem:linfty-littewoodpaley},
\EQn{\label{esti:energy-deltaw-w-v-LH-nablav-v}
\eqref{eq:energy-deltaw-w-v-LH-nablav-v} \lsm &  \int_{I}\int_{\R^3} \normb{\chi_{\le 0}N_1^{\frac12}\nabla\W_{N_1}}_{l_{N_1}^2}\sup_{N_1}|\V_{\le N_1}| \normb{N_2^{-\frac12}\nabla\V_{N_2}}_{l_{N_2}^2} \dd x\dd t \\
\lsm & \normb{\chi_{\le 0}N_1^{\frac12}\nabla\W_{N_1}}_{l_{N_1}^2L_{t,x}^2} \normb{\sup_{N_1}|\V_{\le N_1}|}_{L_t^\I L_x^3} \normb{N_2^{-\frac12}\nabla\V_{N_2}}_{l_{N_2}^2L_t^2 L_x^6} \\
\lsm & \norm{\W}_{X^{1}} \de_0^2 \\
\lsm & t_0^{-2} T_0^2 \de_0^2 N_0.
}
Combining \eqref{esti:energy-deltaw-w-v-LH-nablachi}, \eqref{esti:energy-deltaw-w-v-LH-nablaw}, \eqref{esti:energy-deltaw-w-v-LH-nablav}, and \eqref{esti:energy-deltaw-w-v-LH-nablav-v}, we have that
\EQn{\label{esti:energy-deltaw-w-v-HL}
\eqref{eq:energy-deltaw-w-v-HL} \lsm t_0^{-2} T_0^2 \de_0 N_0.
}
Now, \eqref{esti:energy-deltaw-w-v-LH} and \eqref{esti:energy-deltaw-w-v-HL} imply that
\EQn{\label{esti:energy-deltaw-w-v}
\eqref{eq:energy-deltaw-w-v} \lsm t_0^{-3} T_0^3 \de_0 N_0. 
}

$\bullet$ \textbf{Estimate for \eqref{eq:energy-deltaw-others}.} Integrating by parts, we have
\EQnnsub{
\eqref{eq:energy-deltaw-others} \lsm & \int_{I}\int_{\R^3}|\nabla\W \cdot \nabla\chi_{\ge0}  \V| (|\W| +|\V|) \dd x\dd t \tag{Error $\text{II}_1$}\label{eq:energy-deltaw-others-nablachi}\\
& + \int_{I}\int_{\R^3}\chi_{\ge0}|\nabla\W \cdot \nabla\W \V|\dd x\dd t \tag{Error $\text{II}_2$}\label{eq:energy-deltaw-others-nablaw}\\
& + \int_{I}\int_{\R^3}\chi_{\ge0}|\nabla\W \cdot \nabla\V \W|\dd x\dd t \tag{Error $\text{II}_3$}\label{eq:energy-deltaw-others-nablav-w}\\
& + \int_{I}\int_{\R^3}\chi_{\ge0}|\nabla\W \cdot \nabla\V \V|\dd x\dd t. \tag{Error $\text{II}_4$}\label{eq:energy-deltaw-others-nablav-v}
}
For \eqref{eq:energy-deltaw-others-nablachi}, by H\"older's inequality, \eqref{eq:energy-w-h1}, \eqref{eq:energy-w-l3}, and \eqref{eq:linear-estimate-v},
\EQn{\label{esti:energy-deltaw-others-nablachi}
\eqref{eq:energy-deltaw-others-nablachi} \lsm & T_0^{\frac12} \norm{\nabla\W}_{L_t^\I L_x^2} \norm{\nabla\chi_{\ge0}}_{L_x^\I} \norm{\V}_{L_t^2 L_x^6} (\norm{\W}_{L_t^\I L_x^3} + \norm{\V}_{L_t^\I L_x^3}) \\
\lsm & T_0^{\frac12} t_0^{-\frac14} N_0^{\frac12} \de_0 (N_0^{\frac13} + \de_0) \\
\lsm & t_0^{-\frac14} T_0^{\frac12} \de_0 N_0^{\frac56}.
}
For \eqref{eq:energy-deltaw-others-nablaw}, by H\"older's inequality, \eqref{eq:energy-w-h1}, and \eqref{eq:linear-estimate-v-radial},
\EQn{\label{esti:energy-deltaw-others-nablaw}
\eqref{eq:energy-deltaw-others-nablaw} \lsm T_0^{\frac12} \norm{\nabla\W}_{L_t^\I L_x^2}^2 \norm{\V}_{L_t^2 L_x^\I} \lsm \de_0 t_0^{-\frac12} T_0^{\frac12}N_0.
}
For \eqref{eq:energy-deltaw-others-nablav-w}, by the identity decomposition $\chi_{\ge1}=\sum_{j\ge1}\chi_j=\sum_{j\ge1}\wtt\chi_j\wt \chi_j \chi_j$ and H\"older's inequality,
\EQn{\label{eq:energy-deltaw-others-nablav-w-1}
\eqref{eq:energy-deltaw-others-nablav-w} \lsm & \sum_{j= 1}^{+\I} \int_I\int_{\R^3} |\chi_j\nabla\W\cdot \wt\chi_j\nabla\V||\wtt\chi_j\W|\dd x\dd t \\
\lsm & T_0^{\frac12} \sum_{j=J}^{+\I} 2^{-\ep j} \norm{\chi_j\nabla\W}_{L_t^\I L_x^2} \normb{2^{-\frac12j} \wt\chi_j\nabla\V}_{L_{t,x}^2} \normb{2^{(\frac12+\ep)j}\wtt\chi_j\W}_{L_{t,x}^\I}.
}
By the radial Sobolev's inequality in Lemma \ref{lem:radial-sobolev},
\EQ{
	\normb{2^{(\frac12+\ep)j}\wtt\chi_j\W}_{L_{x}^\I} \lsm \normb{|\nabla|^{1-2\ep}\W}_{L_{x}^{\frac{6}{3-2\ep}}}.
}
By Gagliardo-Nirenberg's inequality in Lemma \ref{lem:GN},
\EQ{
	\normb{|\nabla|^{1-2\ep}\W}_{L_{x}^{\frac{6}{3-2\ep}}} \lsm \norm{\W}_{L_x^3}^{2\ep} \norm{\W}_{\dot H_x^1}^{1-2\ep}.
}
Combining \eqref{eq:energy-deltaw-others-nablav-w-1} and the above two inequalities,
\EQn{\label{eq:energy-deltaw-others-nablav-w-2}
\eqref{eq:energy-deltaw-others-nablav-w} \lsm &  T_0^{\frac12}  \de_0 \norm{\nabla\W}_{L_t^\I L_x^2} \norm{\W}_{L_t^\I L_x^3}^{2\ep} \norm{\W}_{L_t^\I \dot H_x^{1}}^{1-2\ep}.
}
Then, by \eqref{eq:energy-deltaw-others-nablav-w-2}, \eqref{eq:energy-w-h1}, and \eqref{eq:energy-w-x1/2},
\EQn{\label{esti:energy-deltaw-others-nablav-w}
\eqref{eq:energy-deltaw-others-nablav-w} \lsm  T_0^{\frac12} \de_0 (N_0^{\frac13})^{2\ep} (t_0^{-\frac14}N_0^{\frac12})^{2-2\ep}  \lsm  t_0^{-1} T_0^{\frac12} \de_0 N_0^{1-\frac13\ep}.
}
For \eqref{eq:energy-deltaw-others-nablav-v}, by H\"older's inequality, \eqref{eq:nonl-esti-vnablav}, and \eqref{eq:energy-w-x1},
\EQn{\label{esti:energy-deltaw-others-nablav-v}
\eqref{eq:energy-deltaw-others-nablav-v} \lsm &  \int_{I}\int_{\R^3} |\nabla{\W} \nabla\V \V| \dd x\dd t \\
\lsm & \norm{\nabla \W}_{L_t^{4} L_x^{3} \cap L_t^{\frac{4}{1-6\ep}}L_x^{\frac{3}{1+3\ep}}} \normb{\nabla\V \V}_{L_t^{\frac43} L_x^{\frac32} + L_t^{\frac{4}{3+6\ep}}L_x^{\frac{3}{2-3\ep}}} \\
\lsm & \norm{\W}_{X^1}  t_0^{-\frac12} T_0^{\frac14+\frac32\ep} \de_0^2. \\
\lsm & t_0^{-3} T_0^3 \de_0^2 N_0.
}
Combining  \eqref{esti:energy-deltaw-others-nablachi}, \eqref{esti:energy-deltaw-others-nablaw}, \eqref{esti:energy-deltaw-others-nablav-w}, and \eqref{esti:energy-deltaw-others-nablav-v},
\EQn{\label{esti:energy-deltaw-others}
\eqref{eq:energy-deltaw-others} \lsm & t_0^{-3} T_0^3 \de_0 N_0.
}

$\bullet$ \textbf{Estimate for \eqref{eq:energy-potential}.} 
By H\"older's inequality, \eqref{eq:energy-w-l3}, \eqref{eq:linear-estimate-v}, and \eqref{eq:linear-estimate-v-radial},
\EQn{\label{esti:energy-potential}
	\eqref{eq:energy-potential} \lsm & \int_{t_0}^{T_0}\int_{\R^3}|\V|(|\W|^3+|\V|^3)\dd x\dd t \\
	\lsm & \norm{\V}_{L_t^1 L_x^\I} \norm{\W}_{L_t^\I L_x^3}^3 +  \norm{\V}_{L_{t,x}^4}^4 \\
	\lsm & T_0^{\frac12} \norm{\V}_{L_t^2 L_x^\I} N_0 + T_0^{\frac12} \norm{\V}_{L_t^8L_x^4}^4 \\
	\lsm & \de_0 T_0^{\frac12} N_0 +\de_0^4 T_0^{\frac12}.
}

Finally, combining \eqref{eq:energy-assumption}, \eqref{eq:energy-1}, \eqref{esti:energy-deltaw-w-v}, \eqref{esti:energy-deltaw-others}, and \eqref{esti:energy-potential},
\EQ{
\sup_{t\in I}\E(t)\le AN_0 + C(A) \de_0  t_0^{-3} T_0^{3} N_0.
}
Then, \eqref{eq:bound-w-conclusion} follows by taking $\de_0=\de_0(A,T_0,t_0)$ as
\EQn{\label{defn:delta0}
\de_0 := \min\fbrkbb{\eta,\brkb{ \frac{t_0^{\frac12}}{8C}}^{\frac12},\brkb{\frac{At_0^{3}}{2C(A)T_0^{3}}}^{\frac13}},
}
where $\eta>0$ denotes some sufficiently small global constant, and the constant $\brkb{ \frac{t_0^{1/2}}{8C}}^{1/2}$ appears in \eqref{eq:delta0} above. Note that $A=A(\norm{\U_+}_{\dot H_x^{1/2}(\R^3)})$, $T_0=T_0(\de_1,\U_+)$, and $\de_1>0$ denotes some sufficiently small absolute constant appeared in Proposition \ref{prop:localI}, then the above choice of $\de_0$ is well-defined, and $\de_0$ depends on $t_0$ and $u_0$.
\end{proof}
\subsection{Global well-posedness}\label{sec:gwp}
Now, we give the proof of global well-posedness by standard extension argument. 
\begin{prop}[Global well-posedness]\label{prop:gwp}
Let Reduction \ref{assu:main} hold, $\de_1$ and $T_0$ be given in Proposition \ref{prop:localI}, $A$ be given in Corollary \ref{cor:energy-estimate-t0}. Then, the equation \eqref{eq:nls-pc} admits a unique solution 
\EQ{
\U(t)\in S(t)\V_+ + C([t_0,\I); \dot H_x^{\frac12} \cap \dot H_x^1(\R^3)),
}
satisfying
\EQ{
\lim_{t\ra+\I} \norm{\U-S(t)\U_+}_{\dot H_x^{1/2}(\R^3)} = 0.
}
Moreover, we have
\EQn{\label{eq:energy}
\sup_{t\in[t_0,T_0]} \E(t) \le C(t_0,u_0).
}
\end{prop}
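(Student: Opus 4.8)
The plan is to glue the three local theories to the a priori energy bound by a continuity argument in the time variable, propagating the solution backward from $t=+\I$ down to the arbitrarily fixed $t_0>0$. Since $\U=\V+\W$ with $\V=S(t)\V_+$ as fixed in Section~\ref{sec:highlow}, constructing $\U$ on $[t_0,\I)$ in the asserted class is the same as constructing $\W\in C([t_0,\I);\dot H_x^{1/2}\cap\dot H_x^1(\R^3))$ solving \eqref{eq:nls-w-localIII}. First I would record that Propositions~\ref{prop:localI} and \ref{prop:localII} already provide $T_0=T_0(\de_1,\U_+)>1$ together with $\W\in C([T_0,\I);\dot H_x^{1/2}\cap\dot H_x^1(\R^3))$ and the scattering-type smallness, and that Corollary~\ref{cor:energy-estimate-t0} gives $\E(T_0)\le AN_0$.

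Then I would introduce
\[
\Lambda:=\bigl\{\tau\in[t_0,T_0]:\ \W\ \text{extends to}\ C([\tau,T_0];\dot H_x^{1/2}\cap\dot H_x^1)\ \text{and}\ \textstyle\sup_{t\in[\tau,T_0]}\E(t)\le 2AN_0\bigr\},
\]
where ``extends'' means that $\W$ solves \eqref{eq:nls-w-localIII} on $[\tau,T_0]$. By restriction $\Lambda$ is an interval with right endpoint $T_0$, and $T_0\in\Lambda$; put $T_*:=\inf\Lambda$. The goal is to prove $t_0\in\Lambda$. This yields the global solution $\U=\V+\W$ on $[t_0,\I)$, and then Proposition~\ref{prop:energy} with $t_3=t_0$ gives $\sup_{t\in[t_0,T_0]}\E(t)\le 2AN_0$, which is \eqref{eq:energy} because $A=A(\normb{\U_+}_{\dot H_x^{1/2}},T_0)$, $T_0=T_0(\de_1,\U_+)$ with $\de_1$ an absolute constant, and $N_0=N_0(\de_0,u_0)$ with $\de_0=\de_0(t_0,u_0)$ fixed in \eqref{defn:delta0}, so $2AN_0=C(t_0,u_0)$.

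To prove $t_0\in\Lambda$ I would argue by contradiction: otherwise either $T_*>t_0$, or $T_*=t_0\notin\Lambda$. For every $\tau\in\Lambda$ with $\tau>t_0$ the bootstrap hypothesis \eqref{eq:bound-w-hypothesis} holds on $[\tau,T_0]$, so Lemma~\ref{lem:spacetime} together with \eqref{eq:energy-w-h1} gives $\norm{\W}_{L_t^\I(\dot H_x^{1/2}\cap\dot H_x^1)([\tau,T_0])}\le M$ for some fixed $M=M(t_0,T_0,N_0,A)$ independent of $\tau$; by uniqueness the solutions on the various $[\tau,T_0]$ patch to one on $(T_*,T_0]$ staying bounded by $M$ in $\dot H_x^{1/2}\cap\dot H_x^1$. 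For $t_1\in(T_*,T_0]$, Proposition~\ref{prop:localIII} applied at $t_1$ to data of size $\le M$ extends $\W$ to $[t_1-t_2,t_1]$ with $t_2\ge\min\fbrk{t_1-t_0,\rho}$, where $\rho=\rho(M,t_0)>0$ is the profile-independent existence time furnished there. Choosing $t_1$ with $t_1-T_*<\rho$ one checks $t_1-t_2<T_*$ when $T_*>t_0$, and $t_1-t_2=t_0$ once $t_1$ is taken close enough to $t_0$ when $T_*=t_0$. On $[t_1-t_2,T_0]$ we then have $\W\in C(\dot H_x^1)$, so Proposition~\ref{prop:energy} with $t_3=t_1-t_2$ reinstates $\sup_{[t_1-t_2,T_0]}\E\le 2AN_0$; hence $t_1-t_2\in\Lambda$, which contradicts $T_*=\inf\Lambda$ in the first case and exhibits $t_0\in\Lambda$ in the second. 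Uniqueness of $\U$ in the stated class then follows from the local uniqueness in Propositions~\ref{prop:localI} and \ref{prop:localIII}. The one delicate point is exactly this patching step: a priori the extension time could shrink to zero as $t_1\downarrow t_0$, and the reason it does not is structural — Proposition~\ref{prop:localIII} gives an existence time depending only on the $\dot H_x^{1/2}\cap\dot H_x^1$-norm of the data (the equation being $\dot H_x^1$-subcritical there), while the energy bound propagated by Proposition~\ref{prop:energy}, via Lemma~\ref{lem:spacetime}, keeps that norm uniformly bounded on $(T_*,T_0]$. So the genuine analysis is already in Sections~\ref{sec:localIII}--\ref{sec:energy}; what is left here is the bookkeeping just described.

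Finally, for the scattering assertion I would start from the Duhamel representation $\U(t)-S(t)\U_+=-i\int_{+\I}^t S(t-\ta)\bigl(\ta^{-1/2}|\U|\U\bigr)\dd\ta$ and use the dual estimate \eqref{eq:lorentz-duality} on $[t,\I)$ together with the nonlinear bound \eqref{esti:localI-X-2}, obtaining
\[
\normb{\U(t)-S(t)\U_+}_{\dot H_x^{1/2}(\R^3)}\lsm\normb{\ta^{-1/2}|\nabla|^{1/2}(|\U|\U)}_{L_t^1L_x^2([t,\I)\times\R^3)}\longrightarrow 0\quad(t\to+\I)
\]
by dominated convergence, since the same norm over $[T_0,\I)$ is finite. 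This completes the proof of Proposition~\ref{prop:gwp}.
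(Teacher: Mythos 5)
Your proposal is correct and follows essentially the same route as the paper: extend backwards from $T_0$ by repeatedly applying Proposition~\ref{prop:localIII} with an existence time that is uniform because the a priori bound of Proposition~\ref{prop:energy} and the $X^{1/2}$-estimate of Lemma~\ref{lem:spacetime} keep $\norm{\W(t)}_{\dot H_x^{1/2}\cap\dot H_x^1}$ uniformly controlled on $[t_0,T_0]$. The only differences are cosmetic — you package the extension as a continuity argument with $T_*=\inf\Lambda$ rather than the paper's explicit finite induction with fixed step length $t_2''$, and you spell out the scattering limit $\U(t)-S(t)\U_+\to 0$ in $\dot H_x^{1/2}$ via the tail of the Duhamel integral, which the paper treats as already implicit in Propositions~\ref{prop:localI} and \ref{prop:localII}.
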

\begin{proof}
In the view of Propositions \ref{prop:localI} and \ref{prop:localII}, it suffices to extending the solution backwards from $T_0$ to $t_0$. By Corollary \ref{cor:energy-estimate-t0},
\EQ{
\E(T_0) \le A N_0 \le 2A N_0,
}
then
\EQ{
\norm{\W(T_0)}_{\dot H_x^1(\R^3)} \le \brkb{8A T_0^{-\frac12} N_0}^{\frac12} \le \brkb{8A t_0^{-\frac12} N_0}^{\frac12}=:E_0.
}
By \eqref{eq:local-h1/2}, we also have $\norm{\W(t_0)}_{\dot H^{1/2}} \le C$. Recall the equation for $\W$ in \eqref{eq:nls-w-localIII}:
\EQ{
\W(t)= & S(t-t_1)\W(t_1)-i\int_{t_1}^t  \ta^{-\frac12}|\U(\ta)|\U(\ta)\dd \ta.
}
Applying Proposition \ref{prop:localIII} for \eqref{eq:nls-w-localIII} with $t_1=T_0$, there exists $t_2'=t_2'(E_0,t_0)$ such that \eqref{eq:nls-w-localIII} admits a unique solution $\W\in C([T_0-t_2',T_0];\dot H_x^{1/2}\cap\dot H_x^1(\R^3))$. Next, applying Proposition \ref{prop:energy}, we have that
\EQ{
\sup_{t\in [T_0-t_2',T_0]}\E(t) \le 2A N_0.
}
Particularly, we also have that
\EQ{
\norm{\W(T_0-t_2')}_{\dot H_x^1(\R^3)} \le \brkb{8A (T_0-t_2')^{-\frac12} N_0}^{\frac12} \le \brkb{8A t_0^{-\frac12} N_0}^{\frac12} = E_0.
}
Furthermore, applying 
\eqref{eq:energy-w-x1/2} on $I=[T_0-t_2',T_0]$ with starting time $T_0$ and combining the above energy bound,
\EQn{\label{eq:gwp-x1/2-firstinterval}
\norm{\W}_{X^{\frac12}} &\lsm  \norm{\W(T_0)}_{\dot H_x^{\frac12}} + \normb{t^{-\frac12}|\nabla|^{\frac12}(|\U|\U)}_{L_t^{\frac43} L_x^{\frac32}} \\
&\lsm  1 + t_0^{-\frac12} |I|^{\frac12} \brkb{\de_0 + \norm{\W}_{L_t^\I L_x^3} }  \de_0 \\
& + t_0^{-\frac12}|I|^{\frac34} \brkb{\de_0 +\norm{\W}_{L_t^\I L_x^3}} \normb{|\nabla|^{\frac12}\W}_{L_t^\I L_x^3} \\
&\lsm_A  1 + t_0^{-\frac12} T_0^{\frac12} (\de_0 + N_0^{\frac13}) \de_0 + t_0^{-1} T_0^{\frac34} (\de_0 + N_0^{\frac13}) N_0^{\frac12} \\
&\lsm_A  t_0^{-1} T_0^{\frac34} N_0^{\frac56} =:E_1,
}
where we use the upper bound $|I|\le T_0$. Particularly, $\norm{\W(T_0-t_2')}_{\dot H_x^{1/2}} \le E_1$. Then, we can continue applying Proposition \ref{prop:localIII} starting from $T_0-t_2'$ and extending the solution on $[T_0-t_2'-t_2'',T_0-t_2']$ with some $t_2''=t_2''(t_0,E_0,E_1)$. (We overlook the technical treatment in the case when $T_0-t_2'-t_2''<t_0$.)

On this interval, we can see the energy bound is still that $\norm{\W(T_0-t_2')}_{\dot H_x^1(\R^3)} \le E_0$ due to the a priori bound in Proposition \ref{prop:energy}. Moreover, the $\dot H_x^{1/2}$-estimate is also global: in  \eqref{eq:gwp-x1/2-firstinterval}, we modify the interval as $I=[T_0-t_2'-t_2'',T_0]$ with starting time $T_0$, then the implicit constant in the estimate does not rely on $t_2'$ and $t_2''$. Therefore, we can still derive that
\EQ{
\norm{\W(T_0-t_2'-t_2'')}_{\dot H_x^{1/2}} \le E_1.
} 

Now, we are able to extending the solution to $t_0$ by induction, and the length of interval in every step is $t_2''$, which is independent of the starting time of every step. Then, we can obtain $\W\in C([t_0,T_0];\dot H_x^{1/2}\cap\dot H_x^1(\R^3))$.

Finally, since $A=A(\norm{\U_+}_{\dot H_x^{1/2}(\R^3)})$, $T_0=T_0(\de_1,\U_+)$, and $\de_1>0$ denotes some sufficiently small absolute constant in Proposition \ref{prop:localI}, $\de_0=\de_0(t_0,u_0)$ by \eqref{defn:delta0}, and $N_0=N_0(\de_0)$, then the upper bound of $\E(t)$, namely $2AN_0$, depends on $t_0$ and $u_0$.
\end{proof}

\subsection{Proof of Theorem \ref{thm:3dquadratic}}\label{sec:maintheorem}
It suffices to consider the forward-in-time case, namely for any $T>0$, $e^{-\frac12it\De}u \in u_0 + C([0,T];\F \dot H_x^{1/2} \cap \F \dot H_x^{1}(\R^3))$. Then, this follows by applying the inverse pseudo conformal energy transform to the result in Proposition \ref{prop:gwp} under the Reduction \ref{assu:main} with $t_0=1/T$.

\appendix

\vspace{2cm}

\section{Proof of the local well-posedness away from the origin}

\vspace{0.5cm}

Now, we give the proof of Proposition \ref{thm:lwp}. We only consider the $t_0>0$  case. Here, the spacetime variable $(t,x)$ is restricted on $[t_0-T,t_0+T]\times\R^3$, for some $0<T<t_0$ that will be defined later. Let $I:=[t_0-T,t_0+T]$. Recall that $S(-t_0)u(t_0) \in \F\dot H_x^s(\R^3)$, and the integral equation reads
\EQ{
u(t) = S(t-t_0)u(t_0) - i \int_{t_0}^t S(t-\ta)(|u(\ta)|u(\ta)) \dd \ta.
}
Apply the vector field, then
\EQ{
|J(t)|^su(t) = |J(t)|^sS(t-t_0)u(t_0) - i \int_{t_0}^t S(t-\ta)|J(\ta)|^s(|u(\ta)|u(\ta)) \dd \ta.
}
First, by the definition of the fractional vector field and the Strichartz estimate \eqref{eq:strichartz-1}, for any $L^2$-admissible pair $(q,r)$,
\EQn{\label{app:linear}
\norm{|J(t)|^sS(t-t_0)u(t_0)}_{L_t^q L_x^r} \lsm \norm{S(t)|x|^sS(-t_0)u(t_0)}_{L_t^q L_x^r} \lsm \norm{S(-t_0)u(t_0)}_{\F\dot H_x^s}.
}
For the integral term, let $v=M(-t)u$, where $M(t):=e^{\frac{i|x|^2}{2t}}$, then by \eqref{eq:strichartz-2}, H\"older's inequality, and $|J(t)|^s=M(t)|t\nabla|M(-t)$,
\EQn{\label{app:nonlinear-1}
\normb{\int_{t_0}^t S(t-\ta)|J(\ta)|^s(|u(\ta)|u(\ta)) \dd \ta}_{L_t^q L_x^r} \lsm & \norm{|J(t)|^s(|u|u)}_{L_t^1 L_x^2} \\
\sim & \norm{|t\nabla|^s(|v|v)}_{L_t^1 L_x^2} \\
\lsm & \norm{v}_{L_t^{\frac43}L_x^6} \norm{|t\nabla|^sv}_{L_t^4 L_x^3} \\
\lsm & \norm{v}_{L_t^{\frac43}L_x^6} \norm{|J(t)|^su}_{L_t^4 L_x^3}.
}
Now, we temporarily take $T$ such that
\EQ{
0<T\le \min\fbrk{\frac{t_0}{2},1},
}
then $t\sim t_0$ on $t\in I$. Therefore, by Sobolev's inequality in $x$ and H\"older's inequality in $t$,
\EQn{\label{app:nonlinear-2}
\norm{v}_{L_t^{\frac43}L_x^6} \lsm t_0^{-s} \norm{|t\nabla|^sv}_{L_t^{\frac43}L_x^{\frac{6}{1+2s}}} 
\lsm t_0^{-s} T^{\frac{1+2s}{4}} \norm{|J(t)|^su}_{L_t^{\frac{2}{1-s}}L_x^{\frac{6}{1+2s}}}.
}
By \eqref{app:nonlinear-1} and \eqref{app:nonlinear-2}, we have that
\EQn{\label{app:nonlinear}
&\normb{\int_{t_0}^t S(t-\ta)|J(\ta)|^s(|u(\ta)|u(\ta)) \dd \ta}_{L_t^q L_x^r} \\
\lsm & t_0^{-s} T^{\frac{1+2s}{4}} \norm{|J(t)|^su}_{L_t^{\frac{2}{1-s}}L_x^{\frac{6}{1+2s}}} \norm{|J(t)|^su}_{L_t^4 L_x^3}.
}
Note that $(\frac{2}{1-s},\frac{6}{1+2s})$ and $(4,3)$ are $L^2$-admissible, and $t_0$ is a fixed constant. Now, in the view of \eqref{app:linear} and \eqref{app:nonlinear}, take $T$ sufficiently small depending on $\norm{S(-t_0)u(t_0)}_{\F\dot H_x^s}$, then the local well-posedness holds by standard contraction-mapping argument in the resolution space
\EQ{
\fbrk{|J(t)|^su\in C(I;L_x^2(\R^3)) : \norm{|J(t)|^su}_{L_t^\I L_x^2\cap L_t^{\frac{2}{1-s}}L_x^{\frac{6}{1+2s}}\cap L_t^4 L_x^3(I\times\R^3)} \le 2R},
}
where $R:=C\norm{S(-t_0)u(t_0)}_{\F\dot H_x^s(\R^3)}$. This completes the proof of Proposition \ref{thm:lwp}.

\end{document}